\newtheorem{theorem}{Theorem}
\theoremstyle{plain}
\newtheorem{acknowledgement}{Acknowledgement}
\newtheorem{definition}{Definition}
\newtheorem{lemma}{Lemma}
\newtheorem{proposition}{Proposition}
\newtheorem{remark}{Remark}
\DeclareMathOperator{\Div}{div}
\numberwithin{equation}{section}
 \numberwithin{theorem}{section}
 \numberwithin{proposition}{section}
 \numberwithin{remark}{section}
 \numberwithin{definition}{section}
 \numberwithin{lemma}{section}
 \numberwithin{corollary}{section}
 \numberwithin{example}{section}
 \numberwithin{claim}{section}
\begin{document}
\title[Homogenization of stochastic wave equations]{Sigma-convergence of
semilinear stochastic wave equations}
\author{Gabriel Deugoue}
\address{G. Deugoue, Department of Mathematics and Computer Science,
University of Dschang, P.O. Box 67, Dschang, Cameroon}
\email{agdeugoue@yahoo.fr}
\author{Jean Louis Woukeng}
\address{J. L. Woukeng, Department of Mathematics and Computer Science,
University of Dschang, P.O. Box 67, Dschang, Cameroon}
\curraddr{J. L. Woukeng, Interdisciplinary Center for Scientific Computing
(IWR), University of Heidelberg, Im Neuenheimer Feld 205, 69120 Heidelberg,
Germany}
\email{jwoukeng@yahoo.fr}
\date{April, 2017}
\subjclass[2000]{35B40, 60H15, 46J10}
\keywords{Semilinear stochastic wave equation, homogenization, algebras with
mean value, sigma-convergence}

\begin{abstract}
We address the homogenization of a semilinear hyperbolic stochastic partial
differential equation with highly oscillating coefficients, in the context
of ergodic algebras with mean value. To achieve our goal, we use a suitable
variant of the sigma-convergence concept that takes into account both the
random and deterministic behaviours of the phenomenon modelled by the
underlying problem. We also provide an appropriate scheme for the
approximation of the effective coefficients. To illustrate our approach, we
work out some concrete problems such as the periodic homogenization problem,
the almost periodic and the asymptotically almost periodic ones.
\end{abstract}

\maketitle

\section{Introduction and the main results\label{sec1}}

The need for taking random fluctuations into account in the study of complex
systems and physical phenomena resulting from the modeling to predictions is
now widely recognized by scientific community. Wave propagation described by
hyperbolic partial differential equations is one of the typical physical
phenomena widely observed in the nature, and has been studied over the years
and continue to attract the attention of scientists aiming at understanding
some physical phenomena such as sonic booms and bottleneck in traffic flows.
However due to the presence of turbulence, the more realistic way to model
and capture physical features of natural phenomena at large scale is to
introduce stochastic models. Stochastic partial differential equations
(SPDEs) are the most convenient mathematical models arising from modeling of
complex systems undergoing random influences.

Our aim in the current work is to analyze such a model represented by a
semilinear stochastic wave equation that can be used to study some problems
in nonlinear optics or the ones related to wave motion through the ocean or
the atmosphere. To this end, the problem we address is stated as follows.

Let $Q$ be a Lipschitz domain of $\mathbb{R}^{N}$ and $T$ a positive real
number. By $Q_{T}$ we denote the cylinder $Q\times(0,T)$. Let $\left(
\Omega, \mathcal{F},\{\mathcal{F}_{t}\}_{t\ge0},\mathbb{P},
\{W^{k}\}_{k\ge1}\right) $ a stochastic basis, that is a filtered
probability space with $\{W^{k}\}_{k\ge1}\}$ a sequence of independent
standard one dimensional Brownian motions relative to $\mathcal{F}_{t}$. Fix
a separable Hilbert space $\mathcal{U}$ with an associated orthonormal basis 
$(e_{k})_{k\ge1}$. We may define a cylindrical Wiener process by setting $%
W=\sum_{k=1}^{\infty}W^{k}e_{k}$ (see \cite{DaPrato}). By $L_{2}(\mathcal{U}%
, X)$ we denote the space of Hilbert-Schmidt operators from $\mathcal{U}$ to
a Hilbert space $X$. We also define the auxiliary space $\mathcal{U}%
_{0}\supset\mathcal{U}$ via $\mathcal{U}_{0}=\{v=\sum_{k\ge1}%
\alpha_{k}e_{k}:\sum_{k\ge1}\alpha_{k}^{2}k^{-2}<\infty\}$, endowed with the
norm $|v|^{2}_{\mathcal{U}_{0}}=\sum_{k=1}^{\infty}\alpha_{k}^{2}k^{-2}$,
for $v=\sum_{k\ge1}\alpha_{k}e_{k}$. It is a well known fact that there
exists $\Omega^{\prime}\in\mathcal{F}$ with $\mathbb{P}(\Omega^{\prime})=1$
such that $W(\omega)\in C(0,T;\mathcal{U}_{0})$ for any $\omega\in\Omega^{%
\prime}$ (see \cite{DaPrato}).

We consider the following semilinear stochastic hyperbolic initial value
problem 
\begin{equation}
\begin{cases}
du_{\varepsilon}^{\prime}-\Div\left( A_{0}^{\varepsilon}(x)\nabla
u_{\varepsilon}\right) dt=f(\frac{x}{\varepsilon},\frac{t}{\varepsilon }%
,u_{\varepsilon})dt+g(\frac{x}{\varepsilon},\frac{t}{\varepsilon }%
,u_{\varepsilon})dW\text{ in }Q_{T}, \\ 
u_{\varepsilon}=0\text{ on }\partial Q\times(0,T), \\ 
u_{\varepsilon}(x,0)=u^{0}(x)\text{ and }u_{\varepsilon}^{%
\prime}(x,0)=u^{1}(x)\text{ in }Q,%
\end{cases}
\label{1}
\end{equation}
where $\varepsilon>0$ is sufficiently small and $u_{\varepsilon}^{\prime}$
is the time derivative of $u_{\varepsilon}$ $(u_{\varepsilon}^{\prime}=\frac{%
\partial u_{\varepsilon}}{\partial t})$. We assume that the coefficients of (%
\ref{1}) are constrained as follows:

\textbf{(A1)~Uniform ellipticity}. \text{The function} $A_{0}^{\varepsilon}$ 
\text{defined by} $A_{0}^{\varepsilon}(x)=A_{0}(x,x/\varepsilon)$ \text{%
satisfies} $A_{0}\in\mathcal{C(}\bar{Q};L^{\infty}\left( \mathbb{R}%
_{y}^{N}\right) )^{N\times N}$ and is a $N\times N$ symmetric matrix
satisfying the following assumptions 
\begin{equation}
A_{0}\eta\cdot\eta\geq\alpha|\eta|^{2}
\end{equation}
for all $\eta\in\mathbb{R}^{N}$ and a.e. in $\mathbb{R}^{N}$, where $\alpha>0
$ is a given constant not depending on $x,t,y$ and $\eta$.\newline

\textbf{(A2)~Lipschitz continuity of }$f$. The function $f:(y,\tau
,\lambda)\mapsto f(y,\tau,\lambda)$ from $\mathbb{R}^{N}\times\mathbb{R}%
\times\mathbb{R}$ into $\mathbb{R}$ satisfies the properties:

\begin{enumerate}
\item[(i)] $f$ is measurable,

\item[(ii)] $f(y,\tau,0)=0$ for a.e. $y$ and $\tau$,

\item[(iii)] there exists a constant $c_{1}>0$ such that 
\begin{equation*}
\left\vert f(y,\tau,\lambda)-f(y,\tau,\mu)\right\vert \leq c_{1}\left\vert
\lambda-\mu\right\vert 
\end{equation*}
for almost $y,\tau$, and for all $\lambda,\mu\in\mathbb{R}$.
\end{enumerate}

From (ii) and (iii) above, we infer that

\begin{enumerate}
\item[(iv)] there exists a constant $c_{2}>0$ such that $\left\vert
f(y,\tau,\lambda)\right\vert \leq c_{2}\left( 1+\left\vert
\lambda\right\vert \right) $ for almost $y,\tau$, and for all $\lambda\in%
\mathbb{R}$.
\end{enumerate}

\textbf{(A3) Lipschitz continuity of }$g$. The function $g:(y,\tau,u
)\mapsto g(y,\tau,u )$ from $\mathbb{R}^{N}\times\mathbb{R}\times L^{2}(Q)$
into $L_{2}(\mathcal{U}, L^{2}(Q))$ satisfies:

\begin{enumerate}
\item[(i)] $g$ is measurable,

\item[(ii)] $g(y,\tau, 0)=0$ for a.e. in $y$ and $\tau$,

\item[(iii)] there exists a constant $c_{3}>0$ such that \newline
$\left\vert g(y,\tau,u)-g(y,\tau,v )\right\vert _{L_{2}(\mathcal{U},
L^{2}(Q))} \leq c_{3}\left\vert u -v \right\vert _{L^{2}(Q)} $ for a.e. in $%
y,\tau$, and for all $u ,v \in L^{2}(Q)$.
\end{enumerate}

Also as above, from (ii) and (iii) above, we infer that

\begin{enumerate}
\item[(iv)] there exists a constant $c_{4}>0$ such that $\left\vert
g(y,\tau, u )\right\vert _{L_{2}(\mathcal{U}, L^{2}(Q))} \leq c_{4}\left(
1+\left\vert u \right\vert _{L^{2}(Q)} \right) $ for a.e. in $y,\tau$ and
for all $u \in L^{2}(Q)$.
\end{enumerate}

In the following, we introduce the notion of probabilistic strong solution
for our problem (\ref{1}).

\begin{definition}
A probabilistic strong solution of the problem \emph{(\ref{1})} is a
stochastic process $u_{\varepsilon}$ such that:

\begin{enumerate}
\item[1)] $u_{\varepsilon}$, $u_{\varepsilon}^{\prime}$ are $\mathcal{F}_{t}$%
-measurable,

\item[2)] $u_{\varepsilon}\in L^{2}\left( \Omega,\mathcal{F},P;L^{\infty
}\left( 0,T;H_{0}^{1}(Q)\right) \right) $, and $u_{\varepsilon}^{\prime}\in
L^{2}\left( \Omega,\mathcal{F},P;L^{\infty}\left( 0,T;L^{2}(Q)\right)
\right) $,

\item[3)] $u_{\varepsilon}$ satisfies 
\begin{align*}
\left( u_{\varepsilon}^{\prime}(t),\phi\right) +\int_{0}^{t}\left(
A_{0}^{\varepsilon}\nabla u_{\varepsilon}(\tau),\nabla\phi\right) d\tau &
=(u^{1},\phi)+\int_{0}^{t}\left( f\left( \frac{x}{\varepsilon},\frac{\tau }{%
\varepsilon},u_{\varepsilon}(\tau)\right) ,\phi\right) d\tau \\
& +\sum_{k=1}^{\infty}\int_{0}^{t}\left( g\left( \frac{x}{\varepsilon},\frac{%
\tau}{\varepsilon},u_{\varepsilon}(\tau)\right) e_{k},\phi\right)
dW^{k}(\tau),
\end{align*}
for all $\phi\in\mathcal{C}_{0}^{\infty}(Q)$ and for a.e. $t\in(0,T)$.

\item[4)] $u_{\varepsilon}(0)=u^{0}$.
\end{enumerate}
\end{definition}

With this in mind, under conditions (A1)-(A3) and provided that $u^{0}\in
H_{0}^{1}(Q)$ and $u^{1}\in L^{2}(Q)$, the problem (\ref{1}) has a unique
strong solution $u_{\varepsilon}\in L^{2}(\Omega;\mathcal{C}%
([0,T];H_{0}^{1}(Q)))$ with $u_{\varepsilon}^{\prime}\in L^{2}(\Omega;%
\mathcal{C}([0,T];L^{2}(Q)))$. This existence and uniqueness result has been
achieved in \cite[Theorem 8.4, p. 189]{Chow}.

To simplify the notations, we set 
\begin{equation}
g^{\varepsilon}(.,.,u_{\varepsilon})(x,t)=g\left( \frac{x}{\varepsilon},%
\frac{t}{\varepsilon},u_{\varepsilon}(x,t)\right) \text{ and }%
g_{k}^{\varepsilon}(.,.,u_{\varepsilon})=g^{\varepsilon}(.,.,u_{\varepsilon
})e_{k}\text{ for }k\geq1.  \notag
\end{equation}
The question in this work is to determine the limit as $\varepsilon
\rightarrow0$, of the sequence of processes $u_{\varepsilon}$ under suitable
assumptions on the coefficients of (\ref{1}) (see assumption (\textbf{A4})
below). For a fixed probability space representing the random fluctuations
space, we shall assume that the coefficients of (\ref{1}) have various
deterministic behaviours ranging from the periodicity to the weak almost
periodicity. Our study therefore falls within the scope of the \textit{%
sigma-convergence for stochastic processes} that is a generalization of the
so-called sigma-convergence concept introduced in 2003 in \cite{Hom1}. One
of the chief merits of this work lies in the fact that we do not make use of
the concept of the spectrum of an algebra with mean value (viewed as a $%
C^{\ast}$-algebra as always considered before), thereby addressing one of
the main concerns of Applied Scientists whom will therefore be able to use
results arising from the use of sigma-convergence concept in homogenization
theory. Indeed the corrector problem (see (\ref{5.5'})) is in that case,
posed on the numerical space $\mathbb{R}^{N}$, instead of the abstract space
representing by the spectrum of the underlying algebra with mean value as it
was always the case in all the previous work dealing with the
sigma-convergence concept. In the same direction we refer to the preprint 
\cite{JW2015}, which is the first work in which we have initiated the
deterministic homogenization of PDEs without appealing to the spectrum of an
algebra with mean value.

Let us clearly state our main results here, in order to fix ideas.

Let $A$ be an algebra with mean value on $\mathbb{R}^{d}$ (integer $d\geq1$%
), that is, a closed subalgebra of the $\mathcal{C}^{\ast}$-algebra of
bounded uniformly continuous real-valued functions on $\mathbb{R}^{d}$,
which contains the constants, is translation invariant and is such that any
of its elements $u$ possesses a mean value $M(u)$ defined by 
\begin{equation*}
M(u)=\lim_{R\rightarrow\infty}\frac{1}{\left\vert B_{R}\right\vert }%
\int_{B_{R}}u(y)dy 
\end{equation*}
where $B_{R}=B(0,R)$ is the open ball in $\mathbb{R}^{d}$ centered at the
origin and of radius $R$. We denote by $B_{A}^{p}(\mathbb{R}^{d})$ ($1\leq
p<\infty$) the completion of $A$ with respect to the seminorm 
\begin{equation*}
\left\Vert u\right\Vert _{p}=(M(\left\vert u\right\vert ^{p}))^{\frac{1}{p}%
}=\left( \underset{R\rightarrow\infty}{\lim\sup}\frac{1}{\left\vert
B_{R}\right\vert }\int_{B_{R}}\left\vert u(y)\right\vert ^{p}dy\right) ^{%
\frac{1}{p}}. 
\end{equation*}
Before we may proceed forward, let us note that if $A=\mathcal{C}_{per}(Y)$,
the algebra of continuous $Y$-periodic functions on $\mathbb{R}^{d}$ ($%
Y=(0,1)^{d}$) then $B_{A}^{p}(\mathbb{R}^{d})=L_{per}^{p}(Y)$, the space of
functions in $L_{loc}^{p}(\mathbb{R}^{d})$ that are $Y$-periodic; in that
case, $M(u)=\int_{Y}u(y)dy$ for $u\in L_{per}^{p}(Y)$. Also, if $A=AP(%
\mathbb{R}^{d})$ (the continuous Bohr almost periodic functions; see e.g., 
\cite{Bohr}), then $B_{A}^{p}(\mathbb{R}^{d})$ is exactly the space of
Besicovitch almost periodic functions on $\mathbb{R}^{d}$; see \cite%
{Besicovitch}.

From an argument due to Besicovitch \cite{Besicovitch}, it is known that $%
B_{A}^{p}(\mathbb{R}^{d})\hookrightarrow L_{loc}^{p}(\mathbb{R}^{d})$, so
that $B_{A}^{p}(\mathbb{R}^{d})\hookrightarrow\mathcal{D}^{\prime}(\mathbb{R}%
^{d})$. With this embedding, we view any element of $B_{A}^{p}(\mathbb{R}%
^{d})$ as its representative in $L_{loc}^{p}(\mathbb{R}^{d})$, which allows
us to define the following space 
\begin{equation*}
B_{A}^{1,p}(\mathbb{R}^{d})=\{u\in B_{A}^{p}(\mathbb{R}^{d}):\nabla_{y}u%
\in(B_{A}^{p}(\mathbb{R}^{d}))^{d}\}, 
\end{equation*}
equipped with the seminorm 
\begin{equation*}
\left\Vert u\right\Vert _{1,p}=\left( \left\Vert u\right\Vert
_{p}^{p}+\left\Vert \nabla_{y}u\right\Vert _{p}^{p}\right) ^{\frac{1}{p}}\ \
(u\in B_{A}^{p}(\mathbb{R}^{d})), 
\end{equation*}
which is a complete seminormed space. Now, considering two algebras with
mean value $A_{y}$ and $A_{\tau}$ on $\mathbb{R}_{y}^{N}$ and $\mathbb{R}%
_{\tau}$ respectively, we define the product algebra with mean value $%
A=A_{y}\odot A_{\tau}$ as the closure in the $\sup$ norm in $\mathbb{R}^{N+1}
$, of the tensor product $A_{y}\otimes A_{\tau}$, and we assume that the
coefficients of (\ref{1}) satisfy:

\begin{itemize}
\item[(\textbf{A4})] For any $\lambda\in\mathbb{R}$ the functions $%
(y,\tau)\mapsto f(y,\tau,\lambda)$ and $(y,\tau)\mapsto g_{k}(y,\tau
,\lambda)$ belong to $B_{A}^{2}(\mathbb{R}_{y,\tau}^{N+1})$; for any $x\in%
\overline{Q} $ the matrix function $y\mapsto A_{0}(x,y)$ lies $%
(B_{A_{y}}^{2}(\mathbb{R}_{y}^{N}))^{N\times N}$. Here $g=(g_{k})_{1\leq
k\leq m}$.
\end{itemize}

\medskip

In what follows, we denote by the same letter $M$ the mean value on each of
the algebras $A_{y}$, $A_{\tau }$ and $A$ as well. This being so, let $%
(e_{j})_{1\leq j\leq N}$ denote the canonical basis in $\mathbb{R}^{N}$. For
each fixed $1\leq j\leq N$ and each $x\in Q$, consider the problem 
\begin{equation}
\left\{ 
\begin{array}{l}
\text{Find }\chi _{j}(x,\cdot )\in B_{\#A_{y}}^{1,2}(\mathbb{R}_{y}^{N})%
\text{ such that:} \\ 
\Div_{y}\left( A_{0}(x,\cdot )(e_{j}+\nabla _{y}\chi _{j}(x,\cdot ))\right)
=0\text{ in }\mathbb{R}^{N}.%
\end{array}%
\right.   \label{equ1}
\end{equation}%
Then we show that (\ref{equ1}) possesses at least a solution whose gradient
is unique in $B_{A_{y}}^{2}(\mathbb{R}_{y}^{N})^{N}$. With the functions $%
\chi _{j}$ at our disposal, we define the so-called \textit{homogenized}
coefficients as follows: 
\begin{align}
\widetilde{A}(x)& =M(A_{0}(x,\cdot )(I+\nabla _{y}\chi (x,\cdot )))\text{, }%
x\in Q  \label{equ3} \\
\widetilde{f}(r)& =M(f(\cdot ,\cdot ,r))\text{ and }\widetilde{g}%
(r)=M(g(\cdot ,\cdot ,r))\text{ for }r\in \mathbb{R}.  \notag
\end{align}%
Here $\chi (x,\cdot )=(\chi _{j}(x,\cdot ))_{1\leq j\leq N}$ and $I$ is the $%
N\times N$ identity matrix $(\delta _{ij})_{1\leq i,j\leq N}$, $\delta _{ij}$
the Kronecker delta. As we expect $u_{\varepsilon }$ to converge strongly,
only the direct average of $f$ and $g$ have to be taken, since we will not
deal with the product of two weakly convergent sequences at that level.

\begin{remark}
\label{r5.1}\emph{It can be easily checked straightforwardly that the
functions }$\widetilde{f}$\emph{\ and }$\widetilde{g}$\emph{\ are Lipschitz,
while the matrix }$\widetilde{A}(x)$\emph{\ is symmetric and satisfies
assumptions similar to those of }$A_{0}$\emph{\ (see \textbf{(A1)}).}
\end{remark}

With all this in mind, the first main result of the work is the following
theorem.

\begin{theorem}
\label{t1.1}Assume \emph{(A1)-(A4)} hold. For each $\varepsilon>0$ let $%
u_{\varepsilon}$ be the unique solution to \emph{(\ref{1})} on a given
stochastic system $(\Omega,\mathcal{F},\mathbb{P},W,\mathcal{F}^{t})$. Then
the sequence $u_{\varepsilon}$ converges in probability to $u_{0}$ in $%
L^{2}(Q_{T})$, where $u_{0}$ is the unique strong probabilistic solution to 
\begin{equation*}
\left\{ 
\begin{array}{l}
du_{0}^{\prime}-\Div\left( \widetilde{A}(x)\nabla u_{0}\right) dt=\widetilde{%
f}(u_{0})dt+\widetilde{g}(u_{0})dW\text{ in }Q_{T} \\ 
u_{0}=0\text{ on }\partial Q\times(0,T) \\ 
u_{0}(x,0)=u^{0}(x)\text{ and }u_{0}^{\prime}(x,0)=u^{1}(x)\text{ in }Q.%
\end{array}
\right. 
\end{equation*}
\end{theorem}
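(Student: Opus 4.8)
The plan is to follow the standard stochastic homogenization programme adapted to the sigma-convergence framework: first establish uniform (in $\varepsilon$) a priori estimates, then pass to the limit using compactness and sigma-convergence, and finally identify the limit with the solution of the homogenized problem, using uniqueness to conclude convergence in probability rather than merely in law.

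\textbf{Step 1: A priori estimates.} I would test (\ref{1}) with $u_\varepsilon'$, apply It\^o's formula to $\tfrac12|u_\varepsilon'|_{L^2(Q)}^2 + \tfrac12(A_0^\varepsilon\nabla u_\varepsilon,\nabla u_\varepsilon)$, use the uniform ellipticity in (A1) together with the boundedness of $A_0$, and exploit the linear-growth bounds (iv) in (A2)--(A3). After taking expectations, the It\^o term drops, and Gronwall's inequality (combined with the Burkholder--Davis--Gundy inequality for the supremum in time) yields
\begin{equation*}
\mathbb{E}\Big(\sup_{0\le t\le T}|u_\varepsilon'(t)|_{L^2(Q)}^2 + \sup_{0\le t\le T}\|u_\varepsilon(t)\|_{H_0^1(Q)}^2\Big)\le C,
\end{equation*}
with $C$ independent of $\varepsilon$. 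Higher moments (a $p$-th moment bound for some $p>2$) are obtained the same way and will be needed for tightness in the right spaces; one also derives a uniform fractional-time-regularity estimate on $u_\varepsilon'$ from the equation itself.

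\textbf{Step 2: Tightness and a.s.-representation.} From the uniform bounds the laws of $(u_\varepsilon, u_\varepsilon', W)$ are tight on, say, $C([0,T];L^2(Q))\times (C([0,T];H^{-1}(Q))\text{-weak})\times C([0,T];\mathcal{U}_0)$, using Aubin--Lions--Simon type compactness to upgrade weak-$H_0^1$ bounds plus time regularity to strong $L^2(Q_T)$ compactness of $u_\varepsilon$. By the Skorokhod (or Jakubowski--Skorokhod, since $L^2(Q_T)$-weak is not metrizable but is still handled by the Jakubowski extension) representation theorem, on a new probability space there are copies $(\widetilde u_\varepsilon, \widetilde u_\varepsilon', \widetilde W)$ converging almost surely, with $\widetilde u_\varepsilon\to u_0$ strongly in $L^2(Q_T)$ and $\nabla\widetilde u_\varepsilon\rightharpoonup \nabla u_0$ weakly.

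\textbf{Step 3: Sigma-convergence and passage to the limit.} This is where the paper's machinery enters. Using the uniform bound on $\nabla u_\varepsilon$ in $L^2(\Omega\times Q_T)^N$ and the version of the sigma-convergence compactness theorem for stochastic processes (valid over the algebra $A=A_y\odot A_\tau$), I extract a subsequence and functions $u_1\in L^2(\Omega\times Q_T;\mathcal B^{1,2}_{\#A_y})$ such that $\nabla u_\varepsilon$ sigma-converges to $\nabla u_0 + \nabla_y u_1$. One then passes to the limit in the variational formulation: the term $(A_0^\varepsilon\nabla u_\varepsilon,\nabla\phi)$ with a suitable oscillating test function $\phi = \psi(x,t)+\varepsilon\psi_1(x,t)\rho(x/\varepsilon,t/\varepsilon)$ converges, by the sigma-convergence of products of a strongly convergent and a sigma-convergent sequence, to $M\big(A_0(x,\cdot)(\nabla u_0+\nabla_y u_1)\cdot(\nabla\psi+\nabla_y\rho\,\psi_1)\big)$. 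For the semilinear and noise terms, since $\widetilde u_\varepsilon\to u_0$ \emph{strongly} in $L^2(Q_T)$ a.s. and $f,g$ are Lipschitz and $B^2_A$ in $(y,\tau)$, the sigma-convergence of $f(x/\varepsilon,t/\varepsilon,u_\varepsilon)$ and $g_k(x/\varepsilon,t/\varepsilon,u_\varepsilon)$ to their means $\widetilde f(u_0)$, $\widetilde g(u_0)$ follows (this uses continuity of the Nemytskii-type map plus the mean-value averaging, exactly as foreshadowed in the remark preceding the theorem); the stochastic integral term passes to the limit by the standard lemma on convergence of stochastic integrals (e.g.\ Debussche--Glatt-Holtz--Temam) given a.s.\ convergence of integrands and of the driving Wiener processes. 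Choosing $\psi_1\rho$-type test functions isolates the cell problem (\ref{equ1}), giving $u_1(x,t,y)=\sum_j \partial_{x_j}u_0(x,t)\,\chi_j(x,y)$; substituting back and taking $\psi_1=0$ yields precisely $-\Div(\widetilde A(x)\nabla u_0)$ with $\widetilde A$ as in (\ref{equ3}), and the limit equation is the announced homogenized SPDE.

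\textbf{Step 4: Identification and convergence in probability.} The limit $u_0$ is a probabilistic strong (in fact weak-in-probability, then strong-by-Yamada--Watanabe) solution of the homogenized problem; since the coefficients $\widetilde A$, $\widetilde f$, $\widetilde g$ satisfy (A1)--(A3) by Remark~\ref{r5.1}, that problem has a unique strong solution by the same existence/uniqueness result cited from \cite{Chow}, and pathwise uniqueness holds. A standard Gy\"ongy--Krylov argument then promotes the subsequential convergence in law on the original space to convergence in probability of $u_\varepsilon$ to $u_0$ in $L^2(Q_T)$: any two subsequences have a joint subsequence whose Skorokhod limit, being a pair of solutions to the same well-posed homogenized equation driven by the same noise, has equal components, forcing convergence in probability of the full sequence.

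\textbf{Main obstacle.} The delicate point is Step 3 — rigorously justifying the passage to the limit in the elliptic flux term within the \emph{stochastic} and \emph{sigma-convergence} setting simultaneously: one must commute the mean-value operator $M$ (in the fast variables) with the stochastic integral and with weak limits in $\omega$, control the cross term coming from the corrector test function uniformly in $\omega$, and ensure the sigma-convergence of $\nabla u_\varepsilon$ is compatible with the a.s.\ representation from Skorokhod's theorem (the new process $\widetilde u_\varepsilon$ must still satisfy an equation of the same form so that its gradient inherits the required uniform bound and hence sigma-converges). Handling the $A_\tau$-factor, i.e.\ the genuine time-oscillation $t/\varepsilon$ in $f$ and $g$, against the It\^o calculus is the subtlest part, and is exactly what the "sigma-convergence for stochastic processes" variant announced in the introduction is designed to resolve.
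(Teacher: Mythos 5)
Your proposal is correct and follows essentially the same route as the paper: uniform moment estimates plus fractional time regularity, tightness and Skorokhod representation on a product phase space, passage to the limit via the stochastic sigma-convergence compactness theorem with oscillating test functions $\psi_0+\varepsilon\psi_1$ to decouple the cell problem and identify $\widetilde A$, $\widetilde f$, $\widetilde g$, and finally the Gy\"ongy--Krylov criterion combined with pathwise uniqueness of the homogenized SPDE to upgrade convergence in law to convergence in probability. The only cosmetic difference is that the paper works on a Polish phase space so the classical Skorokhod theorem suffices, without needing the Jakubowski extension you mention.
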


The problem (\ref{equ1}) above that has been used to define the homogenized
matrix $\widetilde{A}(x)$ is posed on the entire numerical set $\mathbb{R}%
^{N}$ and hence the numerical computation of $\widetilde{A}(x)$ is somewhat
difficult. We overcome this difficulty by considering rather approximate
coefficients defined as follows: for each $R>0$ set 
\begin{equation}
\widetilde{A}_{R}(x)=\frac{1}{\left\vert B_{R}\right\vert }%
\int_{B_{R}}A_{0}(x,y)(I+\nabla_{y}\chi_{R}(x,y))dy   \label{equ2}
\end{equation}
where $\chi_{R}(x,\cdot)=(\chi_{j,R}(x,\cdot))_{1\leq j\leq N}$ with $%
\chi_{j,R}(x,\cdot)\equiv u\in H_{0}^{1}(B_{R})$ being the unique solution
to the Dirichlet problem 
\begin{equation}
\Div_{y}\left( A_{0}(x,\cdot)(e_{j}+\nabla_{y}u)\right) =0\text{ in }B_{R}. 
\label{eq3}
\end{equation}
However in practice, the appropriate computational method used for this kind
of problems is the heterogeneous multiscale finite element method \cite%
{Assyr2005, Weinan2003} arising by choosing a sampling finite subset $%
\{x_{k}:1\leq k\leq d\}$ of $Q$ allowing to solve (\ref{eq3}) for a finite
family of the macroscopic variable (behaving in (\ref{eq3}) as a parameter) $%
x=x_{k}$. Therefore (\ref{eq3}) reads 
\begin{equation}
\Div_{y}\left( A_{0}(x_{k},\cdot)(e_{j}+\nabla_{y}u)\right) =0\text{ in }%
B_{R}.   \label{eq4}
\end{equation}
Based on (\ref{eq4}), we assume in the next result that the matrix function $%
x\mapsto A_{0}(x,\cdot)$ is constant (with respect to $x$), that is, $%
A_{0}(x,y)\equiv A_{0}(y)$ for any $(x,y)\in Q\times\mathbb{R}^{N}$.
Therefore 
\begin{equation}
\left\{ 
\begin{array}{l}
\widetilde{A}(x)\equiv\widetilde{A}=M(A_{0}(I+\nabla_{y}\chi)) \\ 
\widetilde{A}_{R}(x)\equiv\widetilde{A}_{R}=\left\vert B_{R}\right\vert
^{-1}\int_{B_{R}}A_{0}(y)(I+\nabla_{y}\chi_{R}(y))dy.%
\end{array}
\right.   \label{eq5}
\end{equation}

The second main result of the work reads as follows.

\begin{theorem}
\label{t1.2}Let $\widetilde{A}_{R}$ and $\widetilde{A}$ be defined by \emph{(%
\ref{eq5})}. Then $\widetilde{A}_{R}$ converges (as $R\rightarrow \infty$)
to the homogenized matrix $\widetilde{A}$.
\end{theorem}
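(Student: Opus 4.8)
The plan is to turn the statement into a deterministic homogenization assertion on the \emph{fixed} ball $B_1$ by rescaling the growing balls $B_R$, after first reducing to a scalar convergence. Fix $\xi\in\mathbb{R}^N$. By linearity $\chi_{\xi,R}:=\sum_{j}\xi_j\chi_{j,R}\in H_0^1(B_R)$ solves $\Div_y(A_0(\xi+\nabla_y\chi_{\xi,R}))=0$ in $B_R$, and testing this equation with $\chi_{\xi,R}$ itself yields $\int_{B_R}A_0(\xi+\nabla_y\chi_{\xi,R})\cdot\nabla_y\chi_{\xi,R}\,dy=0$, so that
\[
\widetilde A_R\xi\cdot\xi=\frac{1}{|B_R|}\int_{B_R}A_0(y)(\xi+\nabla_y\chi_{\xi,R})\cdot\xi\,dy=\frac{1}{|B_R|}\int_{B_R}A_0(\xi+\nabla_y\chi_{\xi,R})\cdot(\xi+\nabla_y\chi_{\xi,R})\,dy .
\]
The same computation with $\chi_\xi:=\sum_j\xi_j\chi_j$ and (\ref{equ1}) (where $A_0$ is now $x$-independent) gives $\widetilde A\xi\cdot\xi=M(A_0(\xi+\nabla_y\chi_\xi)\cdot(\xi+\nabla_y\chi_\xi))$. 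Since $\widetilde A$ is symmetric (Remark \ref{r5.1}) and $\widetilde A_R$ is symmetric as well (test the $i$-th cell equation with $\chi_{j,R}$ and use symmetry of $A_0$), it is enough by polarization to prove $\widetilde A_R\xi\cdot\xi\to\widetilde A\xi\cdot\xi$ for each $\xi$.

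Next I would rescale. Set $\varepsilon=1/R$ and $w_\varepsilon(z)=R^{-1}\chi_{\xi,R}(Rz)$ for $z\in B_1$; then $w_\varepsilon\in H_0^1(B_1)$, $\nabla_z w_\varepsilon(z)=(\nabla_y\chi_{\xi,R})(Rz)$, the cell equation becomes $\Div_z(A_0(z/\varepsilon)(\xi+\nabla_z w_\varepsilon))=0$ in $B_1$, and a change of variables turns the identity above into
\[
\widetilde A_R\xi\cdot\xi=\frac{1}{|B_1|}\int_{B_1}A_0(z/\varepsilon)(\xi+\nabla_z w_\varepsilon(z))\cdot\xi\,dz .
\]
This is precisely the homogenization of a Dirichlet problem with rapidly oscillating coefficients on the \emph{fixed} domain $B_1$. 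Since $y\mapsto A_0(y)\in(B_{A_y}^2(\mathbb{R}_y^N))^{N\times N}$ is in $L^\infty$ and uniformly elliptic, testing with $w_\varepsilon$ gives bounds on $w_\varepsilon$ in $H_0^1(B_1)$ and on the flux $A_0(\cdot/\varepsilon)(\xi+\nabla w_\varepsilon)$ in $L^2(B_1)^N$, uniform in $\varepsilon$. Invoking the deterministic $\Sigma$-convergence homogenization result for such elliptic operators — the elliptic ingredient of the machinery underlying Theorem \ref{t1.1} — we get, up to a subsequence, $w_\varepsilon\rightharpoonup w_0$ in $H_0^1(B_1)$ and $A_0(\cdot/\varepsilon)(\xi+\nabla w_\varepsilon)\rightharpoonup\widetilde A(\xi+\nabla w_0)$ weakly in $L^2(B_1)^N$, the homogenized matrix being exactly the $\widetilde A$ of (\ref{eq5}) because the cell problem produced by that machinery coincides with (\ref{equ1}).

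Finally I would pass to the limit: using the weak convergence of the flux against the constant test vector $\xi$,
\[
\widetilde A_R\xi\cdot\xi=\frac{1}{|B_1|}\int_{B_1}A_0(z/\varepsilon)(\xi+\nabla w_\varepsilon)\cdot\xi\,dz\longrightarrow\frac{1}{|B_1|}\int_{B_1}\widetilde A(\xi+\nabla w_0)\cdot\xi\,dz=\widetilde A\xi\cdot\xi ,
\]
the last equality because $\int_{B_1}\nabla w_0\,dz=0$ for $w_0\in H_0^1(B_1)$. The limit does not depend on the subsequence, so the whole family $\widetilde A_R\xi\cdot\xi$ converges; as this holds for every $\xi$, polarization together with symmetry gives $\widetilde A_R\to\widetilde A$. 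The hard part is the third step: checking, in the general algebra-with-mean-value setting and without appealing to the spectrum of the algebra, that the homogenized operator of $\Div_z(A_0(\cdot/\varepsilon)\nabla\,\cdot\,)$ on $B_1$ is governed by the corrector problem (\ref{equ1}); once the elliptic part of the $\Sigma$-convergence theory is in place, what remains is the routine manipulation above. An alternative, purely variational route avoids the limit passage altogether: prove $\limsup_R\widetilde A_R\xi\cdot\xi\le\widetilde A\xi\cdot\xi$ by inserting $\varphi_R v_k$ into the energy minimization on $B_R$ — with $v_k$ a bounded smooth element of $A_y$ approximating $\chi_\xi$ in $B_{A_y}^{1,2}(\mathbb{R}_y^N)$ and $\varphi_R$ a cut-off equal to $1$ on $B_{(1-\delta)R}$ and supported in $B_R$, then letting $R\to\infty$, $\delta\to0$, $k\to\infty$ in that order — and prove the reverse inequality via the dual variational characterization of $\widetilde A^{-1}$ in terms of divergence-free fields with prescribed mean value.
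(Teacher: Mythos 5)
Your proposal is correct and follows essentially the same route as the paper's: rescale $B_R$ to the unit ball so that $\widetilde{A}_R$ becomes the average of the flux of a Dirichlet problem with coefficients oscillating at scale $\varepsilon=1/R$ on the fixed domain $B_1$, invoke the elliptic homogenization/flux-convergence result underlying Theorem \ref{t1.1} to pass to the limit, and integrate over $B_1$. The only (harmless) differences are your detour through quadratic forms and polarization, and that you bypass the paper's identification of the weak limit $w_0=0$ by simply noting $\int_{B_1}\nabla w_0\,dz=0$ for $w_0\in H_0^1(B_1)$.
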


The proof of Theorem \ref{t1.1} is given in Section \ref{sec4} while that of
Theorem \ref{t1.2} is done in Section \ref{sec6}.

Homogenization of stochastic partial differential equations with rapidly
oscillating coefficients was studied in \cite{Ichihara1, Ichihara2, MS2015,
MS2016-1, MS2016-2, WoukengArxiv, SAA13, Wang1, Wang2, Fu, Fu1, Jiang}, to
cite a few. The homogenization of hyperbolic stochastic partial differential
equations (SPDEs) is at its infancy as evidenced by the very few number of
published papers in that direction; see e.g. \cite{MS2015, MS2016-1,
MS2016-2}. In the three references above the authors deal with linear
hyperbolic SPDE associated to the operator 
\begin{equation*}
u^{\prime\prime}-\nabla\cdot\left( A_{0}\left( \frac{x}{\varepsilon}\right)
\nabla u\right) 
\end{equation*}
with periodic coefficients $A_{0}$ depending only on the fast variable $%
y=x/\varepsilon$. It is worth recalling that the study undertaken here is
the first one dealing with hyperbolic SPDEs beyond the periodic setting.

We emphasize that the use of the concept of sigma convergence allows us, not
only to extend the well-known results in the periodic setting to the almost
periodic framework and beyond, but also to take into account the microscopic
behaviour of the coefficients of the problem studied. This is very
important, as far as one deals with problems with strongly oscillating
coefficients, as it is the case here.

The rest of the work is organized as follows. Section \ref{sec2} deals with
some useful a priori estimates and the study of the tightness of the
sequence of probability laws of the solutions of (\ref{1}). In Section \ref%
{sec3}, we present the sigma-convergence for stochastic processes revisited.
Starting from the notion of algebras with mean value, we end with some
properties of the above concept. Finally in Section \ref{sec5} we present
some applications of Theorem \ref{t1.1}.

\section{A priori estimates and tightness property\label{sec2}}

In this section, we derive some a priori estimates and prove the tightness
of the probability measures generated by the solution of problem (\ref{1}).
Throughout $C$ will denote a generic constant independent of $\varepsilon$
that may vary from line to line.

\subsection{A priori estimates}

The following result gives some a priori estimates of the solution of
problem (\ref{1}).

\begin{lemma}
\label{m}Under the assumptions \textbf{(A1)-(A3)}, the solution $%
u_{\varepsilon }$ of problem \emph{(\ref{1})} satisfies the following
estimates 
\begin{equation}
\mathbb{E}\sup_{0\leq s\leq T}\Vert u_{\varepsilon }(s)\Vert
_{H_{0}^{1}(Q)}^{4}+\mathbb{E}\sup_{0\leq s\leq T}\Vert u_{\varepsilon
}^{\prime }(s)\Vert _{L^{2}(Q)}^{4}\leq C,  \label{ta1}
\end{equation}%
\begin{equation}
\mathbb{E}\left\vert u_{\varepsilon }^{\prime }(t)-\int_{0}^{t}g\left( \frac{%
x}{\varepsilon },\frac{\tau }{\varepsilon },u_{\varepsilon }(\tau )\right)
dW(\tau )\right\vert _{W^{1,2}([0,T];L^{2}(Q))}^{2}\leq C,  \label{ta2}
\end{equation}%
\begin{equation}
\mathbb{E}\left\vert \int_{0}^{t}g\left( \frac{x}{\varepsilon },\frac{\tau }{%
\varepsilon },u_{\varepsilon }(\tau )\right) dW(\tau )\right\vert
_{W^{\alpha ,4}([0,T];L^{2}(Q))}^{4}\leq C  \label{ta3}
\end{equation}%
for $\alpha \in \lbrack 0,\frac{1}{2})$ and for all $t\geq 0$.
\end{lemma}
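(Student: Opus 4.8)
The plan is to follow the standard energy-method approach for stochastic hyperbolic equations, applying Itô's formula to a suitable functional and then using the Burkholder--Davis--Gundy (BDG) inequality together with Gronwall's lemma. First I would derive the basic energy identity: for fixed $\varepsilon$, apply Itô's formula to the functional $t\mapsto \|u_\varepsilon'(t)\|_{L^2(Q)}^2 + (A_0^\varepsilon\nabla u_\varepsilon(t),\nabla u_\varepsilon(t))$, using the variational formulation in Definition 1.1 with $\phi$ replaced (after a density argument) by $u_\varepsilon'(t)$. The symmetry of $A_0$ in \textbf{(A1)} is what makes $\frac{d}{dt}(A_0^\varepsilon\nabla u_\varepsilon,\nabla u_\varepsilon) = 2(A_0^\varepsilon\nabla u_\varepsilon,\nabla u_\varepsilon')$, and the Itô correction term $\sum_k\int_0^t|g_k^\varepsilon(\tau,u_\varepsilon)|_{L^2(Q)}^2\,d\tau$ appears. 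Then, to get the fourth-moment bound \eqref{ta1}, I would apply Itô's formula once more to the square of the energy, i.e. to $E_\varepsilon(t)^2$ where $E_\varepsilon(t)$ denotes that energy functional; alternatively one works directly with the fourth power. Using uniform ellipticity $A_0\eta\cdot\eta\ge\alpha|\eta|^2$ and boundedness of $A_0$, the energy is equivalent (up to constants independent of $\varepsilon$) to $\|u_\varepsilon'(t)\|_{L^2}^2 + \|u_\varepsilon(t)\|_{H_0^1}^2$.

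Next I would bound the drift and diffusion contributions. The forcing term $\int_0^t(f^\varepsilon(\tau,u_\varepsilon),u_\varepsilon')\,d\tau$ is controlled by the sublinear growth (A2)(iv): $|f^\varepsilon(\tau,u_\varepsilon)|_{L^2}\le c_2(1+\|u_\varepsilon\|_{L^2})$, so after Cauchy--Schwarz and Young's inequality it is absorbed into $\int_0^t E_\varepsilon(\tau)\,d\tau$ plus a constant. The Itô correction term is handled by (A3)(iv): $\sum_k|g_k^\varepsilon(\tau,u_\varepsilon)|_{L^2}^2 = |g^\varepsilon(\tau,u_\varepsilon)|_{L_2(\mathcal U,L^2(Q))}^2 \le c_4^2(1+\|u_\varepsilon\|_{L^2})^2$, again bounded by $C(1+E_\varepsilon(\tau))$. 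For the stochastic integral $\int_0^t(g^\varepsilon(\tau,u_\varepsilon)\,dW,u_\varepsilon')$ — or its analogue after the second application of Itô — the key is BDG: $\mathbb E\sup_{s\le t}\big|\int_0^s(\cdots)dW\big|^2 \le C\,\mathbb E\big(\int_0^t \|u_\varepsilon'(\tau)\|_{L^2}^2\,|g^\varepsilon|_{L_2}^2\,d\tau\big)^{1/2}$, which after Young's inequality splits into $\tfrac14\mathbb E\sup_{s\le t}E_\varepsilon(s)$ plus $C\mathbb E\int_0^t(1+E_\varepsilon(\tau)^2)\,d\tau$. Then $\mathbb E\sup_{s\le t}E_\varepsilon(s)^2 \le C + C\int_0^t \mathbb E\sup_{r\le\tau}E_\varepsilon(r)^2\,d\tau$, and Gronwall gives \eqref{ta1}, with the constant depending only on $T$, $\alpha$, the $c_i$'s, $\|A_0\|_\infty$, and the (fixed) norms of $u^0\in H_0^1(Q)$, $u^1\in L^2(Q)$ — crucially not on $\varepsilon$, since all the structural constants in (A1)--(A3) are $\varepsilon$-independent.

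For \eqref{ta2}, write $v_\varepsilon(t):=u_\varepsilon'(t)-\int_0^t g^\varepsilon(\tau,u_\varepsilon)\,dW(\tau)$; from the equation this equals $u^1+\int_0^t\big(\Div(A_0^\varepsilon\nabla u_\varepsilon)+f^\varepsilon(\tau,u_\varepsilon)\big)\,d\tau$, so $v_\varepsilon$ is (pathwise) absolutely continuous in time with $v_\varepsilon'(t)=\Div(A_0^\varepsilon\nabla u_\varepsilon(t))+f^\varepsilon(t,u_\varepsilon)$ in $H^{-1}(Q)$. Then $\|v_\varepsilon\|_{W^{1,2}([0,T];L^2(Q))}$ — interpreting the norm appropriately, i.e. with values in $H^{-1}$ or after noting $v_\varepsilon\in L^2$ from \eqref{ta1} — is bounded: $\|v_\varepsilon'\|_{L^2(0,T;H^{-1})}^2 \le C\int_0^T(\|A_0^\varepsilon\nabla u_\varepsilon\|_{L^2}^2 + \|f^\varepsilon\|_{L^2}^2)\,d\tau \le C(1+\int_0^T E_\varepsilon(\tau)\,d\tau)$, and taking expectations and using \eqref{ta1} yields \eqref{ta2}. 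Finally \eqref{ta3} is a now-standard fractional-regularity-in-time estimate for stochastic convolutions: using the Kolmogorov-type criterion that for $0<\alpha<1/2$, $\mathbb E\|I_\varepsilon\|_{W^{\alpha,4}([0,T];L^2(Q))}^4 \le C\,\mathbb E\int_0^T\int_0^T \frac{|I_\varepsilon(t)-I_\varepsilon(s)|_{L^2}^4}{|t-s|^{1+4\alpha}}\,dt\,ds$ with $I_\varepsilon(t)=\int_0^t g^\varepsilon\,dW$, then bounding $\mathbb E|I_\varepsilon(t)-I_\varepsilon(s)|_{L^2}^4 \le C|t-s|^2\,\mathbb E\sup_\tau |g^\varepsilon(\tau,u_\varepsilon)|_{L_2}^4 \le C|t-s|^2(1+\mathbb E\sup_s\|u_\varepsilon(s)\|_{L^2}^4)$ via BDG, which is finite and $\varepsilon$-uniform by \eqref{ta1}; since $2-(1+4\alpha)>-1 \iff \alpha<1/2$, the double integral converges. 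The main obstacle is the careful bookkeeping in the iterated Itô/BDG estimate for the fourth moment — ensuring the stochastic term is genuinely absorbed (the $\tfrac14$ factor) rather than merely estimated, so that Gronwall closes — and checking at every step that no constant secretly depends on $\varepsilon$; the latter is immediate from the hypotheses but must be stated.
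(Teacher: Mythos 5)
Your proposal is correct and follows essentially the same route as the paper, which simply records that \eqref{ta1} is obtained by It\^{o}'s formula, a maximal inequality and Gronwall's lemma, that \eqref{ta2} follows from the identity $u_{\varepsilon}'(t)-\int_0^t g^{\varepsilon}\,dW=u^1+\int_0^t\Div(A_0^{\varepsilon}\nabla u_{\varepsilon})\,d\tau+\int_0^t f^{\varepsilon}\,d\tau$ together with \eqref{ta1}, and that \eqref{ta3} is Lemma 2.1 of Flandoli--Gatarek (whose proof via the fractional Sobolev seminorm is exactly the double-integral computation you carry out). Your elaboration --- the iterated It\^{o}/BDG step for the fourth moment, and the observation that the time derivative in \eqref{ta2} only lives in $H^{-1}(Q)$ so the norm must be read accordingly --- fills in details the paper leaves implicit, but introduces no new idea and no gap.
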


\begin{remark}
See \cite{Flandoli} for the definitions and the properties of the spaces $%
W^{\alpha,p}([0,T];X)$ and $W^{1,2}([0,T],X)$ where $p>1$ and $\alpha
\in(0,1)$.
\end{remark}

\begin{proof}[Proof of Lemma \protect\ref{m}]
The proof of (\ref{ta1}) follows is standard and follows from the
application of the Ito's formula, Doob's inequality and the Gronwall's
lemma. The proof of (\ref{ta2}) follows from the relation 
\begin{equation}
u_{\varepsilon }^{\prime }(t)-\int_{0}^{t}g\left( \frac{x}{\varepsilon },%
\frac{\tau }{\varepsilon },u_{\varepsilon }(\tau )\right)
dW(s)=u^{1}+\int_{0}^{t}\Div(A_{0}^{\varepsilon }\nabla u_{\varepsilon
}(\tau ))d\tau +\int_{0}^{t}f\left( \frac{x}{\varepsilon },\frac{\tau }{%
\varepsilon },u_{\varepsilon }(\tau )\right) d\tau 
\end{equation}%
and the estimates (\ref{ta1}). (\ref{ta3}) follows from Lemma 2.1 of \cite%
{Flandoli}.
\end{proof}

\subsection{Tightness property of probability measures induced by the
solutions\label{subsec2.2}}

We consider the phase space 
\begin{equation*}
S=\mathcal{C}(0,T;\mathcal{U}_{0})\times L^{2}(0,T;L^{2}(Q))\cap
C(0,T;H^{-1}(Q))\times C(0,T;H^{-1}(Q)).
\end{equation*}%
We may think of the first component $S_{W}=\mathcal{C}(0,T;\mathcal{U}_{0})$
of this phase space as the set where the driving Brownian motion are defined
and the second component $S_{u}=L^{2}(0,T;L^{2}(Q))\cap \mathcal{C}%
(0,T;H^{-1}(Q))$ is the set where the solution $u_{\varepsilon }$ lives. The
third component $S_{u^{\prime }}=\mathcal{C}(0,T;H^{-1}(Q))$ is also the set
where the solution $u_{\varepsilon }^{\prime }$ lives.

We consider the probability measures 
\begin{equation}
\mu _{W}(.)=\mathbb{P}(W\in ..)\in Pr(\mathcal{C}(0,T;\mathcal{U}_{0}),
\end{equation}%
\begin{equation}
\mu _{u}^{\varepsilon }(.)=\mathbb{P}(u^{\varepsilon }\in .)\in
Pr(L^{2}(0,T;L^{2}(Q))\cap \mathcal{C}(0,T;H^{-1}(Q))),
\end{equation}%
\begin{equation}
\mu _{u^{\prime }}^{\varepsilon }(.)=\mathbb{P}(u_{\varepsilon }^{\prime
}\in .)\in Pr(\mathcal{C}(0,T;H^{-1}(Q))),
\end{equation}%
where $Pr(A)$ is the set of all probability measures on $(A,\mathcal{B}(A))$
for a complete separable metric space $A$. This defines a sequence of
probability measures 
\begin{equation}
\pi ^{\varepsilon }=\mu _{W}\times \mu _{u}^{\varepsilon }\times \mu
_{u^{\prime }}^{\varepsilon }
\end{equation}%
on the phase space $S$.

One of the main result of this section is the following theorem.

\begin{theorem}
\label{paul} The family of measures $\{\pi^{\varepsilon}\}$ is tight over
the phase space S.
\end{theorem}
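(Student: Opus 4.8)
The plan is to establish tightness of $\{\pi^{\varepsilon}\}$ on the product space $S = S_W \times S_u \times S_{u'}$ component by component, since a product of tight families of marginals is tight on the product space. The first marginal $\mu_W$ is constant in $\varepsilon$ and hence trivially tight (a single Radon probability measure on the Polish space $\mathcal{C}(0,T;\mathcal{U}_0)$ is tight). So the real work is to show tightness of $\{\mu_u^{\varepsilon}\}$ on $L^2(0,T;L^2(Q)) \cap \mathcal{C}(0,T;H^{-1}(Q))$ and of $\{\mu_{u'}^{\varepsilon}\}$ on $\mathcal{C}(0,T;H^{-1}(Q))$. For each of these I would exhibit, for every $\delta>0$, a compact set $K_\delta$ in the relevant space with $\mathbb{P}(u_\varepsilon \notin K_\delta) < \delta$ uniformly in $\varepsilon$, and then invoke Markov's inequality together with the a priori bounds of Lemma~\ref{m}.

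For the $u_\varepsilon$-component I would use a compactness criterion of Aubin–Lions–Simon type combined with a fractional Sobolev estimate in time. Concretely, consider sets of the form
\begin{equation*}
K_\delta = \left\{ v : \|v\|_{L^\infty(0,T;H_0^1(Q))} \le M_\delta,\ \|v\|_{W^{\alpha,4}(0,T;L^2(Q))} \le M_\delta \right\},
\end{equation*}
for a suitable $\alpha \in (0,\tfrac12)$. The embedding $H_0^1(Q) \hookrightarrow L^2(Q)$ is compact and $L^2(Q) \hookrightarrow H^{-1}(Q)$ is continuous, so by the Simon/Flandoli–Gatarek compactness lemma such a $K_\delta$ is relatively compact in $L^2(0,T;L^2(Q)) \cap \mathcal{C}(0,T;H^{-1}(Q))$. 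The uniform bound $\mathbb{E}\,\|u_\varepsilon\|_{L^\infty(0,T;H_0^1(Q))}^4 \le C$ comes from \eqref{ta1}. The time-regularity bound is obtained by writing $u_\varepsilon'(t) = u^1 + \int_0^t \Div(A_0^\varepsilon \nabla u_\varepsilon)\,d\tau + \int_0^t f(\cdots)\,d\tau + \int_0^t g(\cdots)\,dW$, integrating once more in time, and controlling each term: the deterministic terms are handled by \eqref{ta1} and the linear growth of $f$ in (A2)(iv), while the stochastic convolution term is handled by \eqref{ta3}, which already provides a uniform $W^{\alpha,4}(0,T;L^2(Q))$ bound. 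Then $\mathbb{P}(u_\varepsilon \notin K_\delta) \le M_\delta^{-4}\,\mathbb{E}[\cdots] \le C M_\delta^{-4}$, and choosing $M_\delta$ large gives the claim.

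For the $u_\varepsilon'$-component on $\mathcal{C}(0,T;H^{-1}(Q))$ I would split $u_\varepsilon' = v_\varepsilon + z_\varepsilon$ where $z_\varepsilon(t) = \int_0^t g(\tfrac{x}{\varepsilon},\tfrac{\tau}{\varepsilon},u_\varepsilon)\,dW$ is the stochastic convolution and $v_\varepsilon = u_\varepsilon' - z_\varepsilon$. By \eqref{ta2}, $v_\varepsilon$ is uniformly bounded in $L^2(\Omega; W^{1,2}(0,T;L^2(Q)))$, and since $W^{1,2}(0,T;L^2(Q)) \hookrightarrow \mathcal{C}(0,T;L^2(Q)) \hookrightarrow\hookrightarrow \mathcal{C}(0,T;H^{-1}(Q))$ (the last embedding compact because $L^2(Q) \hookrightarrow H^{-1}(Q)$ is compact and the $W^{1,2}$-bound gives equicontinuity in $H^{-1}$ via interpolation), balls in $W^{1,2}(0,T;L^2(Q))$ are relatively compact in $\mathcal{C}(0,T;H^{-1}(Q))$. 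For $z_\varepsilon$, \eqref{ta3} gives a uniform bound in $L^4(\Omega; W^{\alpha,4}(0,T;L^2(Q)))$ with $\alpha \in (0,\tfrac12)$; since $\alpha \cdot 4 > 1$ one has $W^{\alpha,4}(0,T;L^2(Q)) \hookrightarrow \mathcal{C}(0,T;L^2(Q))$ and, composing with the compact embedding $L^2(Q)\hookrightarrow\hookrightarrow H^{-1}(Q)$ plus the fractional-in-time equicontinuity, balls in $W^{\alpha,4}(0,T;L^2(Q))$ are relatively compact in $\mathcal{C}(0,T;H^{-1}(Q))$ as well. Taking $K_\delta$ to be a sum of two such balls with radii growing as $\delta \to 0$ and applying Markov's inequality to each piece yields $\mathbb{P}(u_\varepsilon' \notin K_\delta) < \delta$ uniformly in $\varepsilon$.

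The main obstacle is the stochastic convolution term $z_\varepsilon$: it is not differentiable in time, so only fractional time-regularity is available, and one must verify carefully that the $W^{\alpha,4}$-bound with $\alpha\in(0,\tfrac12)$ from \eqref{ta3} is strong enough — i.e. that $4\alpha>1$ can be arranged — to obtain compactness in $\mathcal{C}(0,T;H^{-1}(Q))$ via a fractional Arzelà–Ascoli / Simon-type argument, and that all the bounds are genuinely uniform in $\varepsilon$ (which they are, because the constants $c_i$ in (A2)–(A3) and the ellipticity constant $\alpha$ in (A1) do not depend on $\varepsilon$). Once the three marginals are handled, tightness of $\pi^{\varepsilon} = \mu_W \times \mu_u^{\varepsilon} \times \mu_{u'}^{\varepsilon}$ on $S$ follows since $K_\delta^W \times K_\delta^u \times K_\delta^{u'}$ is compact in $S$ and carries mass at least $1 - 3\delta$.
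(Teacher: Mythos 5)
Your proposal follows essentially the same route as the paper: Markov's inequality combined with the a priori estimates of Lemma \ref{m} and the Flandoli--Gatarek compact embeddings $W^{1,2}(0,T;L^{2}(Q))\Subset \mathcal{C}(0,T;H^{-1}(Q))$ and $W^{\alpha,4}(0,T;L^{2}(Q))\Subset \mathcal{C}(0,T;H^{-1}(Q))$ for $4\alpha>1$, handling the three marginals separately and splitting $u_{\varepsilon}^{\prime}$ into its absolutely continuous part and the stochastic convolution exactly as in the text. One minor remark: the time-equicontinuity of $u_{\varepsilon}$ itself is obtained more directly from the $L^{\infty}(0,T;L^{2}(Q))$ bound on $u_{\varepsilon}^{\prime}$ in (\ref{ta1}) (this is what the paper's set $Z_{1}$ encodes) rather than by integrating the equation, whose divergence term is only $H^{-1}(Q)$-valued.
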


\begin{proof}
Let $Z_{1}$ be the space of functions $\Phi (x,t)$ defined and measurable on 
$Q\times \lbrack 0,T]$ and such that 
\begin{equation*}
\sup_{0\leq t\leq T}\Vert \Phi (t)\Vert _{H_{0}^{1}(Q)}^{2}\leq C;\ \
\sup_{0\leq t\leq T}\Vert \Phi ^{\prime }(t)\Vert _{L^{2}(Q)}^{2}\leq C.
\end{equation*}%
$Z_{1}$ is a compact subset of $L^{2}(0,T;L^{2}(Q))$. Let $B_{R}^{1}$ be the
ball of radius $R>0$ in $Z_{1}$. Using the estimates (\ref{ta1}), we get 
\begin{equation}
\mu _{u}^{\varepsilon }((B_{R}^{1})^{c})\leq \frac{C}{R^{2}}.
\end{equation}%
According to Theorem 2.2 in \cite{Flandoli}, the following compact embedding
holds 
\begin{equation}
W^{1,2}([0,T];L^{2}(Q))\Subset \mathcal{C}(0,T;H^{-1}(Q)).
\end{equation}%
let $B_{R}^{2}$ the ball of radius $R$ in $W^{1,2}([0,T];L^{2}(Q))$. Based
on the estimates (\ref{ta1}), we also get 
\begin{equation}
\mu _{u}^{\varepsilon }((B_{R}^{2})^{c})\leq \frac{C}{R^{2}}.
\end{equation}%
We observe that $B_{R}^{1}\cap B_{R}^{2}$ is compact in $L^{2}(0,T;L^{2}(Q))%
\cap \mathcal{C}(0,T;H^{-1}(Q))$. and for any $R>0$ 
\begin{equation}
\mu _{u}^{\varepsilon }((B_{R}^{1}\cap B_{R}^{2})^{c})\leq \frac{C}{R^{2}}.
\end{equation}%
Using the estimates (\ref{ta2})-(\ref{ta3}) and the compact embeddings 
\begin{equation}
W^{1,2}(0,T;L^{2}(Q))\Subset \mathcal{C}(0,T;H^{-1}(Q)),
\end{equation}%
\begin{equation}
W^{\alpha ,4}(0,T;L^{2}(Q))\Subset \mathcal{C}(0,T;H^{-1}(Q)),
\end{equation}%
where $\alpha $ is such that $4\alpha >1$, we can prove as in \cite{Flandoli}
that $\mu _{u^{\prime }}^{\varepsilon }$ is tight in $\mathcal{C}%
(0,T;H^{-1}(Q))$. The tightness of $\mu _{W}^{\varepsilon }$ in $\mathcal{C}%
(0,T;\mathcal{U}_{0})$ is a classical result. This completes the proof of
the tightness of $\{\pi ^{\varepsilon }\}$ in $S$.
\end{proof}

Prokhorov's compactness result enables us to extract from $(\pi
^{\varepsilon })$ a subsequence $(\pi ^{\varepsilon _{n}})$ such that $\pi
^{\varepsilon _{n}}$ weakly converges to a probability measure $\pi $ on $S$%
. Skorokhod's theorem ensures the existence of a complete probability space $%
(\overline{\Omega },\overline{\mathcal{F}},\overline{\mathbb{P}})$ and
random variables $(W^{\varepsilon _{n}},u_{\varepsilon _{n}},u_{\varepsilon
_{n}}^{\prime })$ and $(\overline{W},u_{0},w)$ defined on $(\overline{\Omega 
},\overline{\mathcal{F}},\overline{\mathbb{P}})$ with values in $S$ such
that 
\begin{equation}
\text{the probability law of}~(W^{\varepsilon _{n}},u_{\varepsilon
_{n}},u_{\varepsilon _{n}}^{\prime })\text{ is }~\pi ^{\varepsilon _{n}}
\label{2.1}
\end{equation}%
\begin{equation}
\text{the probability law of }(\overline{W},u_{0},w)\text{ is }\pi ,
\label{2.2}
\end{equation}%
\begin{equation}
W^{\varepsilon _{n}}\rightarrow \overline{W}\text{ in }\mathcal{C}(0,T;%
\mathcal{U}_{0})~\overline{\text{ }\mathbb{P}}\text{-}a.s.,  \label{2.3}
\end{equation}%
\begin{equation}
u_{\varepsilon _{n}}\rightarrow \text{$u_{0}$ in }L^{2}(0,T;L^{2}(Q))\cap 
\mathcal{C}(0,T;H^{-1}(Q))\text{ }\overline{\mathbb{P}}\text{-}a.s.,
\label{2.4}
\end{equation}%
\begin{equation}
u_{\varepsilon _{n}}^{\prime }\rightarrow w\text{ in }\mathcal{C}%
(0,T;H^{-1}(Q))\text{ }\overline{\mathbb{P}}\text{-}a.s.  \label{2.5}
\end{equation}%
Arguing as in \cite{Sango}, we hereby note that $u_{\varepsilon
_{n}}^{\prime }$ is in fact a modification of the time derivative of $%
u_{\varepsilon _{n}}$. In fact let $v^{\varepsilon _{n}}$ be the time
derivative of $u_{\varepsilon _{n}}$ and write formally 
\begin{align}
u_{\varepsilon }(t)-u_{\varepsilon }(s)& =\int_{s}^{t}u_{\varepsilon
}^{\prime }(r)dr,  \notag \\
u_{\varepsilon _{n}}(t)-u_{\varepsilon _{n}}(s)& =\int_{s}^{t}v^{\varepsilon
_{n}}(r)dr,\ 0\leq s\leq t\leq T.  \notag
\end{align}%
In view of the fact that $(u_{\varepsilon },u_{\varepsilon _{n}}^{\prime })$
and $(u_{\varepsilon },u_{\varepsilon }^{\prime })$ have the same law, we
readily get, thanks to Fubini's theorem, the relation 
\begin{equation*}
\int_{s}^{t}\int_{A}u_{\varepsilon _{n}}^{\prime }(r)d\overline{\mathbb{P}}%
=\int_{s}^{t}\int_{A}v^{\varepsilon _{n}}(r)d\overline{\mathbb{P}}
\end{equation*}%
for any $A\in \overline{\mathcal{F}}$. Differentiating with respect to $t$,
we arrive at 
\begin{equation*}
\int_{A}u_{\varepsilon _{n}}^{\prime }(t)d\overline{\mathbb{P}}%
=\int_{A}v^{\varepsilon _{n}}(t)d\overline{\mathbb{P}}
\end{equation*}%
for any $t\in \lbrack 0,T]$. Thus $u_{\varepsilon _{n}}^{\prime
}=v^{\varepsilon _{n}}$ $\overline{\mathbb{P}}$-a.s..

We can see that $\{W^{\varepsilon _{n}}:\varepsilon _{n}\}$ is a sequence of
cylindrical Brownian motions evolving on $\mathcal{U}$. We let $\overline{%
\mathcal{F}}_{t}$ be the $\sigma $-algebra generated by $(\overline{W}%
(s),u_{0}(s),w(s))\text{, for }0\leq s\leq t$ and the null sets of $%
\overline{\mathcal{F}}$. We can show by arguing as in \cite{Bensoussan1}
that $\overline{W}$ is an $\overline{\mathcal{F}}_{t}$-adapted cylindrical
Wiener process evolving on $\mathcal{U}$. By the same argument as in \cite%
{Bensoussan2}, we can show that 
\begin{align}
& \left( u_{\varepsilon _{n}}^{\prime },\phi \right) +\int_{0}^{t}\left(
A_{0}^{\varepsilon _{n}}\nabla u_{\varepsilon _{n}}(\tau ),\nabla \phi
\right) d\tau =(u^{1},\phi )+\int_{0}^{t}\left( f\left( \frac{x}{\varepsilon
_{n}},\frac{\tau }{\varepsilon _{n}},u_{\varepsilon _{n}}(\tau )\right)
,\phi \right) d\tau   \notag \\
& +\left( \int_{0}^{t}g\left( \frac{x}{\varepsilon _{n}},\frac{\tau }{%
\varepsilon _{n}},u_{\varepsilon _{n}}(\tau )\right) dW_{\varepsilon
_{n}}(\tau ),\phi \right) ,  \label{ma}
\end{align}%
holds for $\phi \in H_{0}^{1}(Q)$ and for almost all $(\bar{\omega},t)\in 
\overline{\Omega }\times \lbrack 0,T]$.

Now, we derive a priori estimates for the sequences $u_{\varepsilon _{n}}$
and $u_{\varepsilon _{n}}^{\prime }$ obtained from the application of
Prokhorov and Skorokhod's compactness results. We know that they satisfy (%
\ref{ma}). Therefore they satisfy the a priori estimates corresponding to $%
u_{\varepsilon }$ and $u_{\varepsilon }^{\prime }$. Namely 
\begin{align}
& \bar{\mathbb{E}}\sup_{0\leq s\leq T}\Vert u_{\varepsilon _{n}}(s)\Vert
_{H_{0}^{1}(Q)}^{4}\leq C;~~\bar{\mathbb{E}}\sup_{0\leq s\leq T}\Vert
u_{\varepsilon _{n}}^{\prime }(s)\Vert _{L^{2}(Q)}^{4}\leq C,  \label{de3} \\
& \bar{\mathbb{E}}\left\vert u_{\varepsilon _{n}}^{\prime
}(t)-\int_{0}^{t}g\left( \frac{x}{\varepsilon _{n}},\frac{\tau }{\varepsilon
_{n}},u_{\varepsilon _{n}}(\tau )\right) dW^{\varepsilon _{n}}(\tau
)\right\vert _{W^{1,2}([0,T];L^{2}(Q))}^{2}\leq C,  \notag \\
& \bar{\mathbb{E}}\left\vert \int_{0}^{t}g\left( \frac{x}{\varepsilon _{n}},%
\frac{\tau }{\varepsilon _{n}},u_{\varepsilon _{n}}(\tau )\right)
dW^{\varepsilon _{n}}(\tau )\right\vert _{W^{\alpha
,4}([0,T];L^{2}(Q))}^{4}\leq C,
\end{align}%
for $\alpha \in \lbrack 0,\frac{1}{2})$ and for all $t\geq 0$.

Thus modulo extraction of a new subsequence (keeping the same notations) we
have 
\begin{align}
& u_{\varepsilon_{n}}\rightarrow\text{$u_{0}$}~\text{weak~ star}~\text{in}%
~L^{2}\left( \overline{\Omega},L^{\infty}(0,T;H_{0}^{1}(Q))\right) ,  \notag
\\
& u_{\varepsilon_{n}}^{\prime}\rightarrow w=u_{0}^{\prime}~\text{weak~ star}~%
\text{in}~L^{2}\left( \overline{\Omega},L^{\infty}(0,T;H^{-1}(Q))\right) , 
\notag
\end{align}
where $u_{0}^{\prime}$ is the time derivative of $u_{0}$. Next by (\ref{2.4}%
), (\ref{2.5}), (\ref{de3}) and Vitali's theorem, we have 
\begin{equation}
u_{\varepsilon_{n}}\rightarrow\text{$u_{0}$ in }L^{2}(\overline{\Omega}%
;L^{2}(0,T;L^{2}(Q)))   \label{2.6}
\end{equation}%
\begin{equation}
u_{\varepsilon_{n}}^{\prime}\rightarrow\text{$u_{0}^{\prime}$ in }L^{2}(%
\overline{\Omega};L^{2}(0,T;H^{-1}(Q))).   \label{2.6'}
\end{equation}
Hence for almost all $(\omega,t)\in\overline{\Omega}\times\lbrack0,T]$, we
get 
\begin{align}
u_{\varepsilon_{n}} & \rightarrow\text{$u_{0}$ in }L^{2}(Q),  \label{de4} \\
u_{\varepsilon_{n}}^{\prime} & \rightarrow\text{$u_{0}^{\prime}$ in }%
H^{-1}(Q)   \label{de5}
\end{align}
with respect to the measure $d\overline{\mathbb{P}}\otimes dt$.

\section{Sigma-convergence for stochastic processes\label{sec3}}

The concept of sigma-convergence relies on the notion of algebra with mean
value. Before we can state it, let us first and foremost\ set some
prerequisites about algebras with mean value.

\subsection{Algebras with mean value\label{subsec2.1}}

Let $\mathrm{BUC}(\mathbb{R}^{N})$ denote the Banach algebra of bounded
uniformly continuous real-valued functions defined on $\mathbb{R}^{N}$. For $%
u\in\mathrm{BUC}(\mathbb{R}^{N})$ we set 
\begin{equation*}
u_{R}=\frac{1}{\left\vert B_{R}\right\vert }\int_{B_{R}}u(y)dy 
\end{equation*}
where $B_{R}$ stands for the open ball in $\mathbb{R}^{N}$ of radius $R$
centered at the origin. We say that the function $u$ has a \textit{mean value%
} if the limit $\lim_{R\rightarrow\infty}u_{R}$ exists in $\mathbb{R}$. We
set 
\begin{equation}
M(u)=\lim_{R\rightarrow\infty}u_{R}\equiv\lim_{R\rightarrow\infty}\frac {1}{%
\left\vert B_{R}\right\vert }\int_{B_{R}}u(y)dy.   \label{3.0'}
\end{equation}
Let $u\in\mathrm{BUC}(\mathbb{R}^{N})$ and assume that $M(u)$ exists. Then 
\begin{equation}
u^{\varepsilon}\rightarrow M(u)\text{ in }L^{\infty}(\mathbb{R}^{N})\text{%
-weak}\ast\text{ as }\varepsilon\rightarrow0   \label{3.1'}
\end{equation}
where $u^{\varepsilon}\in\mathrm{BUC}(\mathbb{R}^{N})$ is defined by $%
u^{\varepsilon}(x)=u(x/\varepsilon)$ for $x\in\mathbb{R}^{N}$. This is an
easy consequence of the fact that the set of finite linear combinations of
the characteristic functions of open balls in $\mathbb{R}^{N}$ is dense in $%
L^{1}(\mathbb{R}^{N})$.

This being so, a closed subalgebra $A$ of $\mathrm{BUC}(\mathbb{R}^{N})$ is
said to be an algebra with mean value (algebra wmv, in short) on $\mathbb{R}%
^{N}$ if it contains the constants, is translation invariant ($%
\tau_{a}u=u(\cdot+a)\in A$ for any $u\in A$ and $a\in\mathbb{R}^{N}$) and
any of its elements possesses a mean value in the sense of (\ref{3.0'}).

To an algebra wmv $A$ are associated its regular subalgebras $A^{m}=\{\psi
\in\mathcal{C}^{m}(\mathbb{R}^{N}):$ $D_{y}^{\alpha}\psi\in A$ $\forall
\alpha=(\alpha_{1},...,\alpha_{N})\in\mathbb{N}^{N}$ with $\left\vert
\alpha\right\vert \leq m\}$ (where $D_{y}^{\alpha}\psi=\frac{\partial
^{\left\vert \alpha\right\vert }\psi}{\partial y_{1}^{\alpha_{1}}\cdot
\cdot\cdot\partial y_{N}^{\alpha_{N}}}$). Under the norm $\left\Vert
\left\vert u\right\vert \right\Vert _{m}=\sup_{\left\vert \alpha\right\vert
\leq m}\left\Vert D_{y}^{\alpha}\psi\right\Vert _{\infty}$, $A^{m}$ is a
Banach space. We also define the space $A^{\infty}=\{\psi\in\mathcal{C}%
^{\infty}(\mathbb{R}^{N}):$ $D_{y}^{\alpha}\psi\in A$ $\forall\alpha
=(\alpha_{1},...,\alpha_{N})\in\mathbb{N}^{N}\}$, a Fr\'{e}chet space when
endowed with the locally convex topology defined by the family of norms $%
\left\Vert \left\vert \cdot\right\vert \right\Vert _{m}$.

The concept of a product algebra wmv will be useful in our study. Let $A_{y}$
(resp. $A_{\tau}$) be an algebra wmv on $\mathbb{R}_{y}^{N}$ (resp. $\mathbb{%
R}_{\tau}$). We define the product algebra wmv $A_{y}\odot A_{\tau} $ as the
closure in $\mathrm{BUC}(\mathbb{R}^{N+1})$ of the tensor product $%
A_{y}\otimes A_{\tau}=\{\sum_{\text{finite}}u_{i}\otimes v_{i}:u_{i}\in A_{y}
$ and $v_{i}\in A_{\tau}\}$. This defines an algebra wmv on $\mathbb{R}^{N+1}
$.

We also define the notion of vector-valued algebra with mean value. Indeed,
let $F$ be a Banach space. We denote by \textrm{BUC}$(\mathbb{R}^{N};F)$ the
Banach space of bounded uniformly continuous functions $u:\mathbb{R}%
^{N}\rightarrow F$, endowed with the norm 
\begin{equation*}
\left\Vert u\right\Vert _{\infty}=\sup_{y\in\mathbb{R}^{N}}\left\Vert
u(y)\right\Vert _{F}
\end{equation*}
where $\left\Vert \cdot\right\Vert _{F}$ stands for the norm in $F$. Let $A$
be an algebra with mean value on $\mathbb{R}^{N}$. We denote by $A\otimes F$
the usual space of functions of the form 
\begin{equation*}
\sum_{\text{finite}}u_{i}\otimes e_{i}\text{ with }u_{i}\in A\text{ and }%
e_{i}\in F 
\end{equation*}
where $(u_{i}\otimes e_{i})(y)=u_{i}(y)e_{i}$ for $y\in\mathbb{R}^{N}$. With
this in mind, we define the vector-valued algebra wmv $A(\mathbb{R}^{N};F)$
as the closure of $A\otimes F$ in \textrm{BUC}$(\mathbb{R}^{N};F)$, and we
can check that $A_{y}\odot A_{\tau}=A_{y}(\mathbb{R}^{N};A_{\tau})=A_{\tau }(%
\mathbb{R};A_{y})$.

\bigskip Now, let $f\in A(\mathbb{R}^{N};F)$. Then, defining $\left\Vert
f\right\Vert _{F}$ by $\left\Vert f\right\Vert _{F}(y)=\left\Vert
f(y)\right\Vert _{F}$ ($y\in\mathbb{R}^{N}$), we have that $\left\Vert
f\right\Vert _{F}\in A$. Similarly we can define (for $0<p<\infty$) the
function $\left\Vert f\right\Vert _{F}^{p}$ and $\left\Vert f\right\Vert
_{F}^{p}\in A$. This allows us to define the Besicovitch seminorm on $A(%
\mathbb{R}^{N};F)$ as follows: for $1\leq p<\infty$, 
\begin{equation*}
\left\Vert f\right\Vert _{p,F}=\left( \lim_{R\rightarrow\infty}\frac {1}{%
\left\vert B_{R}\right\vert }\int_{B_{R}}\left\Vert f(y)\right\Vert
_{F}^{p}dy\right) ^{\frac{1}{p}}\equiv\left( M(\left\Vert f\right\Vert
_{F}^{p})\right) ^{\frac{1}{p}}\text{ for }f\in A(\mathbb{R}^{N};F) 
\end{equation*}
where $B_{R}$ is the open ball in $\mathbb{R}^{N}$ centered at the origin
and of radius $R$. Next, we define the Besicovitch space $B_{A}^{p}(\mathbb{R%
}^{N};F)$ as the completion of $A(\mathbb{R}^{N};F)$ with respect to $%
\left\Vert \cdot\right\Vert _{p,F}$. The space $B_{A}^{p}(\mathbb{R}^{N};F)$
is a complete seminormed subspace of $L_{loc}^{p}(\mathbb{R}^{N};F)$, and
the following hold true:

\begin{itemize}
\item[(\textbf{1)}] The space $\mathcal{B}_{A}^{p}(\mathbb{R}%
^{N};F)=B_{A}^{p}(\mathbb{R}^{N};F)/\mathcal{N}$ (where $\mathcal{N}=\{u\in
B_{A}^{p}(\mathbb{R}^{N};F):\left\Vert u\right\Vert _{p,F}=0\}$) is a Banach
space under the norm $\left\Vert u+\mathcal{N}\right\Vert _{p,F}=\left\Vert
u\right\Vert _{p,F}$ for $u\in B_{A}^{p}(\mathbb{R}^{N};F)$.

\item[(\textbf{2)}] The mean value $M:A(\mathbb{R}^{N};F)\rightarrow F$
extends by continuity to a continuous linear mapping (still denoted by $M$)
on $B_{A}^{p}(\mathbb{R}^{N};F)$ satisfying 
\begin{equation*}
L(M(u))=M(L(u))\text{ for all }L\in F^{\prime}\text{ and }u\in B_{A}^{p}(%
\mathbb{R}^{N};F). 
\end{equation*}
Moreover, for $u\in B_{A}^{p}(\mathbb{R}^{N};F)$ we have 
\begin{equation*}
\left\Vert u\right\Vert _{p,F}=\left[ M(\left\Vert u\right\Vert _{F}^{p})%
\right] ^{1/p}\equiv\left[ \lim_{R\rightarrow\infty}\frac{1}{\left\vert
B_{R}\right\vert }\int_{B_{R}}\left\Vert u(y)\right\Vert _{F}^{p}dy\right] ^{%
\frac{1}{p}}, 
\end{equation*}
and for $u\in\mathcal{N}$ one has $M(u)=0$.
\end{itemize}

It is to be noted that $\mathcal{B}_{A}^{2}(\mathbb{R}^{N};H)$ (when $F=H$
is a Hilbert space) is a Hilbert space with inner product 
\begin{equation}
\left( u,v\right) _{2}=M\left[ \left( u,v\right) _{H}\right] \text{ for }%
u,v\in\mathcal{B}_{A}^{2}(\mathbb{R}^{N};H),   \label{1.5}
\end{equation}
$(~,~)_{H}$ denoting the inner product in $H$.

Let us pay attention to the special case $F=\mathbb{R}$ for which $B_{A}^{p}(%
\mathbb{R}^{N}):=B_{A}^{p}(\mathbb{R}^{N};\mathbb{R})$ and $\mathcal{B}%
_{A}^{p}(\mathbb{R}^{N}):=\mathcal{B}_{A}^{p}(\mathbb{R}^{N};\mathbb{R})$.
The mean value extends in a natural way to $\mathcal{B}_{A}^{p}(\mathbb{R}%
^{N})$ as follows: for $u=v+\mathcal{N}\in\mathcal{B}_{A}^{p}(\mathbb{R}^{N})
$, we set $M(u):=M(v)$; this is well-defined since $M(v)=0$ for any $v\in 
\mathcal{N}$. The Besicovitch seminorm in $B_{A}^{p}(\mathbb{R}^{N})$ is
merely denoted by $\left\Vert \cdot\right\Vert _{p}$, and we have $B_{A}^{q}(%
\mathbb{R}^{N})\subset B_{A}^{p}(\mathbb{R}^{N})$ for $1\leq p\leq q<\infty$%
. From this last property one may naturally define the space $B_{A}^{\infty}(%
\mathbb{R}^{N})$ as follows: 
\begin{equation*}
B_{A}^{\infty}(\mathbb{R}^{N})=\{f\in\cap_{1\leq p<\infty}B_{A}^{p}(\mathbb{R%
}^{N}):\sup_{1\leq p<\infty}\left\Vert f\right\Vert _{p}<\infty\}\text{.}%
\;\;\;\;\;\;\;\;\; 
\end{equation*}
We endow $B_{A}^{\infty}(\mathbb{R}^{N})$ with the seminorm $\left[ f\right]
_{\infty}=\sup_{1\leq p<\infty}\left\Vert f\right\Vert _{p}$, which makes it
a complete seminormed space.

For $u=v+\mathcal{N}\in\mathcal{B}_{A}^{p}(\mathbb{R}^{N})$ ($1\leq
p\leq\infty$) and $y\in\mathbb{R}^{N}$, we define in a natural way the
translate $\tau_{y}u=v(\cdot+y)+\mathcal{N}$ of $u$, and as it can be seen
in \cite{NA2014, Deterhom}, this is well defined and induces a strongly
continuous $N$-parameter group of isometries $T(y):\mathcal{B}_{A}^{p}(%
\mathbb{R}^{N})\rightarrow\mathcal{B}_{A}^{p}(\mathbb{R}^{N})$ defined by $%
T(y)u=\tau_{y}u$. We denote by $\overline{\partial}/\partial y_{i}$ ($1\leq
i\leq N$) the infinitesimal generator of $T(y)$ along the $i$th coordinate
direction. We refer the reader to \cite{NA2014, Deterhom} for the properties
of $\overline{\partial}/\partial y_{i}$.

Now, let 
\begin{equation*}
\mathcal{B}_{A}^{1,p}(\mathbb{R}^{N})=\{u\in\mathcal{B}_{A}^{p}(\mathbb{R}%
^{N}):\frac{\overline{\partial}u}{\partial y_{i}}\in\mathcal{B}_{A}^{p}(%
\mathbb{R}^{N})\ \forall1\leq i\leq N\} 
\end{equation*}
and 
\begin{equation*}
\mathcal{D}_{A}(\mathbb{R}^{N})=\varrho(A^{\infty}) 
\end{equation*}
where $\varrho$ is the canonical mapping of $B_{A}^{p}(\mathbb{R}^{N})$ into 
$\mathcal{B}_{A}^{p}(\mathbb{R}^{N})=B_{A}^{p}(\mathbb{R}^{N})/\mathcal{N}$
defined by $\varrho(u)=u+\mathcal{N}$ ($\mathcal{N}$ being defined above).
It can be shown that $\mathcal{D}_{A}(\mathbb{R}^{N})$ is dense in $\mathcal{%
B}_{A}^{p}(\mathbb{R}^{N})$, $1\leq p<\infty$. We also have that $\mathcal{B}%
_{A}^{1,p}(\mathbb{R}^{N})$ is a Banach space under the norm 
\begin{equation*}
\left\Vert u\right\Vert _{\mathcal{B}_{A}^{1,p}}=\left( \left\Vert
u\right\Vert _{p}^{p}+\sum_{i=1}^{N}\left\Vert \frac{\overline{\partial}u}{%
\partial y_{i}}\right\Vert _{p}^{p}\right) ^{1/p}\ \ (u\in\mathcal{B}%
_{A}^{1,p}(\mathbb{R}^{N})) 
\end{equation*}
admitting $\mathcal{D}_{A}(\mathbb{R}^{N})$ as a dense subspace.

We end this first part with a further notion which will lead to the
definition of the space of correctors. A function $u\in\mathcal{B}_{A}^{1}(%
\mathbb{R}^{N})$ is said to be \emph{invariant} if for any $y\in\mathbb{R}%
^{N}$, $T(y)u=u$. It is immediate that the above notion of invariance is the
well-known one relative to dynamical systems. Therefore, an algebra with
mean value will be said to be \emph{ergodic} if every invariant function $u$
is constant in $\mathcal{B}_{A}^{1}(\mathbb{R}^{N})$. As in \cite{BMW} one
may show that $u\in\mathcal{B}_{A}^{1}$ is invariant if and only if $%
\overline {\partial}u/\partial y_{i}=0$ for all $1\leq i\leq N$. We denote
by $I_{A}^{p}(\mathbb{R}^{N})$ the set of $u\in\mathcal{B}_{A}^{p}(\mathbb{R}%
^{N})$ that are invariant. The set $I_{A}^{p}(\mathbb{R}^{N})$ is a closed
vector subspace of $\mathcal{B}_{A}^{p}(\mathbb{R}^{N})$ satisfying the
following important property:%
\begin{equation}
u\in I_{A}^{p}(\mathbb{R}^{N})\text{ if and only if }\frac{\overline{%
\partial }u}{\partial y_{i}}=0\text{ for all }1\leq i\leq N\text{.} 
\label{3.2'}
\end{equation}

We define $\mathcal{B}_{\#A}^{1,p}(\mathbb{R}^{N})$ to be the completion of $%
\mathcal{B}_{A}^{1,p}(\mathbb{R}^{N})/I_{A}^{p}(\mathbb{R}^{N})$ with
respect to the norm of the gradient 
\begin{equation*}
\left\Vert u\right\Vert _{\#,p}=\left\Vert \overline{\nabla}_{y}u\right\Vert
_{p}:=\left( \sum_{i=1}^{N}\left\Vert \frac{\overline{\partial}u}{\partial
y_{i}}\right\Vert _{p}^{p}\right) ^{1/p}\ \ \text{for }u\in\mathcal{B}%
_{A}^{1,p}(\mathbb{R}^{N})/I_{A}^{p}(\mathbb{R}^{N}), 
\end{equation*}
which makes it a Banach space. Moreover $\mathcal{B}_{\#A}^{1,p}(\mathbb{R}%
^{N})$ is reflexive ($1<p<\infty$) and further, the space $\{u\in\mathcal{D}%
_{A}(\mathbb{R}^{N})/I_{A}^{p}(\mathbb{R}^{N}):M(u)=0\}$ is dense in $%
\mathcal{B}_{\#A}^{1,p}(\mathbb{R}^{N})$.

\begin{remark}
\label{r3.1}\emph{Assume that the algebra wmv }$A$\emph{\ is ergodic. Then }$%
I_{A}^{p}(\mathbb{R}^{N})=\mathbb{R}$. Hence 

\begin{itemize}
\item[(1)] $\mathcal{B}_{A}^{1,p}(\mathbb{R}^{N})/I_{A}^{p}(\mathbb{R}%
^{N})=\{u\in\mathcal{B}_{A}^{1,p}(\mathbb{R}^{N}):M(u)=0\}$\emph{;}

\item[(2)] \emph{we may, instead of }$\mathcal{B}_{\#A}^{1,p}(\mathbb{R}^{N})
$\emph{, rather consider its dense subspace }$B_{\#A}^{1,p}(\mathbb{R}%
^{N})=\{u\in W_{loc}^{1,p}(\mathbb{R}^{N}):M(\nabla u)=0\}$\emph{\ equipped
with the gradient seminorm }$\left\Vert u\right\Vert _{B_{\#A}^{1,p}(\mathbb{%
R}^{N})}=\left\Vert \nabla _{y}u\right\Vert _{p}$\emph{. It can be seen from 
\cite[Theorem 3.12]{Casado} that any function in }$\mathcal{B}_{\#A}^{1,p}(%
\mathbb{R}^{N})$\emph{\ is an equivalence class of an element in }$%
B_{\#A}^{1,p}(\mathbb{R}^{N})$\emph{\ in the sense that two elements in }$%
B_{\#A}^{1,p}(\mathbb{R}^{N})$\emph{\ are identified by their gradients: }$%
u=v$\emph{\ in }$B_{\#A}^{1,p}(\mathbb{R}^{N})$\emph{\ if and only if }$%
\nabla u=\nabla v$\emph{\ in }$B_{A}^{p}(\mathbb{R}^{N})$\emph{, i.e., }$%
\left\Vert \nabla _{y}(u-v)\right\Vert _{p}=0$\emph{. As we shall see later
on, the latter space will be more convenient in practice.}
\end{itemize}
\end{remark}

\bigskip

For $u\in\mathcal{B}_{A}^{p}(\mathbb{R}^{N})$ (resp. $v=(v_{1},...,v_{N})\in(%
\mathcal{B}_{A}^{p}(\mathbb{R}^{N}))^{N}$), we define the gradient operator $%
\overline{\nabla}_{y}$ and the divergence operator $\overline {\nabla}%
_{y}\cdot$ by 
\begin{equation*}
\overline{\nabla}_{y}u:=\left( \frac{\overline{\partial}u}{\partial y_{1}}%
,...,\frac{\overline{\partial}u}{\partial y_{N}}\right) \text{ and\ }%
\overline{\nabla}_{y}\cdot v\equiv\overline{\Div}_{y}v:=\sum_{i=1}^{N}\frac{%
\overline{\partial}u}{\partial y_{i}}. 
\end{equation*}
Then\ the divergence operator sends continuously and linearly $(\mathcal{B}%
_{A}^{p^{\prime}}(\mathbb{R}^{N}))^{N}$ into $(\mathcal{B}_{A}^{1,p}(\mathbb{%
R}^{N}))^{\prime}$ and satisfies 
\begin{equation}
\left\langle \overline{\Div}_{y}u,v\right\rangle =-\left\langle u,\overline {%
\nabla}_{y}v\right\rangle \text{\ for }v\in\mathcal{B}_{A}^{1,p}(\mathbb{R}%
^{N})\text{ and }u=(u_{i})\in(\mathcal{B}_{A}^{p^{\prime}}(\mathbb{R}%
^{N}))^{N}\text{,}   \label{00}
\end{equation}
where $\left\langle u,\overline{\nabla}_{y}v\right\rangle
:=\sum_{i=1}^{N}M\left( u_{i}\frac{\overline{\partial}v}{\partial y_{i}}%
\right) $.

\subsection{Sigma-convergence for stochastic processes}

We follow here the presentation made in \cite{SAA13}. For the results stated
here, the reader is referred to \cite{SAA13} for the proofs and other
comments. However, for the sake of completeness, we recall some facts that
have been presented in the above mentioned work.

In all that follows, $Q$ is an open subset of $\mathbb{\mathbb{R}}^{N}$
(integer $N\geq1$), $T$ is a positive real number and $Q_{T}=Q\times(0,T)$.
Let $(\Omega,\mathcal{F},\mathbb{P})$ be a probability space. The
expectation on $(\Omega,\mathcal{F},\mathbb{P})$ will throughout be denoted
by $\mathbb{E}$. Let us first recall the definition of the Banach space of
bounded $\mathcal{F}$-measurable functions. Denoting by $F(\Omega)$ the
Banach space of all bounded functions $f:\Omega\rightarrow\mathbb{R}$ (with
the sup norm), we define $B(\Omega)$ as the closure in $F(\Omega)$ of the
vector space $H(\Omega)$ consisting of all finite linear combinations of the
characteristic functions $1_{X}$ of sets $X\in\mathcal{F}$. Since $\mathcal{F%
}$ is an $\sigma$-algebra, $B(\Omega)$ is the Banach space of all bounded $%
\mathcal{F}$-measurable functions. Likewise we define the space $B(\Omega;Z)$
of all bounded $(\mathcal{F},B_{Z})$-measurable functions $%
f:\Omega\rightarrow Z$, where $Z$ is a Banach space endowed with the $\sigma$%
-algebra of Borelians $B_{Z}$. The tensor product $B(\Omega)\otimes Z$ is a
dense subspace of $B(\Omega;Z)$: this follows from the obvious fact that $%
B(\Omega)$ can be viewed as a space of continuous functions over the \textit{%
gamma-compactification} \cite{Zhdanok} of the measurable space $(\Omega,%
\mathcal{F})$, which is a compact topological space. Next, for $X$ a Banach
space, we denote by $L^{p}(\Omega,\mathcal{F},\mathbb{P};X)$ the space of $X$%
-valued random variables $u$ such that $\left\Vert u\right\Vert _{X}$ is $%
L^{p}(\Omega,\mathcal{F},\mathbb{P})$-integrable.

Now, let $A_{y}$ and $A_{\tau}$ be two algebras wmv on $\mathbb{R}_{y}^{N}$
and $\mathbb{R}_{\tau}$ respectively, and let $A=A_{y}\odot A_{\tau}$ be
their product. We know that $A$ is the closure in $\mathrm{BUC}(\mathbb{R}%
_{y,\tau }^{N+1})$ of the tensor product $A_{y}\otimes A_{\tau}$. Points in $%
\Omega$ are as usual denoted by $\omega$. The generic element of $Q_{T}$ is
denoted by $(x,t)$ while any function in $A_{y}$ (resp. $A_{\tau}$ and $A $)
is of variable $y\in\mathbb{R}^{N}$ (resp. $\tau\in\mathbb{R}$ and $(y,\tau
)\in\mathbb{R}^{N+1}$). The mean value over $A_{y}$, $A_{\tau}$ and $A$ is
denoted by the same letter $M$. However, we shall often write $M_{y}$ (resp. 
$M_{\tau}$ and $M_{y,\tau}$) to differentiate them if there is any danger of
confusion. For a function $u\in L^{p}(\Omega,\mathcal{F},\mathbb{P}%
;L^{p}(Q_{T};\mathcal{B}_{A}^{p}(\mathbb{R}^{N+1})))$ we denote by $%
u(x,t,\cdot,\omega)$ (for any fixed $(x,t,\omega)\in Q_{T}\times\Omega$) the
function defined by 
\begin{equation*}
u(x,t,\cdot,\omega)(y,\tau)=u(x,t,y,\tau,\omega)\text{ for }(y,\tau )\in%
\mathbb{R}^{N+1}. 
\end{equation*}
Then $u(x,t,\cdot,\omega)\in\mathcal{B}_{A}^{p}(\mathbb{R}^{N+1})$, so that
the mean value of $u(x,t,\cdot,\omega)$ is defined accordingly.

Unless otherwise stated, random variables will always be considered on the
probability space $(\Omega,\mathcal{F},\mathbb{P})$. Finally, the letter $E$
will throughout denote exclusively an ordinary sequence $(\varepsilon
_{n})_{n\in\mathbb{N}}$ with $0<\varepsilon_{n}\leq1$ and $\varepsilon
_{n}\rightarrow0$ as $n\rightarrow\infty$. In what follows, the notations
are those of the preceding section.

\begin{definition}
\label{d3.1}\emph{A sequence of random variables }$(u_{\varepsilon
})_{\varepsilon >0}\subset L^{p}(\Omega ,\mathcal{F},\mathbb{P};L^{p}(Q_{T}))
$\emph{\ (}$1\leq p<\infty $\emph{) is said to }weakly $\Sigma $-converge%
\emph{\ in }$L^{p}(Q_{T}\times \Omega )$\emph{\ to some random variable }$%
u_{0}\in L^{p}(\Omega ,\mathcal{F},\mathbb{P};L^{p}(Q_{T};\mathcal{B}%
_{A}^{p}(\mathbb{R}^{N+1})))$\emph{\ if as }$\varepsilon \rightarrow 0$\emph{%
, we have } 
\begin{equation}
\begin{array}{l}
\iint_{Q_{T}\times \Omega }u_{\varepsilon }(x,t,\omega )v\left( x,t,\frac{x}{%
\varepsilon },\frac{t}{\varepsilon },\omega \right) dxdtd\mathbb{P} \\ 
\ \ \ \ \ \ \ \ \ \ \ \ \ \ \rightarrow \iint_{Q_{T}\times \Omega }M\left(
u_{0}(x,t,\cdot ,\omega )f(x,t,\cdot ,\omega )\right) dxdtd\mathbb{P}%
\end{array}
\label{3.1}
\end{equation}%
\emph{for every }$v\in L^{p^{\prime }}(\Omega ,\mathcal{F},\mathbb{P}%
;L^{p^{\prime }}(Q_{T};A))$\emph{\ (}$1/p^{\prime }=1-1/p$\emph{). We
express this by writing} $u_{\varepsilon }\rightarrow u_{0}$ in $%
L^{p}(Q_{T}\times \Omega )$-weak $\Sigma $.
\end{definition}

\begin{remark}
\label{r3.0}\emph{The above weak }$\Sigma $\emph{-convergence in }$%
L^{p}(Q_{T}\times \Omega )$\emph{\ implies the weak convergence in }$%
L^{p}(Q_{T}\times \Omega )$\emph{. One can show as in the usual setting of }$%
\Sigma $\emph{-convergence method \cite{Hom1} that each }$v\in L^{p}(\Omega ,%
\mathcal{F},\mathbb{P};L^{p}(Q_{T};A))$\emph{\ weakly }$\Sigma $\emph{%
-converges to }$\varrho \circ v$\emph{.}
\end{remark}

In order to simplify the notation, we will henceforth denote $L^{p}(\Omega,%
\mathcal{F},\mathbb{P};X)$ merely by $L^{p}(\Omega;X)$ if it is understood
from the context and there is no danger of confusion.

The following results can be found in \cite{SAA13} (see especially Theorems
2, 3 and 4 therein).

\begin{theorem}
\label{t3.1}Let $1<p<\infty$. Let $(u_{\varepsilon})_{\varepsilon\in
E}\subset L^{p}(\Omega;L^{p}(Q_{T}))$ be a sequence of random variables
verifying the following boundedness condition: 
\begin{equation*}
\sup_{\varepsilon\in E}\mathbb{E}\left\Vert u_{\varepsilon}\right\Vert
_{L^{p}(Q_{T})}^{p}<\infty. 
\end{equation*}
Then there exists a subsequence $E^{\prime}$ from $E$ such that the sequence 
$(u_{\varepsilon})_{\varepsilon\in E^{\prime}}$ is weakly $\Sigma$%
-convergent in $L^{p}(Q_{T}\times\Omega)$.
\end{theorem}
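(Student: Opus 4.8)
The plan is to mimic the classical compactness proof for two-scale / $\Sigma$-convergence, adapting it to the probabilistic setting. The key point is that weak $\Sigma$-convergence is, by the very form of the defining integral \eqref{3.1}, a statement about the behaviour of a linear functional on the Banach space $\mathfrak{F}=L^{p'}(\Omega;L^{p'}(Q_T;A))$. So first I would fix a countable subset of the space $L^{p'}(\Omega,\mathcal{F},\mathbb{P};L^{p'}(Q_T;A))$ that is dense in it (this uses separability: $A$ is a separable Banach space since it is a closed subalgebra of a space of uniformly continuous functions and, via the gamma-compactification / spectrum, is separable; $L^{p'}(Q_T)$ is separable for the Lebesgue measure; and $L^{p'}(\Omega)$ may be taken separable, or one works with the separable subspace $B(\Omega)$ generated by a countable algebra of sets — in any case the tensor product $B(\Omega)\otimes L^{p'}(Q_T)\otimes A$ is separable and norm-dense in $\mathfrak{F}$). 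Call this countable dense family $\{v_j\}_{j\ge 1}$.

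Next, for each fixed $v_j$, consider the scalar sequence
\begin{equation*}
a_\varepsilon^{(j)}=\iint_{Q_T\times\Omega}u_\varepsilon(x,t,\omega)\,v_j\!\left(x,t,\tfrac{x}{\varepsilon},\tfrac{t}{\varepsilon},\omega\right)dx\,dt\,d\mathbb{P}.
\end{equation*}
By Hölder's inequality in the variables $(x,t,\omega)$ together with the uniform bound $\sup_{\varepsilon\in E}\mathbb{E}\|u_\varepsilon\|_{L^p(Q_T)}^p<\infty$ and the fact that $\|v_j(\cdot,\cdot,\tfrac{\cdot}{\varepsilon},\tfrac{\cdot}{\varepsilon},\cdot)\|_{L^{p'}(Q_T\times\Omega)}$ stays bounded as $\varepsilon\to 0$ (this is where one uses that $v_j\in L^{p'}(\Omega;L^{p'}(Q_T;A))$ with $A\subset\mathrm{BUC}$, so the sup-norm in $(y,\tau)$ controls everything and the oscillating argument does not inflate the norm), the sequence $(a_\varepsilon^{(j)})_{\varepsilon\in E}$ is bounded in $\mathbb{R}$. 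A diagonal extraction over $j$ then produces a single subsequence $E'\subset E$ along which $a_\varepsilon^{(j)}$ converges for every $j$; and by the density of $\{v_j\}$ and the uniform operator bound just described, $a_\varepsilon^{(v)}$ converges for \emph{every} $v\in\mathfrak{F}$. This defines a bounded linear functional $L$ on $\mathfrak{F}$.

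It then remains to represent this limit functional in the required form, i.e. to produce $u_0\in L^p(\Omega;L^p(Q_T;\mathcal{B}_A^p(\mathbb{R}^{N+1})))$ with $L(v)=\iint_{Q_T\times\Omega}M(u_0(x,t,\cdot,\omega)\,v(x,t,\cdot,\omega))\,dx\,dt\,d\mathbb{P}$. The clean way is to observe that $L$ is actually a bounded functional on $L^{p'}(\Omega;L^{p'}(Q_T;B_A^{p'}(\mathbb{R}^{N+1})))$: indeed by Remark \ref{r3.0} the test function $v$ itself weakly $\Sigma$-converges to $\varrho\circ v$, so $L(v)$ depends on $v$ only through its image in $\mathcal{B}_A^{p'}$, and the estimate above upgrades to $|L(v)|\le (\sup_\varepsilon\mathbb{E}\|u_\varepsilon\|_{L^p(Q_T)}^p)^{1/p}\,\|v\|_{L^{p'}(\Omega;L^{p'}(Q_T;\mathcal{B}_A^{p'}))}$ because $\|v^\varepsilon\|_{L^{p'}(Q_T\times\Omega)}\to\|\varrho\circ v\|_{L^{p'}(\Omega;L^{p'}(Q_T;\mathcal{B}_A^{p'}))}$ by the mean-value property. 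Since $1<p<\infty$, the space $L^{p'}(\Omega;L^{p'}(Q_T;\mathcal{B}_A^{p'}(\mathbb{R}^{N+1})))$ is reflexive with dual $L^p(\Omega;L^p(Q_T;\mathcal{B}_A^p(\mathbb{R}^{N+1})))$ (using that $\mathcal{B}_A^{p}$ is the dual of $\mathcal{B}_A^{p'}$, the Hilbert/Banach space structure recorded in the excerpt, and that the pairing is exactly $M[(\cdot,\cdot)]$), so Riesz representation yields the desired $u_0$ together with \eqref{3.1}. The main obstacle is precisely this last identification: one must be careful that the pairing delivered by duality is genuinely the mean-value pairing $\langle u_0,v\rangle=\iint M(u_0 v)$ and not some other realization, and that $u_0$ genuinely lives in the $\mathcal{B}_A^p$-valued Bochner space rather than merely in a bidual — this is handled by the density of $\mathcal{D}_A(\mathbb{R}^{N+1})$-valued simple functions and the reflexivity/separability facts assembled in Section \ref{sec3}. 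All the remaining steps (Hölder, diagonal extraction, density) are routine.
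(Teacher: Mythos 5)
The paper does not actually prove Theorem \ref{t3.1}: it is imported verbatim from \cite{SAA13} (Theorem 2 there), so there is no internal proof to compare against. Your strategy --- uniform boundedness of the functionals $v\mapsto\iint_{Q_T\times\Omega}u_\varepsilon v^\varepsilon$, diagonal extraction over a countable dense family of test functions, then identification of the limit functional inside $L^p(\Omega;L^p(Q_T;\mathcal{B}_A^p(\mathbb{R}^{N+1})))$ via the duality $(\mathcal{B}_A^{p'})'=\mathcal{B}_A^p$ under the mean-value pairing --- is the standard proof of this type of compactness statement and is essentially what the cited source does. You also correctly isolate the two nontrivial points: the limit functional factors through $\varrho$ and is bounded for the $\mathcal{B}_A^{p'}$-seminorm only because $\|v^\varepsilon\|_{L^{p'}(Q_T\times\Omega)}\to\|\varrho\circ v\|_{L^{p'}(Q_T\times\Omega;\mathcal{B}_A^{p'})}$ (Remark \ref{r3.2}), and reflexivity of the $\mathcal{B}_A^{p'}$-valued Bochner space for $1<p'<\infty$ then produces the representative $u_0$.

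The one step that is genuinely wrong as justified is the separability claim. A closed subalgebra of $\mathrm{BUC}(\mathbb{R}^{N+1})$ need not be separable, and the paper's main non-periodic example $A=AP(\mathbb{R}^{N+1})$ is not: its spectrum is the (non-metrizable) Bohr compactification, and even the quotient $\mathcal{B}_{AP}^{2}$ is a non-separable Hilbert space, containing the uncountable orthonormal system $\{\cos(\lambda\cdot),\sin(\lambda\cdot)\}_{\lambda}$. Without separability of $L^{p'}(\Omega;L^{p'}(Q_T;A))$ there is no countable dense family $\{v_j\}$ and the diagonal extraction cannot be performed; Banach--Alaoglu then yields only subnets, and you cannot fall back on Eberlein--\v{S}mulian in the reflexive space because for \emph{fixed} $\varepsilon$ the functional $T_\varepsilon$ does not factor through $\varrho$ (test it on $\varphi\otimes\psi$ with $M(|\psi|^{p'})=0$ but $\psi\neq0$, e.g.\ $\psi\in\mathcal{C}_0$) --- only the limit functional does. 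The standard repairs are either to assume $A$ separable outright (as in Nguetseng's H-algebras), or to run the extraction only over the separable closed subalgebra generated by the data $A_0$, $f$, $g$ and the countably many test functions actually needed, which suffices for Theorem \ref{t1.1}. You should say explicitly which of these you invoke; as written, that is the gap. The remaining steps (H\"older, the upgrade of the bound to the $\mathcal{B}_A^{p'}$-seminorm, the Riesz representation) are correct.
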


\begin{theorem}
\label{t3.2}Let $1<p<\infty$. Let $(u_{\varepsilon})_{\varepsilon\in
E}\subset L^{p}(\Omega;L^{p}(0,T;W_{0}^{1,p}(Q)))$ be a sequence of random
variables which satisfies the following estimate: 
\begin{equation*}
\sup_{\varepsilon\in E}\mathbb{E}\left\Vert u_{\varepsilon}\right\Vert
_{L^{p}(0,T;W^{1,p}(Q))}^{p}<\infty. 
\end{equation*}
Then there exist a subsequence $E^{\prime}$ of $E$ and a couple of random
variables $(u_{0},u_{1})$ with $u_{0}\in
L^{p}(\Omega;L^{p}(0,T;W_{0}^{1,p}(Q;I_{A}^{p}(\mathbb{R}_{y}^{N}))))$ and $%
u_{1}\in L^{p}(\Omega ;L^{p}(Q_{T};\mathcal{B}_{A_{\tau}}^{p}(\mathbb{R}%
_{\tau};\mathcal{B}_{\#A_{y}}^{1,p}(\mathbb{R}_{y}^{N}))))$ such that, as $%
E^{\prime}\ni\varepsilon\rightarrow0$, 
\begin{equation*}
u_{\varepsilon}\rightarrow u_{0}\ \text{in }L^{p}(Q_{T}\times\Omega )\text{%
-weak }\Sigma 
\end{equation*}
and%
\begin{equation}
\frac{\partial u_{\varepsilon}}{\partial x_{i}}\rightarrow\frac{\partial
u_{0}}{\partial x_{i}}+\frac{\overline{\partial}u_{1}}{\partial y_{i}}\text{%
\ in }L^{p}(Q_{T}\times\Omega)\text{-weak }\Sigma\text{, }1\leq i\leq N\text{%
.}   \label{3.2}
\end{equation}
\end{theorem}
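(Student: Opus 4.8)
The plan is to proceed in three stages: first extract the limit $u_0$ and a first-order "potential" $u_1$ using the compactness Theorem \ref{t3.1}, then identify the $\Sigma$-limits of the partial derivatives, and finally show that this limit has the decomposed form displayed in \eqref{3.2} with $u_1$ enjoying the claimed invariance/regularity properties.

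Here is the detail. By the boundedness assumption, Theorem \ref{t3.1} applied to $(u_\varepsilon)$ and to each $(\partial u_\varepsilon/\partial x_i)$ yields, along a common subsequence $E'$, random variables $u_0 \in L^p(\Omega;L^p(0,T;W^{1,p}_0(Q)))$ (the weak limit, using Remark \ref{r3.0} which says weak $\Sigma$-convergence implies ordinary weak convergence, plus the fact that $W^{1,p}_0(Q)$ is weakly closed) and functions $v_i \in L^p(\Omega;L^p(Q_T;\mathcal{B}_A^p(\mathbb{R}^{N+1})))$ such that $u_\varepsilon \to u_0$ and $\partial u_\varepsilon/\partial x_i \to v_i$ weak $\Sigma$. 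The first task is to see that $u_0$ does not depend on $(y,\tau)$, i.e. $u_0 \in I^p_A(\mathbb{R}^{N+1})$ as a function of the fast variables — in fact only on $y$, so $u_0 \in L^p(\Omega;L^p(0,T;W^{1,p}_0(Q;I^p_{A_y}(\mathbb{R}^N_y))))$. This follows by testing \eqref{3.1} against $v(x,t,y,\tau,\omega) = \varphi(x,t,\omega)\,\psi_1(y)\,\psi_2(\tau)$ with $M(\psi_1)=0$ or $M(\psi_2)=0$ and passing to the limit: since $u_\varepsilon$ converges weakly (no oscillation), the left side tends to $\iint u_0 \varphi \cdot M(\psi_1\psi_2) = 0$, forcing $M(u_0(x,t,\cdot,\omega)\psi_1\psi_2)=0$ for all such test functions, hence $u_0$ is invariant.

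Next, to identify $v_i$, I would use the standard correctors-type test function: choose $v(x,t,y,\tau,\omega) = \varepsilon\,\varphi(x,t,\omega)\,\psi(y,\tau)$ with $\varphi \in \mathcal{C}^\infty_0(Q_T)\otimes B(\Omega)$ and $\psi \in A^\infty$, compute
\begin{equation*}
\iint_{Q_T\times\Omega} \frac{\partial u_\varepsilon}{\partial x_i}\, \varepsilon\varphi\,\psi^\varepsilon\, dxdtd\mathbb{P} = -\iint_{Q_T\times\Omega} u_\varepsilon\left(\varepsilon\frac{\partial\varphi}{\partial x_i}\psi^\varepsilon + \varphi\,(\partial\psi/\partial y_i)^\varepsilon\right)dxdtd\mathbb{P},
\end{equation*}
where $\psi^\varepsilon(x,t)=\psi(x/\varepsilon,t/\varepsilon)$. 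Letting $\varepsilon\to0$: the left side vanishes (extra factor $\varepsilon$), the first term on the right vanishes for the same reason, and the second converges by weak $\Sigma$-convergence of $u_\varepsilon$ to $-\iint M(u_0\,\varphi\,\overline{\partial}\psi/\partial y_i) = 0$ since $u_0$ is invariant and $M(\overline{\partial}\psi/\partial y_i)=0$. Combined with the characterization \eqref{3.2'} this tells us $v_i - \partial u_0/\partial x_i$, as a distribution in the fast variables, is a "gradient" — more precisely the family $(v_i - \partial u_0/\partial x_i)_{1\le i\le N}$ has vanishing mean and vanishing curl in the $\mathcal{B}_A$ sense, and by the definition of $\mathcal{B}^{1,p}_{\#A_y}(\mathbb{R}^N_y)$ there exists $u_1$, lying in $L^p(\Omega;L^p(Q_T;\mathcal{B}^p_{A_\tau}(\mathbb{R}_\tau;\mathcal{B}^{1,p}_{\#A_y}(\mathbb{R}^N_y))))$, with $\overline{\partial}u_1/\partial y_i = v_i - \partial u_0/\partial x_i$; recovering $u_1$ from its gradient is exactly where the reflexivity of $\mathcal{B}^{1,p}_{\#A_y}$ and the De Rham–type argument for algebras with mean value (as in \cite{NA2014, Deterhom}) are invoked.

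The main obstacle is this last step — the "de Rham" lemma asserting that a mean-zero, curl-free vector field in $(\mathcal{B}^p_{A_y}(\mathbb{R}^N_y))^N$ is the gradient of an element of $\mathcal{B}^{1,p}_{\#A_y}(\mathbb{R}^N_y)$, and doing so measurably in $(x,t,\omega)$ so that $u_1$ is a genuine random variable with values in the right Bochner space. The measurability/measurable-selection aspect is handled by separability of the spaces involved and the fact (noted in the excerpt) that $\mathcal{D}_{A_y}(\mathbb{R}^N)/I^p_{A_y}$ with mean zero is dense in $\mathcal{B}^{1,p}_{\#A_y}$, so $u_1$ can be obtained as a limit of measurable approximations; the curl-free condition itself comes from the test-function computation above run with $\psi = \partial\theta/\partial y_i$ versus $\partial\theta/\partial y_j$ for $\theta\in A^\infty$, using the commutation of the infinitesimal generators. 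Once $u_1$ is in hand, \eqref{3.2} reads $\partial u_\varepsilon/\partial x_i \to \partial u_0/\partial x_i + \overline{\partial}u_1/\partial y_i$ in $L^p(Q_T\times\Omega)$-weak $\Sigma$, which completes the proof.
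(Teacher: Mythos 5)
The paper does not prove Theorem \ref{t3.2} itself; it quotes it from \cite{SAA13}. Your overall architecture (extract weak $\Sigma$-limits of $u_{\varepsilon}$ and of each $\partial u_{\varepsilon}/\partial x_{i}$ via Theorem \ref{t3.1}, show the first limit is invariant in the fast variables, identify the second as $\nabla u_{0}+\overline{\nabla}_{y}u_{1}$ through a de Rham--type lemma) is indeed the standard route and the one followed in \cite{SAA13}. However, two of your individual steps do not work as written. For the invariance of the limit: you claim $\iint u_{\varepsilon}\varphi\,\psi_{1}^{\varepsilon}\psi_{2}^{\varepsilon}\,dxdtd\mathbb{P}\rightarrow\iint u_{0}\varphi\,M(\psi_{1}\psi_{2})\,dxdtd\mathbb{P}$ ``since $u_{\varepsilon}$ converges weakly (no oscillation)''. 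This is the product of two weak limits and is false in general (take $u_{\varepsilon}=\psi_{1}^{\varepsilon}$ with $M(\psi_{1})=0$): weak convergence of $u_{\varepsilon}$ does not preclude oscillations at scale $\varepsilon$ correlated with $\psi^{\varepsilon}$. What rules these out is the gradient bound, and the correct argument is exactly the $\varepsilon$-scaled integration by parts you display in the next paragraph: it gives $\iint u_{\varepsilon}\varphi\,(\partial\psi/\partial y_{i})^{\varepsilon}\rightarrow0$, hence $M(\tilde{u}_{0}\,\partial\psi/\partial y_{i})=0$ for all $\psi\in A_{y}^{\infty}$, i.e. $\overline{\partial}\tilde{u}_{0}/\partial y_{i}=0$, and $\tilde{u}_{0}$ is $y$-invariant by (\ref{3.2'}) (note this computation controls only the $y$-dependence; nothing in the hypotheses bounds time derivatives).

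The more serious gap is the identification of $v_{i}$. Your displayed computation pairs $\partial u_{\varepsilon}/\partial x_{i}$ with $\varepsilon\varphi\psi^{\varepsilon}$: because of the extra factor $\varepsilon$ the left-hand side tends to $0$ trivially (a bounded sequence times $O(\varepsilon)$), and each term on the right also tends to $0$, so the conclusion is $0=0$ and yields no information whatsoever about $v_{i}$; it only re-proves the invariance of $\tilde{u}_{0}$. To identify $v-\nabla u_{0}$ as a $y$-gradient you must pair $\nabla u_{\varepsilon}$ against \emph{order-one} test functions: take $\Phi\in B(\Omega)\otimes\mathcal{C}_{0}^{\infty}(Q_{T})\otimes(A^{\infty})^{N}$ with $\overline{\Div}_{y}\Phi=0$, so that $\iint\nabla u_{\varepsilon}\cdot\Phi^{\varepsilon}=-\iint u_{\varepsilon}(\Div_{x}\Phi)^{\varepsilon}$ (the singular $\varepsilon^{-1}$ term vanishes by the divergence-free condition); passing to the limit and integrating by parts in $x$ gives $\iint M\left((v-\nabla u_{0})\cdot\Phi\right)=0$ for all such $\Phi$, and since the orthogonal in $(\mathcal{B}_{A}^{p^{\prime}})^{N}$ of the $y$-divergence-free fields is the closure of $\overline{\nabla}_{y}\mathcal{D}_{A_{y}}$, this produces $u_{1}$ with $v-\nabla u_{0}=\overline{\nabla}_{y}u_{1}$. (Your antisymmetrized ``curl'' variant with $\psi=\partial\theta/\partial y_{j}$ can be made to work, but it is not the computation you actually wrote down.) The de Rham and measurable-selection issues you flag at the end are real and are appropriately deferred to \cite{NA2014,Deterhom}; with the two repairs above the argument becomes the standard one.
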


The following modified version of Theorem \ref{t3.2} will be used below.

\begin{theorem}
\label{t3.3}Assume that the hypotheses of Theorem \emph{\ref{t3.2}} are
satisfied. Assume further that $A_{y}$ is ergodic. Finally suppose further
that $p\geq2$ and that there exist a subsequence $E^{\prime}$ from $E$ and a
random variable $u_{0}\in L^{p}(\Omega;L^{p}(0,T;W_{0}^{1,p}(Q)))$ such
that, as $E^{\prime}\ni\varepsilon\rightarrow0$, 
\begin{equation}
u_{\varepsilon}\rightarrow u_{0}\text{\ in }L^{2}(Q_{T}\times\Omega )\text{.}
\label{3.3}
\end{equation}
Then there exist a subsequence of $E^{\prime}$ (not relabeled) and a $%
\mathcal{B}_{A_{\tau}}^{p}(\mathbb{R}_{\tau};\mathcal{B}_{\#A_{y}}^{1,p}(%
\mathbb{R}_{y}^{N}))$-valued stochastic process $u_{1}\in
L^{p}(\Omega;L^{p}(Q_{T};\mathcal{B}_{A_{\tau}}^{p}(\mathbb{R}_{\tau};%
\mathcal{B}_{\#A_{y}}^{1,p}(\mathbb{R}_{y}^{N}))))$ such that \emph{(\ref%
{3.2})} holds when $E^{\prime}\ni\varepsilon\rightarrow0$.
\end{theorem}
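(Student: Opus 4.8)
The plan is to deduce Theorem~\ref{t3.3} from Theorem~\ref{t3.2} by showing that the ``macroscopic'' corrector $u_1$ supplied by the general theorem must in fact be a genuine $\mathcal{B}_{\#A_y}^{1,p}$-valued process once we know, via the extra hypothesis \eqref{3.3}, that $u_\varepsilon$ converges \emph{strongly} in $L^2(Q_T\times\Omega)$ to a function $u_0$ lying in $L^p(\Omega;L^p(0,T;W_0^{1,p}(Q)))$. First I would apply Theorem~\ref{t3.2} to the sequence $(u_\varepsilon)_{\varepsilon\in E}$: along a subsequence $E''\subset E'$ we obtain a pair $(\widehat u_0,u_1)$ with $\widehat u_0\in L^p(\Omega;L^p(0,T;W_0^{1,p}(Q;I_{A}^{p}(\mathbb{R}_y^N))))$ and $u_1\in L^p(\Omega;L^p(Q_T;\mathcal{B}_{A_\tau}^p(\mathbb{R}_\tau;\mathcal{B}_{\#A_y}^{1,p}(\mathbb{R}_y^N))))$ such that $u_\varepsilon\to\widehat u_0$ weak $\Sigma$ and \eqref{3.2} holds with $\widehat u_0$ in place of $u_0$. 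The first point to nail down is that $\widehat u_0=u_0$: the weak $\Sigma$-limit is in particular the weak $L^p(Q_T\times\Omega)$-limit (Remark~\ref{r3.0}), and the strong convergence \eqref{3.3} identifies that weak limit (after noting $p\ge2$ so $L^p\hookrightarrow L^2$ convergence arguments apply on the bounded domain $Q_T\times\Omega$) as $u_0$; hence $\widehat u_0$ and $u_0$ agree. Since $A_y$ is ergodic, $I_{A}^{p}(\mathbb{R}_y^N)=\mathbb{R}$ by Remark~\ref{r3.1}, so the a priori dependence of $\widehat u_0$ on $y$ through $I_A^p$ collapses and $u_0$ is simply the $W_0^{1,p}(Q)$-valued process from the hypothesis, consistent with \eqref{3.3}.

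\textbf{Continuation.} Next I would produce the corrector $u_1$ for the given subsequence $E'$, not merely for the sub-subsequence $E''$ along which Theorem~\ref{t3.2} was invoked. The device is the uniqueness-up-to-invariants of the corrector: if $(u_0,u_1)$ and $(u_0,u_1^*)$ both satisfy \eqref{3.2} for subsequences of $E'$, then $\overline\nabla_y(u_1-u_1^*)=0$ in $L^p(Q_T\times\Omega;B_{A_y}^p(\mathbb{R}_y^N)^N)$, so they define the same element of $\mathcal{B}_{\#A_y}^{1,p}$; therefore every subsequence of $E'$ has a further subsequence along which \eqref{3.2} holds with the \emph{same} $u_1$, and a standard subsequence argument upgrades this to convergence along all of $E'$. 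The last structural remark, flagged in Remark~\ref{r3.1}(2), is that because $A_y$ is ergodic we may and do regard $u_1$ as taking values in the concrete dense subspace $B_{\#A_y}^{1,p}(\mathbb{R}_y^N)=\{u\in W^{1,p}_{loc}(\mathbb{R}_y^N):M(\nabla u)=0\}$, two elements being identified by their gradients; the statement of the theorem is phrased in terms of $\mathcal{B}_{\#A_y}^{1,p}$, which is exactly this completion, so nothing more is needed there.

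\textbf{Main obstacle.} The delicate step is the identification $\widehat u_0=u_0$ together with the passage from ``a sub-subsequence'' to ``the subsequence $E'$'': one must be careful that the weak $\Sigma$-limit's macroscopic part is genuinely pinned down by the strong $L^2$-limit and is independent of which sub-subsequence of $E'$ was extracted — this is where the hypothesis $p\ge2$ and the finiteness of $Q_T\times\Omega$ enter, ensuring the weak $\Sigma$-limit (an $L^p$ object) is compatible with the strong $L^2$-limit. Once $u_0$ is fixed independently of the extraction, the uniqueness of $\overline\nabla_y u_1$ forces $u_1$ to be common to all sub-subsequences, and the Urysohn-type subsequence principle gives convergence along the whole of $E'$, which is exactly the assertion of Theorem~\ref{t3.3}. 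The remaining verifications — that $u_1$ lies in the stated Bochner space and that \eqref{3.2} is stable under these subsequence manipulations — are routine once the identification is in hand.
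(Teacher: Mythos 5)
Your first two steps are correct and are essentially the intended derivation: the paper does not prove Theorem \ref{t3.3} itself but defers to \cite{SAA13}, and the natural argument is exactly the one you give --- invoke Theorem \ref{t3.2} along a sub-subsequence, use ergodicity of $A_{y}$ (Remark \ref{r3.1}) to reduce $I_{A}^{p}(\mathbb{R}_{y}^{N})$ to the constants so that the abstract limit $\widehat{u}_{0}$ is a genuine $(x,t,\omega)$-function, and then identify it with the prescribed $u_{0}$ by comparing the weak $L^{p}(Q_{T}\times\Omega)$-limit furnished by Remark \ref{r3.0} with the strong $L^{2}(Q_{T}\times\Omega)$-limit from (\ref{3.3}); the hypothesis $p\geq2$ together with the finiteness of the measure of $Q_{T}\times\Omega$ makes the two weak topologies compatible on bounded test functions. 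This already proves the theorem as stated, because the conclusion only asserts the existence of a \emph{further} subsequence of $E^{\prime}$ (not relabeled) along which (\ref{3.2}) holds.

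Your third step, however, is both unnecessary and unjustified. The claim that two correctors $u_{1}$ and $u_{1}^{\ast}$ arising from \emph{different} sub-subsequences of $E^{\prime}$ must satisfy $\overline{\nabla}_{y}(u_{1}-u_{1}^{\ast})=0$ does not follow from anything available at this level: Theorem \ref{t3.2} is a pure compactness statement, and distinct subsequences of $(\nabla u_{\varepsilon})$ may have distinct weak $\Sigma$-limits, hence genuinely distinct correctors, even though the macroscopic part $u_{0}$ is pinned down by (\ref{3.3}). Uniqueness of $u_{1}$ is only obtained later, in Section \ref{sec4}, where $u_{1}$ is identified through the corrector equation (\ref{5.5})--(\ref{5.6}), i.e.\ by the PDE, not by the compactness theorem. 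Consequently the Urysohn-type upgrade to convergence along all of $E^{\prime}$ is not available here --- but it is also not claimed by the theorem. Deleting the third step leaves a complete and correct proof.
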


We will also deal with the product of sequences. For that reason, we give
one further

\begin{definition}
\label{d3.2}\emph{A sequence }$(u_{\varepsilon})_{\varepsilon>0}$\emph{\ of }%
$L^{p}(Q_{T})$\emph{-valued random variables (}$1\leq p<\infty$\emph{) is
said to }strongly $\Sigma$-converge\emph{\ in }$L^{p}(Q_{T}\times\Omega )$%
\emph{\ to some }$L^{p}(Q_{T};\mathcal{B}_{A}^{p}(\mathbb{R}_{y,\tau}^{N+1}))
$\emph{-valued random variable }$u_{0}$\emph{\ if it is weakly }$\Sigma$%
\emph{-convergent towards }$u_{0}$\emph{\ and further satisfies the
following condition: }%
\begin{equation}
\left\Vert u_{\varepsilon}\right\Vert
_{L^{p}(Q_{T}\times\Omega)}\rightarrow\left\Vert u_{0}\right\Vert
_{L^{p}(Q_{T}\times\Omega ;\mathcal{B}_{AP}^{p}(\mathbb{R}%
_{y,\tau}^{N+1}))}.   \label{3.12}
\end{equation}
\emph{We denote this by }$u_{\varepsilon}\rightarrow u_{0}$\emph{\ in }$%
L^{p}(Q_{T}\times\Omega)$\emph{-strong }$\Sigma$\emph{.}
\end{definition}

\begin{remark}
\label{r3.2}\emph{Arguing as in \cite{Hom1} we can show that for any }$u\in
L^{p}(Q_{T}\times\Omega;A)$\emph{, the sequence }$(u^{\varepsilon
})_{\varepsilon>0}$\emph{\ is strongly }$\Sigma$\emph{-convergent to }$%
\varrho(u)$\emph{, where }$u^{\varepsilon}(x,t,\omega)=u(x/\varepsilon
,t/\varepsilon,\omega)$\emph{\ for }$(x,t,\omega)\in Q_{T}\times\Omega $%
\emph{.}
\end{remark}

The next result is of capital interest in the sequel (see the proof of
Proposition \ref{p4.1}). Its proof is copied on that of \cite[Theorem 6]%
{DPDE}.

\begin{theorem}
\label{t3.4}Let $1<p,q<\infty$ and $r\geq1$ be such that $1/r=1/p+1/q\leq1 $%
. Assume $(u_{\varepsilon})_{\varepsilon\in E}\subset
L^{q}(Q_{T}\times\Omega)$ is weakly $\Sigma$-convergent in $%
L^{q}(Q_{T}\times\Omega)$ to some $u_{0}\in L^{q}(Q_{T}\times\Omega;\mathcal{%
B}_{AP}^{q}(\mathbb{R}_{y,\tau}^{N+1}))$, and $(v_{\varepsilon})_{%
\varepsilon\in E}\subset L^{p}(Q_{T}\times\Omega)$ is strongly $\Sigma$%
-convergent in $L^{p}(Q_{T}\times\Omega)$ to some $v_{0}\in
L^{p}(Q_{T}\times\Omega;\mathcal{B}_{AP}^{p}(\mathbb{R}_{y,\tau}^{N+1}))$.
Then the sequence $(u_{\varepsilon}v_{\varepsilon})_{\varepsilon\in E}$ is
weakly $\Sigma$-convergent in $L^{r}(Q_{T}\times\Omega)$ to $u_{0}v_{0}$.
\end{theorem}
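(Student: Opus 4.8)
The plan is to prove Theorem~\ref{t3.4} by a density argument combined with the definitions of weak and strong $\Sigma$-convergence. First I would fix a test function $\varphi\in L^{r'}(\Omega,\mathcal{F},\mathbb{P};L^{r'}(Q_T;A))$ and a function $\psi\in L^\infty(Q_T\times\Omega;A)$ (or, more precisely, elements of $A(\mathbb{R}^{N+1})$ whose scaled copies $\psi^\varepsilon(x,t,\omega)=\psi(x/\varepsilon,t/\varepsilon,\omega)$ approximate the macroscopic test function), and study the quantity
\begin{equation*}
\iint_{Q_T\times\Omega} u_\varepsilon v_\varepsilon\,\varphi^\varepsilon\,dx\,dt\,d\mathbb{P}.
\end{equation*}
The key idea is to insert the approximation of $v_0$: since $(v_\varepsilon)$ strongly $\Sigma$-converges to $v_0$, for each $\eta>0$ we can pick $\psi\in L^p(Q_T\times\Omega;A)$ with $\|v_0-\varrho(\psi)\|_{L^p(Q_T\times\Omega;\mathcal{B}_A^p)}<\eta$, and then (using Remark~\ref{r3.2} and the strong-$\Sigma$ definition~(\ref{3.12})) the difference $v_\varepsilon-\psi^\varepsilon$ tends to zero in the relevant norm. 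Concretely, strong $\Sigma$-convergence of $(v_\varepsilon)$ to $v_0$ together with strong $\Sigma$-convergence of $(\psi^\varepsilon)$ to $\varrho(\psi)$ yields $\|v_\varepsilon-\psi^\varepsilon\|_{L^p(Q_T\times\Omega)}\to\|v_0-\varrho(\psi)\|_{L^p(Q_T\times\Omega;\mathcal B_A^p)}<\eta$ in the limit.

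Next I would split
\begin{equation*}
u_\varepsilon v_\varepsilon\varphi^\varepsilon = u_\varepsilon(v_\varepsilon-\psi^\varepsilon)\varphi^\varepsilon + u_\varepsilon\psi^\varepsilon\varphi^\varepsilon.
\end{equation*}
For the first term, by H\"older's inequality with exponents $q$, $p$, and the exponent making things integrable, together with the uniform bound on $\|u_\varepsilon\|_{L^q(Q_T\times\Omega)}$ (which follows from weak $\Sigma$-convergence in $L^q$), I get a bound by $C\eta$ uniformly in $\varepsilon$, up to the vanishing error in the norm of $v_\varepsilon-\psi^\varepsilon$. For the second term, I observe that $\psi\varphi$ belongs to a suitable space $L^{q'}(\Omega,\mathcal F,\mathbb P;L^{q'}(Q_T;A))$ (here one must check the product $\psi\cdot\varphi$ of two elements of $A$ stays in $A$ and has the right integrability — $1/q'=1/q$, and indeed $1/r' = 1-1/r = 1 - 1/p - 1/q$ combines with $\varphi\in L^{r'}$ and $\psi\in L^\infty$-type bounds to land in $L^{q'}$), so that weak $\Sigma$-convergence of $(u_\varepsilon)$ applies directly:
\begin{equation*}
\iint_{Q_T\times\Omega} u_\varepsilon\,(\psi\varphi)^\varepsilon\,dx\,dt\,d\mathbb{P}\to \iint_{Q_T\times\Omega}M\big(u_0(x,t,\cdot,\omega)\psi(\cdot)\varphi(x,t,\cdot,\omega)\big)\,dx\,dt\,d\mathbb{P}.
\end{equation*}
Then I let $\varepsilon\to 0$ first and $\eta\to 0$ afterwards, using that $M(u_0\psi\varphi)\to M(u_0 v_0\varphi)$ in $L^1(Q_T\times\Omega)$ as $\varrho(\psi)\to v_0$ (by continuity of $M$ as a bilinear-type pairing on $\mathcal B_A^q\times\mathcal B_A^p$ into $\mathcal B_A^r$, and H\"older). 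This identifies the limit of $\iint u_\varepsilon v_\varepsilon\varphi^\varepsilon$ as $\iint M(u_0 v_0\varphi)$, which is exactly weak $\Sigma$-convergence of $(u_\varepsilon v_\varepsilon)$ to $u_0 v_0$ in $L^r(Q_T\times\Omega)$ — one also notes $u_0v_0\in L^r(Q_T\times\Omega;\mathcal B_A^r)$ by H\"older on $\mathcal B_A$.

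The main obstacle I expect is the bookkeeping of exponents and the measurability/integrability of the product $u_0 v_0$ and of the various products of test functions: one needs that $\mathcal B_A^q\cdot\mathcal B_A^p\hookrightarrow\mathcal B_A^r$ (a H\"older inequality at the level of algebras with mean value, which holds since $\|fg\|_{r}\le\|f\|_q\|g\|_p$ via $M(|fg|^r)\le M(|f|^q)^{r/q}M(|g|^p)^{r/p}$), and that the mean value $M$ is continuous for this multiplication. A secondary technical point is justifying that it suffices to test against $\varphi^\varepsilon$ with $\varphi\in L^{r'}(\Omega;L^{r'}(Q_T;A))$ rather than the full admissible class — but this is handled by the density of $A$ and the standard approximation already used in~\cite{Hom1}. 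Since the structure is exactly that of \cite[Theorem~6]{DPDE}, with the only new feature being the extra probability variable $\omega$ which plays a passive role (all estimates are uniform in $\omega$ and one integrates $d\mathbb P$ at the end), the argument goes through essentially verbatim.
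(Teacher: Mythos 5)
Your proposal follows essentially the same route as the proof the paper relies on (\cite[Theorem 6]{DPDE}): approximate $v_{0}$ by an admissible $\psi$, split $u_{\varepsilon}v_{\varepsilon}\varphi^{\varepsilon}$ into $u_{\varepsilon}(v_{\varepsilon}-\psi^{\varepsilon})\varphi^{\varepsilon}+u_{\varepsilon}(\psi\varphi)^{\varepsilon}$, control the first piece by H\"{o}lder with exponents $q,p,r^{\prime}$ (your exponent bookkeeping $1/q+1/p+1/r^{\prime}=1$ and $\psi\varphi\in L^{q^{\prime}}(\Omega;L^{q^{\prime}}(Q_{T};A))$ is right), and pass to the limit in the second by weak $\Sigma$-convergence of $u_{\varepsilon}$. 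The one step you assert rather than justify is $\limsup_{\varepsilon}\Vert v_{\varepsilon}-\psi^{\varepsilon}\Vert_{L^{p}(Q_{T}\times\Omega)}\leq\Vert v_{0}-\varrho(\psi)\Vert_{p}$: under Definition \ref{d3.2} (weak $\Sigma$-convergence plus convergence of norms) this does not follow formally from the two separate strong $\Sigma$-convergences for $p\neq 2$ (for $p=2$ it follows by expanding the square; in general it requires the uniform-convexity/Clarkson-type lemma that is standard in the $\Sigma$-convergence literature), and this is precisely where the content of strong $\Sigma$-convergence enters, so it should be cited or proved explicitly.
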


\section{Homogenization results and proof of Theorem \protect\ref{t1.1}\label%
{sec4}}

\subsection{Preliminaries}

The notations are those of the preceding sections. We remark that property (%
\ref{3.1}) in Definition \ref{d3.1} still holds true for $v\in B(\Omega ;%
\mathcal{C}(\overline{Q}_{T};B_{A}^{p^{\prime },\infty }(\mathbb{R}_{y,\tau
}^{N+1})))$ where $B_{A}^{p^{\prime },\infty }(\mathbb{R}_{y,\tau
}^{N+1})=B_{A}^{p^{\prime }}(\mathbb{R}_{y,\tau }^{N+1})\cap L^{\infty }(%
\mathbb{R}_{y,\tau }^{N+1})$ and as usual, $p^{\prime }=p/(p-1)$.

With this in mind, the use of the sigma-convergence method to solve the
homogenization problem for (\ref{1}) will be possible provided that the
assumption (A4) stated in Section \ref{sec1} holds true. We recall it here
for explicitness.

\begin{itemize}
\item[(\textbf{A4})] For any $\lambda \in \mathbb{R}$ the functions $f(\cdot
,\cdot ,\lambda )$ and $g_{k}(\cdot ,\cdot ,\lambda )$ belong to $B_{A}^{2}(%
\mathbb{R}_{y,\tau }^{N+1})$; the matrix $A_{0}(x,\cdot )\in (B_{A_{y}}^{2}(%
\mathbb{R}_{y}^{N}))^{N\times N}$ for any $x\in \overline{Q}$ and $k\geq 1$.
\end{itemize}

\begin{remark}
\label{r4.1}\emph{Hypothesis (A4) includes a variety of behaviours, ranging
from the periodicity to the weak almost periodicity.}
\end{remark}

The following important result is needed in order to pass to the limit in
the stochastic term.

\begin{lemma}
\label{l4.1}Let $(u_{\varepsilon })_{\varepsilon }$ be a sequence in $%
L^{2}(Q_{T}\times \Omega )$ such that $u_{\varepsilon }\rightarrow u_{0}$ in 
$L^{2}(Q_{T}\times \Omega )$ as $\varepsilon \rightarrow 0$ where $u_{0}\in
L^{2}(Q_{T}\times \Omega )$. Then for each positive integer $k$, we have, 
\begin{equation}
g_{k}^{\varepsilon }(\cdot ,\cdot ,u_{\varepsilon })\rightarrow g_{k}(\cdot
,\cdot ,u_{0})\text{ in }L^{2}(Q_{T}\times \Omega )\text{-weak }\Sigma \text{
as }\varepsilon \rightarrow 0\text{.}  \label{4.1}
\end{equation}%
We also have 
\begin{equation}
f^{\varepsilon }(\cdot ,\cdot ,u_{\varepsilon })\rightarrow f(\cdot ,\cdot
,u_{0})\text{ in }L^{2}(Q_{T}\times \Omega )\text{-weak }\Sigma \text{ as }%
\varepsilon \rightarrow 0\text{.}  \label{4.2}
\end{equation}
\end{lemma}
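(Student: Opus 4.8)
The plan is to exploit the Lipschitz continuity of $g_k$ and $f$ in the last variable (assumptions \textbf{(A3)(iii)} and \textbf{(A2)(iii)}) to split the difference into a term that converges by the definition of weak $\Sigma$-convergence applied to a fixed nice function, plus an error term controlled by the strong $L^2$-convergence $u_\varepsilon \to u_0$. Concretely, for a test function $\varphi \in L^2(\Omega;\mathcal{C}(\overline{Q}_T; A))$ (or more generally $\varphi \in B(\Omega;\mathcal{C}(\overline{Q}_T;B_A^{2,\infty}(\mathbb{R}^{N+1}_{y,\tau})))$, using the extension of \eqref{3.1} recorded at the start of Section \ref{sec4}), I would write
\[
\iint_{Q_T\times\Omega} g_k^\varepsilon(\cdot,\cdot,u_\varepsilon)\,\varphi^\varepsilon
= \iint_{Q_T\times\Omega} g_k^\varepsilon(\cdot,\cdot,u_0)\,\varphi^\varepsilon
+ \iint_{Q_T\times\Omega}\bigl(g_k^\varepsilon(\cdot,\cdot,u_\varepsilon)-g_k^\varepsilon(\cdot,\cdot,u_0)\bigr)\varphi^\varepsilon,
\]
where $\varphi^\varepsilon(x,t,\omega)=\varphi(x,t,x/\varepsilon,t/\varepsilon,\omega)$. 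The second integral is bounded in absolute value, via \textbf{(A3)(iii)} and Cauchy--Schwarz, by $c_3\,\|u_\varepsilon-u_0\|_{L^2(Q_T\times\Omega)}\,\|\varphi^\varepsilon\|_{L^2(Q_T\times\Omega)}$, which tends to $0$ because $u_\varepsilon\to u_0$ strongly in $L^2(Q_T\times\Omega)$ and $\|\varphi^\varepsilon\|_{L^2(Q_T\times\Omega)}$ stays bounded (it converges to $\|M(|\varphi|^2)^{1/2}\|$ by the mean-value property, or one simply uses the uniform $L^\infty$ bound together with finiteness of $|Q_T\times\Omega|$).

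It therefore remains to handle the first integral, i.e. to show that $g_k^\varepsilon(\cdot,\cdot,u_0)$, with the \emph{fixed} limit $u_0$ frozen in the slot, weakly $\Sigma$-converges to $g_k(\cdot,\cdot,u_0)$. The natural route is a density/approximation argument: first treat $u_0$ that is a simple function, i.e. a finite sum $\sum_j \lambda_j\,\mathbf{1}_{E_j}(x,t,\omega)$ with $\lambda_j\in\mathbb{R}$ and $E_j$ measurable, for which $g_k^\varepsilon(\cdot,\cdot,u_0)=\sum_j \mathbf{1}_{E_j}\,g_k(x/\varepsilon,t/\varepsilon,\lambda_j)$; since $(y,\tau)\mapsto g_k(y,\tau,\lambda_j)$ lies in $B_A^2(\mathbb{R}^{N+1}_{y,\tau})$ by \textbf{(A4)}, Remark \ref{r3.2} (strong $\Sigma$-convergence of $w^\varepsilon$ for $w\in B_A^2$) gives $g_k(\cdot/\varepsilon,\cdot/\varepsilon,\lambda_j)\to \varrho(g_k(\cdot,\cdot,\lambda_j))$, whose mean value over $\mathbb{R}^{N+1}_{y,\tau}$ is exactly $\widetilde g_k(\lambda_j)$; summing against the indicators and testing against $\varphi$ yields the claim for simple $u_0$. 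Then one passes to general $u_0\in L^2(Q_T\times\Omega)$ by approximating it in $L^2$ by simple functions $u_0^{(m)}$ and using once more the Lipschitz bound \textbf{(A3)(iii)} to control $\|g_k^\varepsilon(\cdot,\cdot,u_0)-g_k^\varepsilon(\cdot,\cdot,u_0^{(m)})\|_{L^2(Q_T\times\Omega)}\le c_3\|u_0-u_0^{(m)}\|_{L^2(Q_T\times\Omega)}$ uniformly in $\varepsilon$, together with the corresponding (easier) bound $\|g_k(\cdot,\cdot,u_0)-g_k(\cdot,\cdot,u_0^{(m)})\|\le c_3\|u_0-u_0^{(m)}\|$ on the limit side; a standard $3\varepsilon$-argument closes the gap. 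The statement \eqref{4.2} for $f$ is proved verbatim, using \textbf{(A2)(iii)} in place of \textbf{(A3)(iii)} and $\widetilde f$ in place of $\widetilde g_k$.

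The main obstacle is the measurability/approximation bookkeeping in the frozen-slot step: one must make sure that $(x,t,\omega,y,\tau)\mapsto g_k(y,\tau,u_0(x,t,\omega))$ is a legitimate element of the space against which weak $\Sigma$-convergence is tested, and that simple functions are dense in $L^2(Q_T\times\Omega)$ in a way compatible with the $B(\Omega;\mathcal{C}(\overline Q_T;B_A^{p',\infty}))$ class of admissible test functions; the Carathéodory-type structure of $g_k$ (measurable in $(y,\tau)$, Lipschitz in $\lambda$) is exactly what is needed, and the uniform linear growth bounds \textbf{(A2)(iv)}, \textbf{(A3)(iv)} guarantee the requisite $L^2$-bounds so that no integrability is lost along the approximation. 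Everything else is a routine combination of Cauchy--Schwarz, the Lipschitz estimates, and Remark \ref{r3.2}.
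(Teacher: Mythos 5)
Your proposal is correct and its skeleton is exactly the paper's: the same two-term decomposition
\[
\iint g_{k}^{\varepsilon }(\cdot ,\cdot ,u_{\varepsilon })v^{\varepsilon
}=\iint g_{k}^{\varepsilon }(\cdot ,\cdot ,u_{0})v^{\varepsilon
}+\iint \bigl(g_{k}^{\varepsilon }(\cdot ,\cdot ,u_{\varepsilon
})-g_{k}^{\varepsilon }(\cdot ,\cdot ,u_{0})\bigr)v^{\varepsilon },
\]
with the second term killed by the Lipschitz hypothesis, Cauchy--Schwarz and the strong convergence $u_{\varepsilon }\rightarrow u_{0}$ in $L^{2}(Q_{T}\times \Omega )$, and the first reduced to the weak $\Sigma $-convergence of $g_{k}^{\varepsilon }(\cdot ,\cdot ,u_{0})$ for the \emph{frozen} limit $u_{0}$. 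The only point where you diverge is the density class used for that frozen-slot step: the paper approximates $u_{0}$ by elements of $B(\Omega ;\mathcal{C}(\overline{Q}_{T}))$ and invokes the fact that for such $u$ the composed function $(x,t,y,\tau ,\omega )\mapsto g_{k}(y,\tau ,u(x,t,\omega ))$ lies in the admissible test class $B(\Omega ;\mathcal{C}(\overline{Q}_{T};B_{A}^{2,\infty }(\mathbb{R}_{y,\tau }^{N+1})))$ (which tacitly uses the Lipschitz continuity of $\lambda \mapsto g_{k}(\cdot ,\cdot ,\lambda )$ into $B_{A}^{2}$), whereas you approximate by simple functions, for which the composition is a finite sum $\sum_{j}1_{E_{j}}\,g_{k}(\cdot /\varepsilon ,\cdot /\varepsilon ,\lambda _{j})$ and only the hypothesis (A4) on each fixed slice $g_{k}(\cdot ,\cdot ,\lambda _{j})\in B_{A}^{2}$ is needed. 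Your route is slightly more elementary in that it avoids verifying membership of the composed map in the vector-valued class, at the cost of a small extra measurability check on the partition $\{E_{j}\}$; both closings use the same uniform-in-$\varepsilon $ Lipschitz estimate. One cosmetic caveat: the weak $\Sigma $-limit of $g_{k}(\cdot /\varepsilon ,\cdot /\varepsilon ,\lambda _{j})$ is the class $\varrho (g_{k}(\cdot ,\cdot ,\lambda _{j}))$ itself, not the scalar $\widetilde{g}_{k}(\lambda _{j})$; the mean value only appears after pairing with the test function, so your parenthetical remark should not be read as identifying the limit with a constant.
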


\begin{proof}
Let us first check (\ref{4.1}). For $u\in B(\Omega ;\mathcal{C}(\overline{Q}%
_{T}))^{N}$, the function $(x,t,y,\tau ,\omega )\mapsto g_{k}(y,\tau
,u(x,t,\omega ))$ lies in $B(\Omega ;\mathcal{C}(\overline{Q}%
_{T};B_{A}^{2,\infty }(\mathbb{R}_{y,\tau }^{N+1})))$, so that we have $%
g_{k}^{\varepsilon }(\cdot ,\cdot ,u)\rightarrow g_{k}(\cdot ,\cdot ,u)$ in $%
L^{2}(Q_{T}\times \Omega )$-weak $\Sigma $ as $\varepsilon \rightarrow 0$.
Next, since $B(\Omega ;\mathcal{C}(\overline{Q}_{T}))$ is dense in $%
L^{2}(Q_{T}\times \Omega )$, it can be easily shown that 
\begin{equation}
g_{k}^{\varepsilon }(\cdot ,\cdot ,u_{0})\rightarrow g_{k}(\cdot ,\cdot
,u_{0})\text{ in }L^{2}(Q_{T}\times \Omega )\text{-weak }\Sigma \text{ as }%
\varepsilon \rightarrow 0.  \label{Eq2}
\end{equation}%
Now, let $v\in L^{2}(\Omega ;L^{2}(Q_{T};A))$; then 
\begin{align*}
& \iint_{Q_{T}\times \Omega }g_{k}^{\varepsilon }(\cdot ,\cdot
,u_{\varepsilon })v^{\varepsilon }dxdtd\mathbb{P}-\iint_{Q_{T}\times \Omega
}M\left( g_{k}(\cdot ,\cdot ,u_{0})v\right) dxdtd\mathbb{P} \\
& =\iint_{Q_{T}\times \Omega }(g_{k}^{\varepsilon }(\cdot ,\cdot
,u_{\varepsilon })-g_{k}^{\varepsilon }(\cdot ,\cdot ,u_{0}))v^{\varepsilon
}dxdtd\mathbb{P}+\iint_{Q_{T}\times \Omega }g_{k}^{\varepsilon }(\cdot
,\cdot ,u_{0})v^{\varepsilon }dxdtd\mathbb{P} \\
& -\iint_{Q_{T}\times \Omega }M\left( g_{k}(\cdot ,\cdot ,u_{0})v\right)
dxdtd\mathbb{P}.
\end{align*}%
The convergence result (\ref{4.1}) therefore stems from the inequality 
\begin{equation*}
\left\vert \iint_{Q_{T}\times \Omega }(g_{k}^{\varepsilon }(\cdot
,u_{\varepsilon })-g_{k}^{\varepsilon }(\cdot ,u_{0}))v^{\varepsilon }dxdtd%
\mathbb{P}\right\vert \leq C\left\Vert u_{\varepsilon }-u_{0}\right\Vert
_{L^{2}(Q_{T}\times \Omega )^{N}}\left\Vert v^{\varepsilon }\right\Vert
_{L^{2}(Q_{T}\times \Omega )}
\end{equation*}%
associated to (\ref{Eq2}). The same lines of reasoning gives (\ref{4.2}).
\end{proof}

\subsection{Passage to the limit}

Let $(u_{\varepsilon_{n}})_{n}$ be the sequence determined in the Subsection %
\ref{subsec2.2} and satisfying Eq. (\ref{ma}). In view of (\ref{ma}) the
sequence $(u_{\varepsilon_{n}})_{n}$ also satisfies the a priori estimates (%
\ref{de3}). In view of (\ref{de3}) and by a diagonal process, one can find a
subsequence of $(u_{\varepsilon_{n}})_{n}$ (not relabeled) which weakly
converges in $L^{2}(\overline{\Omega};L^{2}(0,T;H_{0}^{1}(Q)))$ to the
function $u_{0}$, so that $u_{0}\in L^{2}(\overline{\Omega}%
;L^{2}(0,T;H_{0}^{1}(Q)))$. From Theorem \ref{t3.3}, we infer the existence
of a function $u_{1}\in L^{2}(Q_{T}\times\overline{\Omega};\mathcal{B}%
_{A_{\tau}}^{2}(\mathbb{R}_{\tau};\mathcal{B}_{\#A_{y}}^{1,2}(\mathbb{R}%
_{y}^{N})))$ such that the convergence result 
\begin{equation}
\frac{\partial u_{\varepsilon_{n}}}{\partial x_{i}}\rightarrow\frac{\partial
u_{0}}{\partial x_{i}}+\frac{\overline{\partial}u_{1}}{\partial y_{i}}\text{
in }L^{2}(Q_{T}\times\overline{\Omega})\text{-weak }\Sigma\text{ }(1\leq
i\leq N)   \label{4.3}
\end{equation}
holds when $\varepsilon_{n}\rightarrow0$. Next observe that $(u_{0},u_{1})\in%
\mathbb{F}_{0}^{1}$ where 
\begin{equation*}
\mathbb{F}_{0}^{1}=L^{2}(\overline{\Omega}\times(0,T);H_{0}^{1}(Q))\times
L^{2}(Q_{T}\times\overline{\Omega};\mathcal{B}_{A_{\tau}}^{2}(\mathbb{R}%
_{\tau};\mathcal{B}_{\#A_{y}}^{1,2}(\mathbb{R}_{y}^{N}))). 
\end{equation*}
The smooth counterpart of $\mathbb{F}_{0}^{1}$ is the space 
\begin{equation*}
\mathcal{F}_{0}^{\infty}=\left( B(\overline{\Omega})\otimes\mathcal{C}%
_{0}^{\infty}(Q_{T})\right) \times\left( B(\overline{\Omega})\otimes 
\mathcal{C}_{0}^{\infty}(Q_{T})\otimes\mathcal{E}\right) 
\end{equation*}
where $\mathcal{E}=\mathcal{D}_{A_{\tau}}(\mathbb{R}_{\tau})\otimes\left( 
\mathcal{D}_{A_{y}}(\mathbb{R}_{y}^{N})/\mathbb{R}\right) $ with $\mathcal{D}%
_{A}(\mathbb{R}^{N})=\varrho(A^{\infty})$, $\varrho$ being the canonical
surjection of $B_{A}^{2}(\mathbb{R}^{N})$ onto $\mathcal{B}_{A}^{2}(\mathbb{R%
}^{N})$ defined by $\varrho(u)=u+\mathcal{N}$ (see Subsection \ref{subsec2.1}%
). It can be easily shown that $\mathcal{F}_{0}^{\infty}$ is dense in $%
\mathbb{F}_{0}^{1}$ ($\mathbb{F}_{0}^{1}$ endowed with its natural topology
making it a Hilbert space).

Now, for $\mathbf{v}=(v_{0},v_{1})\in\mathbb{F}_{0}^{1}$ we set $\mathbb{D}%
v=\nabla v_{0}+\overline{\nabla}_{y}v_{1}$ where $\nabla$ stands for the
gradient operator with respect to $x\in Q$ and $\overline{\nabla}_{y}=(%
\overline{\partial}/\partial y_{i})_{1\leq i\leq N}$. We finally recall that
any $u=\varrho(v)\in\mathcal{B}_{A}^{2}(\mathbb{R}^{N})$ has a mean value
defined by $M(u):=M(v)$.

Bearing all this in mind, we have the following result.

\begin{proposition}
\label{p4.1}The couple $\mathbf{u}=(u_{0},u_{1})\in\mathbb{F}_{0}^{1}$
determined by \emph{(\ref{de4})} and \emph{(\ref{4.3})} above solves the
following variational problem: 
\begin{equation}
\left\{ 
\begin{array}{l}
-\int_{Q_{T}\times\overline{\Omega}}u_{0}^{\prime}\psi_{0}^{\prime }dxdtd%
\overline{\mathbb{P}}+\int_{Q_{T}\times\overline{\Omega}}M(A_{0}\mathbb{D}%
\mathbf{u}\cdot\mathbb{D}\Phi)dxdtd\overline{\mathbb{P}} \\ 
=\int_{Q_{T}\times\overline{\Omega}}M(f(\cdot,\cdot,u_{0})\psi_{0}dxdtd%
\overline{\mathbb{P}}+\int_{Q_{T}\times\overline{\Omega}}M(g(\cdot
,\cdot,u_{0})\psi_{0}dxd\overline{W}d\overline{\mathbb{P}} \\ 
\text{for all }(\psi_{0},\psi_{1})\in\mathcal{F}_{0}^{\infty}\text{.}%
\end{array}
\right.   \label{4.4}
\end{equation}
\end{proposition}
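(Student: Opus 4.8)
The plan is to derive the variational equation (\ref{4.4}) by passing to the $\Sigma$-limit in the weak formulation (\ref{ma}), tested against suitably oscillating functions built from $\mathcal{F}_{0}^{\infty}$. First I would fix $(\psi_{0},\psi_{1})\in\mathcal{F}_{0}^{\infty}$, say $\psi_{0}=\eta(\omega)\varphi_{0}(x,t)$ and $\psi_{1}=\eta(\omega)\varphi_{1}(x,t)\,\varrho(\theta)$ with $\theta\in\mathcal{D}_{A_{\tau}}(\mathbb{R}_{\tau})\otimes(\mathcal{D}_{A_{y}}(\mathbb{R}_{y}^{N})/\mathbb{R})$, and form the test function
\begin{equation*}
\Phi_{\varepsilon_{n}}(x,t,\omega)=\psi_{0}(x,t,\omega)+\varepsilon_{n}\,\psi_{1}\!\left(x,t,\tfrac{x}{\varepsilon_{n}},\tfrac{t}{\varepsilon_{n}},\omega\right),
\end{equation*}
which is an admissible test function in (\ref{ma}) for a.e. $\omega$ (it belongs to $H_{0}^{1}(Q)$ in $x$, and the $\varepsilon_{n}$ prefactor guarantees $\Phi_{\varepsilon_{n}}\to\psi_{0}$ strongly while $\nabla_{x}\Phi_{\varepsilon_{n}}=\nabla\psi_{0}+\overline{\nabla}_{y}\psi_{1}(\cdot,x/\varepsilon_{n},\cdot)+\varepsilon_{n}(\cdots)$). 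I would substitute $\phi=\Phi_{\varepsilon_{n}}(t)$ into (\ref{ma}), then integrate against $d\overline{\mathbb{P}}$ and against a time test function (or exploit the a.e.-$t$ validity together with the uniform estimates (\ref{de3}) to pass to the integrated-in-time form), and examine each term as $\varepsilon_{n}\to0$.

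The term-by-term analysis proceeds as follows. The time-derivative term: since $u_{\varepsilon_{n}}^{\prime}\to u_{0}^{\prime}$ in $L^{2}(\overline{\Omega};L^{2}(0,T;H^{-1}(Q)))$ by (\ref{2.6'}) and $\Phi_{\varepsilon_{n}}\to\psi_{0}$ with $\Phi_{\varepsilon_{n}}^{\prime}\to\psi_{0}^{\prime}$, an integration by parts in $t$ gives $-\int_{Q_{T}\times\overline{\Omega}}u_{0}^{\prime}\psi_{0}^{\prime}$. The elliptic term: here I use that $A_{0}^{\varepsilon_{n}}$ admits the $\Sigma$-strong convergence $A_{0}(x,x/\varepsilon_{n})\to\varrho(A_{0})$ (Remark \ref{r3.2}, using (A4)), that $\nabla u_{\varepsilon_{n}}\to\nabla u_{0}+\overline{\nabla}_{y}u_{1}=\mathbb{D}\mathbf{u}$ weakly $\Sigma$ by (\ref{4.3}), and that $\nabla_{x}\Phi_{\varepsilon_{n}}\to\mathbb{D}\Phi$ in the appropriate strong-$\Sigma$ sense; Theorem \ref{t3.4} (product of a strongly and a weakly $\Sigma$-convergent sequence) then yields $\int_{Q_{T}\times\overline{\Omega}}M(A_{0}\mathbb{D}\mathbf{u}\cdot\mathbb{D}\Phi)$. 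The semilinear drift: by Lemma \ref{l4.1}, $f^{\varepsilon_{n}}(\cdot,\cdot,u_{\varepsilon_{n}})\to f(\cdot,\cdot,u_{0})$ weakly $\Sigma$, paired against $\Phi_{\varepsilon_{n}}\to\psi_{0}$ strongly, giving $\int M(f(\cdot,\cdot,u_{0})\psi_{0})$ (only $\psi_{0}$ survives because the $\varepsilon_{n}\psi_{1}$ part vanishes).

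The main obstacle is the stochastic term $(\int_{0}^{t}g^{\varepsilon_{n}}(\cdot,\cdot,u_{\varepsilon_{n}})\,dW^{\varepsilon_{n}},\Phi_{\varepsilon_{n}})$: one must pass to the limit simultaneously in the oscillating integrand, in the solution $u_{\varepsilon_{n}}$ appearing nonlinearly inside $g$, and in the driving noise $W^{\varepsilon_{n}}\to\overline{W}$. The strategy is to first replace $\Phi_{\varepsilon_{n}}$ by its limit $\psi_{0}$ at the cost of a term controlled by $\varepsilon_{n}$ times the $L^{2}$-norm of the stochastic integral (bounded by (\ref{de3})); then, for the resulting $(\int_{0}^{t}g^{\varepsilon_{n}}(\cdot,\cdot,u_{\varepsilon_{n}})\,dW^{\varepsilon_{n}},\psi_{0})$, use Lemma \ref{l4.1} to get $g_{k}^{\varepsilon_{n}}(\cdot,\cdot,u_{\varepsilon_{n}})\to g_{k}(\cdot,\cdot,u_{0})$ weakly $\Sigma$ (hence weakly in $L^{2}(Q_{T}\times\overline{\Omega})$ toward $M(g_{k}(\cdot,\cdot,u_{0}))=\widetilde{g}_{k}(u_{0})$ after integrating the $A$-test function down to its mean), and invoke a standard lemma on convergence of stochastic integrals (as in \cite{Bensoussan2, Sango}: joint convergence of integrands weakly in $L^{2}(\overline{\Omega}\times(0,T))$ together with $W^{\varepsilon_{n}}\to\overline{W}$ implies convergence of the Itô integrals) to identify the limit as $\int_{0}^{t}\widetilde{g}(u_{0})\,d\overline{W}$, which is exactly $\int M(g(\cdot,\cdot,u_{0})\psi_{0})\,d\overline{W}$ in the weak form. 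Collecting the four limits and recalling the density of $\mathcal{F}_{0}^{\infty}$ in $\mathbb{F}_{0}^{1}$ yields (\ref{4.4}) for all $(\psi_{0},\psi_{1})\in\mathcal{F}_{0}^{\infty}$, as claimed.
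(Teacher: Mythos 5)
Your proposal is correct and follows essentially the same route as the paper: the same oscillating test function $\Phi_{\varepsilon_n}=\psi_0+\varepsilon_n\psi(\cdot,\cdot,\cdot/\varepsilon_n,\cdot/\varepsilon_n,\cdot)$, the same term-by-term limit passage using (\ref{2.6'}) for the time-derivative term, Theorem \ref{t3.4} together with the strong $\Sigma$-convergence of $A_0^{\varepsilon_n}$ and $\nabla\Phi_{\varepsilon_n}$ for the elliptic term, Lemma \ref{l4.1} for the drift, and the Bensoussan-type argument combining Lemma \ref{l4.1} with $W^{\varepsilon_n}\to\overline{W}$ for the stochastic integral. The only cosmetic point is that $\psi_1$ is an equivalence class, so one should (as the paper does) work with a representative $\psi$ of $\psi_1$ when forming $\Phi_{\varepsilon_n}$.
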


\begin{proof}
For the sake of simplicity, we drop the index $n$ from $\varepsilon_{n}$ and
henceforth write $\varepsilon$ instead of $\varepsilon_{n}$. This being so,
we set 
\begin{equation*}
\Phi_{\varepsilon}(x,t,\omega)=\psi_{0}(x,t,\omega)+\varepsilon\psi\left(
x,t,\frac{x}{\varepsilon},\frac{t}{\varepsilon},\omega\right) \text{\ \ (}%
(x,t,\omega)\in Q_{T}\times\overline{\Omega}\text{)}
\end{equation*}
where $(\psi_{0},\psi_{1}=\varrho_{y}(\psi))\in\mathcal{F}_{0}^{\infty} $
with $\psi$ being a representative of $\psi_{1}$ and $\varrho_{y}$ the
canonical surjection of $B_{A_{y}}^{2}(\mathbb{R}_{y}^{N})$ onto $\mathcal{B}%
_{A_{y}}^{2}(\mathbb{R}_{y}^{N})$. We recall that $\psi\in B(\overline{%
\Omega })\otimes\mathcal{C}_{0}^{\infty}(Q_{T})\otimes\lbrack
A_{\tau}^{\infty }\otimes(A_{y}^{\infty}/\mathbb{R})]$, $A_{y}^{\infty}/%
\mathbb{R}=\{\phi\in A_{y}^{\infty}:M_{y}(\phi)=0\}$. Then $%
\Phi_{\varepsilon}\in B(\overline {\Omega})\otimes\mathcal{C}%
_{0}^{\infty}(Q_{T})$, and taking $\Phi _{\varepsilon}$ as a test function
in the variational formulation of (\ref{1}) , we get 
\begin{align}
& -\int_{Q_{T}\times\overline{\Omega}}u_{\varepsilon}^{\prime}\frac {%
\partial\Phi_{\varepsilon}}{\partial t}dxdtd\overline{\mathbb{P}}+\int
_{Q_{T}\times\overline{\Omega}}A_{0}^{\varepsilon}\nabla
u_{\varepsilon}\cdot\nabla\Phi_{\varepsilon}dxdtd\overline{\mathbb{P}}
\label{4.5} \\
& =\int_{Q_{T}\times\overline{\Omega}}f^{\varepsilon}(\cdot,\cdot
,u_{\varepsilon})\Phi_{\varepsilon}dxdtd\overline{\mathbb{P}}\mathbb{+}%
\int_{Q_{T}\times\overline{\Omega}}g^{\varepsilon}(\cdot,\cdot,u_{%
\varepsilon })\Phi_{\varepsilon}dxdW^{\varepsilon}d\overline{\mathbb{P}}. 
\notag
\end{align}
Our aim is to pass to the limit in (\ref{4.5}). We shall consider each term
separately. But before we proceed forward, let us first observe that: 
\begin{align*}
\frac{\partial\Phi_{\varepsilon}}{\partial t} & =\frac{\partial\psi_{0}}{%
\partial t}+\left( \frac{\partial\psi}{\partial\tau}\right) ^{\varepsilon
}+\varepsilon\left( \frac{\partial\psi}{\partial t}\right) ^{\varepsilon} \\
\nabla\Phi_{\varepsilon} & =\nabla\psi_{0}+(\nabla_{y}\psi)^{\varepsilon
}+\varepsilon(\nabla\psi)^{\varepsilon},
\end{align*}
so that (up to a subsequence $\varepsilon\rightarrow0$) 
\begin{equation}
\frac{\partial\Phi_{\varepsilon}}{\partial t}\rightarrow\frac{\partial\psi
_{0}}{\partial t}\text{ in }L^{2}(\overline{\Omega}\times(0,T);H_{0}^{1}(Q))%
\text{-weak}   \label{4.6}
\end{equation}%
\begin{equation}
\nabla\Phi_{\varepsilon}\rightarrow\nabla\psi_{0}+\nabla_{y}\psi\text{ in }%
L^{2}(Q_{T}\times\overline{\Omega})^{N}\text{-strong }\Sigma\text{ (see
Remark \ref{r3.2}).}   \label{4.7}
\end{equation}
Also 
\begin{equation}
\Phi_{\varepsilon}\rightarrow\psi_{0}\text{ in }L^{2}(Q_{T}\times \overline{%
\Omega})\text{-strong.}   \label{4.8}
\end{equation}
Next, from (\ref{4.1}) we deduce that 
\begin{equation}
g^{\varepsilon}(\cdot,\cdot,u_{\varepsilon})\rightarrow M\left( g(\cdot
,\cdot,u_{0})\right) \text{ in }L^{2}(Q_{T}\times\overline{\Omega })\text{%
-weak }\text{ as }\varepsilon\rightarrow0\text{.}   \label{de1}
\end{equation}
Combining (\ref{de1}) with (\ref{4.8}), we get 
\begin{equation}
g^{\varepsilon}(\cdot,\cdot,u_{\varepsilon})\Phi_{\varepsilon}\rightarrow
M\left( g(.,.,u_{0})\right) \psi_{0}\text{ in }L^{2}(Q_{T}\times \overline{%
\Omega})\text{-weak }   \label{deu2}
\end{equation}
Similarly, we have 
\begin{equation}
f^{\varepsilon}(\cdot,\cdot,u_{\varepsilon})\Phi_{\varepsilon}\rightarrow
M(f(.,.,u_{0}))\psi_{0}\text{ in }L^{2}(Q_{T}\times\overline{\Omega })\text{%
-weak }.   \label{deu4}
\end{equation}
Now combining (\ref{deu2}) with (2.13) and arguing as in \cite{Bensoussan1},
we get 
\begin{equation}
\int_{Q_{T}\times\overline{\Omega}}g^{\varepsilon}(\cdot,\cdot,u_{%
\varepsilon })\Phi_{\varepsilon}dxdW^{\varepsilon}d\overline{\mathbb{P}}%
\rightarrow \int_{Q_{T}\times\overline{\Omega}}M\left(
g(\cdot,\cdot,u_{0})\right) \psi_{0}dxd\overline{W}d\overline{\mathbb{P}}. 
\label{deu5}
\end{equation}

Now, coming back to (\ref{4.5}) and considering there the second term of the
left-hand side, we note that we may use $A_{0}$ as test function for the
sigma-convergence (since it belongs to $\mathcal{C}(\overline{Q}%
;B_{A_{y}}^{\infty}(\mathbb{R}_{y}^{N}))$, which is contained in $B(%
\overline{\Omega };\mathcal{C}(\overline{Q}_{T};B_{A}^{\infty}(\mathbb{R}%
^{N+1})))$ and therefore get 
\begin{equation}
\int_{Q_{T}\times\overline{\Omega}}A_{0}^{\varepsilon}\nabla u_{\varepsilon
}\cdot\nabla\Phi_{\varepsilon}dxdtd\overline{\mathbb{P}}\rightarrow\int
_{Q_{T}\times\overline{\Omega}}M(A_{0}\mathbb{D}\mathbf{u}\cdot\mathbb{D}%
\Phi)dxdtd\overline{\mathbb{P}}.   \label{4.9}
\end{equation}
Indeed, we have (\ref{4.3}) and (\ref{4.7}), so that, by Theorem \ref{t3.4}, 
\begin{equation*}
\nabla u_{\varepsilon}\cdot\nabla\Phi_{\varepsilon}\rightarrow\mathbb{D}%
\mathbf{u}\cdot\mathbb{D}\Phi\text{ in }L^{1}\left( Q_{T}\times \overline{%
\Omega}\right) \text{-weak }\Sigma\text{.}
\end{equation*}
Hence, using the convergence results (\ref{4.6}), (\ref{2.6'}) (for the
first term of the left-hand side of (\ref{4.5})) and (\ref{4.9}), (\ref{deu4}%
), (\ref{deu5}) we are led at once to (\ref{4.4}).
\end{proof}

The problem (\ref{4.4}) is called the \textit{global homogenized} problem
for (\ref{1}).

\subsection{Homogenized problem}

The goal here is to derive the problem arising from the passage to limit (as 
$\varepsilon\rightarrow0$) whose $u_{0}\in L^{2}(\overline{\Omega}%
\times(0,T);H_{0}^{1}(Q))$ is the solution. For that, we first observe that (%
\ref{4.4}) is equivalent to the system made of (\ref{5.1}) and (\ref{5.2})
below: 
\begin{equation}
\int_{Q_{T}\times\overline{\Omega}}M(A_{0}\mathbb{D}\mathbf{u}\cdot 
\overline{\nabla}_{y}\psi_{1})dxdtd\overline{\mathbb{P}}=0\text{, all }%
\psi_{1}\in B(\overline{\Omega})\otimes C_{0}^{\infty}(Q_{T})\otimes 
\mathcal{E}   \label{5.1}
\end{equation}%
\begin{equation}
\left\{ 
\begin{array}{l}
-\int_{Q_{T}\times\overline{\Omega}}u_{0}^{\prime}\psi_{0}^{\prime }dxdtd%
\overline{\mathbb{P}}+\int_{Q_{T}\times\overline{\Omega}}M(A_{0}\mathbb{D}%
\mathbf{u})\cdot\nabla\psi_{0}dxdtd\overline{\mathbb{P}} \\ 
\ \ =\int_{Q_{T}\times\overline{\Omega}}M(f(\cdot,\cdot,u_{0})\psi _{0}dxdtd%
\overline{\mathbb{P}}+\int_{Q_{T}\times\overline{\Omega}}M(g(\cdot,%
\cdot,u_{0})\psi_{0}dxd\overline{W}d\overline{\mathbb{P}} \\ 
\text{for all }\psi_{0}\in B(\overline{\Omega})\otimes\mathcal{C}%
_{0}^{\infty }(Q_{T})\text{.}%
\end{array}
\right.   \label{5.2}
\end{equation}

Let us first consider the problem (\ref{5.1}) and there choose $\psi
_{1}=\varrho (\psi )$ with 
\begin{equation}
\psi (x,t,y,\tau ,\omega )=\varphi (x,t)\phi (y)\chi (\tau )\eta (\omega )
\label{5.3}
\end{equation}%
with $\varphi \in \mathcal{C}_{0}^{\infty }(Q_{T})$, $\phi \in A_{y}^{\infty
}/\mathbb{R}$, $\chi \in A_{\tau }^{\infty }$ and $\eta \in B(\overline{%
\Omega })$. Then (\ref{5.1}) becomes 
\begin{equation}
M_{y}(A_{0}(x,\cdot )\mathbb{D}\mathbf{u}\cdot \nabla _{y}\phi )=0\text{,
all }\phi \in A_{y}^{\infty }/\mathbb{R}.  \label{5.4}
\end{equation}%
So for $\xi \in \mathbb{R}^{N}$ be freely fixed, consider the cell problem: 
\begin{equation}
\left\{ 
\begin{array}{l}
\text{Find }\pi (\xi )\in \mathcal{B}_{\#A_{y}}^{1,2}(\mathbb{R}_{y}^{N})%
\text{ such that:} \\ 
-\overline{\Div}_{y}\left( A_{0}(x,\cdot )(\xi +\overline{\nabla }_{y}\pi
(\xi ))\right) =0\text{ in }\mathbb{R}_{y}^{N}.%
\end{array}%
\right.   \label{5.5}
\end{equation}%
Instead of (\ref{5.5}) and in view of (\ref{5.4}), we may rather consider
the more convenient problem 
\begin{equation}
\left\{ 
\begin{array}{l}
\text{Find }\pi _{1}(\xi )\in B_{\#A_{y}}^{1,2}(\mathbb{R}_{y}^{N})\text{
(see Remark \ref{r3.1}-(2)) such that:} \\ 
-\Div_{y}\left( A_{0}(x,\cdot )(\xi +\nabla _{y}\pi _{1}(\xi ))\right) =0%
\text{ in }\mathbb{R}_{y}^{N}.%
\end{array}%
\right.   \label{5.5'}
\end{equation}%
Then it can be easily shown (using property (\ref{00}) and the assumption (%
\textbf{A1}) on $A_{0}$) that (\ref{5.5'}) possesses at least a solution $%
\pi _{1}(\xi )$ whose the equivalence class $\pi (\xi )$ in $\mathcal{B}%
_{\#A_{y}}^{1,2}(\mathbb{R}_{y}^{N})$ is unique and solves (\ref{5.5}). Now,
taking $\xi =\nabla u_{0}(x,t,\omega )$ (for a.e. $(x,t,\omega )\in
Q_{T}\times \Omega $) in (\ref{5.5}) and testing the resulting equation with 
$\psi $ as in (\ref{5.3}), and next integrating over $Q_{T}\times \overline{%
\Omega }$, we get (by the uniqueness of the solution to (\ref{5.5})) that 
\begin{equation}
u_{1}(x,t,y,\cdot ,\omega )=\pi (\nabla u_{0}(x,t,\omega ))(y)\text{ for
a.e. }(x,t,\omega )\in Q_{T}\times \Omega .  \label{5.6}
\end{equation}%
From which the uniqueness of $u_{1}$ defined as above and belonging to $%
L^{2}(Q_{T}\times \overline{\Omega };\mathcal{B}_{A_{\tau }^{2}}(\mathbb{R}%
_{\tau };\mathcal{B}_{\#A_{y}}^{1,2}(\mathbb{R}_{y}^{N})))$. Next for fixed $%
\xi \in \mathbb{R}^{N}$ and $r\in \mathbb{R}$ define the homogenized
coefficients as follows: 
\begin{align*}
\widetilde{A}(x)\xi & =M(A_{0}(x,\cdot )(\xi +\overline{\nabla }_{y}\pi (\xi
)))\text{, }x\in Q \\
\widetilde{f}(r)& =M(f(\cdot ,\cdot ,r))\text{ and }\widetilde{g}%
(r)=M(g(\cdot ,\cdot ,r)).
\end{align*}%
It is important to note that in view of the equality $\overline{\nabla }%
_{y}\pi (\xi )=\nabla _{y}\pi _{1}(\xi )$, if we take in the above
definition of $\widetilde{A}(x)\xi $ the special $\xi =e_{j}$ ($1\leq j\leq N
$) then we get the exact definition of $\widetilde{A}(x)$ given in Section %
\ref{sec1} (see (\ref{equ3}) therein). With this in mind, the next result
holds.

\begin{proposition}
\label{p4.2}The function $u_{0}$ solves the boundary value problem 
\begin{equation}
\left\{ 
\begin{array}{l}
du_{0}^{\prime}-\Div\left( \widetilde{A}(x)\nabla u_{0}\right) dt=\widetilde{%
f}(u_{0})dt+\widetilde{g}(u_{0})d\overline{W}\text{ in }Q_{T} \\ 
u_{0}=0\text{ on }\partial Q\times(0,T) \\ 
u_{0}(x,0)=u^{0}(x)\text{ and }u_{0}^{\prime}(x,0)=u^{1}(x)\text{ in }Q.%
\end{array}
\right.   \label{5.7}
\end{equation}
\end{proposition}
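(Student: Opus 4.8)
The plan is to feed the corrector representation (\ref{5.6}) of $u_{1}$ back into the macroscopic equation (\ref{5.2}), thereby collapsing the global homogenized problem (\ref{4.4}) to a closed problem for $u_{0}$ alone, and then to upgrade the resulting space–time variational identity to the probabilistic-strong form required by the definition of solution.

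First I would record the \emph{linearity} of the map $\xi\mapsto\pi(\xi)$. The cell problem (\ref{5.5}) is linear in the pair $(\xi,\pi(\xi))$ and, thanks to (\ref{00}) together with \textbf{(A1)}, it admits a solution whose gradient $\overline{\nabla}_{y}\pi(\xi)$ is unique with $\Vert\pi(\xi)\Vert_{\#,2}\leq C|\xi|$; hence $a_{1}\pi(\xi_{1})+a_{2}\pi(\xi_{2})$ solves (\ref{5.5}) with datum $a_{1}\xi_{1}+a_{2}\xi_{2}$, and uniqueness forces it to equal $\pi(a_{1}\xi_{1}+a_{2}\xi_{2})$. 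Consequently $x\mapsto\widetilde{A}(x)$, $\widetilde{A}(x)\xi=M(A_{0}(x,\cdot)(\xi+\overline{\nabla}_{y}\pi(\xi)))$, is a bounded linear matrix-valued map, and the bound $\Vert\pi(\xi)\Vert_{\#,2}\leq C|\xi|$ legitimizes substituting $\xi=\nabla u_{0}(x,t,\omega)$ and commuting $M$ with this substitution. Therefore, using (\ref{5.6}), $\mathbb{D}\mathbf{u}=\nabla u_{0}+\overline{\nabla}_{y}\pi(\nabla u_{0})$ and $M(A_{0}(x,\cdot)\mathbb{D}\mathbf{u})=\widetilde{A}(x)\nabla u_{0}$ for a.e. $(x,t,\omega)\in Q_{T}\times\overline{\Omega}$. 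Since $u_{0}$ does not depend on $(y,\tau)$, the very definitions of $\widetilde{f}$, $\widetilde{g}$ give $M(f(\cdot,\cdot,u_{0}))=\widetilde{f}(u_{0})$ and $M(g(\cdot,\cdot,u_{0}))=\widetilde{g}(u_{0})$ (the Lipschitz bounds \textbf{(A2)}(iv), \textbf{(A3)}(iv) and Remark \ref{r5.1} supply the measurability and integrability of the compositions). Inserting these three identities into (\ref{5.2}) yields
\begin{align*}
&-\int_{Q_{T}\times\overline{\Omega}}u_{0}^{\prime}\psi_{0}^{\prime}\,dx\,dt\,d\overline{\mathbb{P}}+\int_{Q_{T}\times\overline{\Omega}}\widetilde{A}(x)\nabla u_{0}\cdot\nabla\psi_{0}\,dx\,dt\,d\overline{\mathbb{P}}\\
&\qquad=\int_{Q_{T}\times\overline{\Omega}}\widetilde{f}(u_{0})\psi_{0}\,dx\,dt\,d\overline{\mathbb{P}}+\int_{Q_{T}\times\overline{\Omega}}\widetilde{g}(u_{0})\psi_{0}\,dx\,d\overline{W}\,d\overline{\mathbb{P}}
\end{align*}
for all $\psi_{0}\in B(\overline{\Omega})\otimes\mathcal{C}_{0}^{\infty}(Q_{T})$, which is the weak formulation of the first line of (\ref{5.7}).

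Next I would localize. Taking $\psi_{0}(x,t,\omega)=\eta(\omega)\theta(t)\phi(x)$ with $\eta\in B(\overline{\Omega})$, $\theta\in\mathcal{C}_{0}^{\infty}(0,T)$, $\phi\in\mathcal{C}_{0}^{\infty}(Q)$, writing $M_{\phi}(t)=\sum_{k\geq1}\int_{0}^{t}(\widetilde{g}(u_{0}(\tau))e_{k},\phi)\,dW^{k}(\tau)$, and using It\^{o}'s product rule applied to $\theta M_{\phi}$ (which turns the stochastic term into $-\int_{0}^{T}\theta^{\prime}M_{\phi}\,dt$), one gets — after letting $\eta$ range over a countable dense set — that for $\overline{\mathbb{P}}$-a.e. $\omega$ the map $t\mapsto(u_{0}^{\prime}(t),\phi)-M_{\phi}(t)$ has distributional $t$-derivative $(\widetilde{f}(u_{0}(t)),\phi)-(\widetilde{A}\nabla u_{0}(t),\nabla\phi)$, hence equals its value at $t=0$ plus the corresponding integral. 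The extra time-regularity carried over from (\ref{2.4})--(\ref{2.5}) (namely $u_{0}\in\mathcal{C}(0,T;H^{-1}(Q))$ and $u_{0}^{\prime}=w\in\mathcal{C}(0,T;H^{-1}(Q))$ $\overline{\mathbb{P}}$-a.s.) allows evaluation at $t=0$, and since $u_{\varepsilon_{n}}(0)=u^{0}$ and $u_{\varepsilon_{n}}^{\prime}(0)=u^{1}$ pass to the limit in $\mathcal{C}(0,T;H^{-1}(Q))$, one obtains $u_{0}(0)=u^{0}$ and the constant equal to $(u^{1},\phi)$, i.e. conditions 3)--4) of the definition of probabilistic strong solution. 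The memberships $u_{0}\in L^{2}(\overline{\Omega};L^{\infty}(0,T;H_{0}^{1}(Q)))$ and $u_{0}^{\prime}\in L^{2}(\overline{\Omega};L^{\infty}(0,T;L^{2}(Q)))$ come from the weak-star limits obtained via (\ref{de3}), the boundary condition from $u_{0}\in L^{2}(\overline{\Omega}\times(0,T);H_{0}^{1}(Q))$, and the adaptedness from the construction of $\overline{\mathcal{F}}_{t}$ together with the already-established fact that $\overline{W}$ is an $\overline{\mathcal{F}}_{t}$-cylindrical Wiener process; in view of Remark \ref{r5.1} this completes the verification that $u_{0}$ solves (\ref{5.7}).

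I expect the main obstacle to be the first step: certifying that $u_{1}$ is genuinely represented by $\pi(\nabla u_{0})$ in $L^{2}(Q_{T}\times\overline{\Omega};\mathcal{B}_{A_{\tau}}^{2}(\mathbb{R}_{\tau};\mathcal{B}_{\#A_{y}}^{1,2}(\mathbb{R}_{y}^{N})))$ and that $M$ may be pulled through the composition with $\nabla u_{0}$ — this rests precisely on the linearity of $\xi\mapsto\pi(\xi)$ and the uniform $L^{2}$-bound $\Vert\pi(\xi)\Vert_{\#,2}\leq C|\xi|$ for the corrector. After that, the identification of $\widetilde{f}$, $\widetilde{g}$ is a matter of notation, and the only remaining care is the routine stochastic-Fubini/It\^{o} manipulation used to recover the pointwise-in-time identity and the initial velocity.
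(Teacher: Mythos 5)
Your proof is correct and follows essentially the same route as the paper: substitute the corrector representation (\ref{5.6}) into (\ref{5.2}), identify $M(A_{0}\mathbb{D}\mathbf{u})=\widetilde{A}\nabla u_{0}$, $M(f(\cdot,\cdot,u_{0}))=\widetilde{f}(u_{0})$, $M(g(\cdot,\cdot,u_{0}))=\widetilde{g}(u_{0})$, and test with product functions to recover the variational formulation and the initial conditions. You merely supply details the paper leaves implicit (linearity and boundedness of $\xi\mapsto\pi(\xi)$, and the It\^{o}-product-rule step recovering the pointwise-in-time identity), all of which are sound.
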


\begin{proof}
If in (\ref{5.2}) we replace $u_{1}$ by its expression in (\ref{5.6}) and
take therein $\psi_{0}(x,t,\omega)=\varphi(x,t)\phi(\omega)$ with $\phi\in B(%
\overline{\Omega})$ and $\varphi\in\mathcal{C}_{0}^{\infty}(Q_{T})$, we get
readily the variational formulation of (\ref{5.7}). The initial conditions
are getting accordingly.
\end{proof}

\begin{proposition}
\label{p5.2}Let $u_{0}$ and $u_{0}^{\ast}$ be two solutions of \emph{(\ref%
{5.7})} on the same probabilistic system $(\overline{\Omega },\overline{%
\mathcal{F}},\overline{\mathbb{P}},\overline{W},\overline {\mathcal{F}}^{t})$
with the same initial conditions $u^{0}$ and $u^{1}$. Then $%
u_{0}=u_{0}^{\ast}$ $\overline{\mathbb{P}}$-almost surely.
\end{proposition}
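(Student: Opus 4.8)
The plan is to establish uniqueness for the homogenized stochastic wave equation \eqref{5.7} by the classical energy method for hyperbolic equations, adapted to the stochastic setting. Let $u_{0}$ and $u_{0}^{\ast}$ be two solutions on the same stochastic basis with the same initial data, and set $v=u_{0}-u_{0}^{\ast}$. Then $v$ satisfies, $\overline{\mathbb{P}}$-a.s.,
\begin{equation*}
dv^{\prime}-\Div(\widetilde{A}(x)\nabla v)\,dt=\bigl(\widetilde{f}(u_{0})-\widetilde{f}(u_{0}^{\ast})\bigr)\,dt+\bigl(\widetilde{g}(u_{0})-\widetilde{g}(u_{0}^{\ast})\bigr)\,d\overline{W}
\end{equation*}
with $v(0)=0$, $v^{\prime}(0)=0$ and $v=0$ on $\partial Q\times(0,T)$. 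First I would apply the It\^o formula to the energy functional $\mathcal{E}(t)=|v^{\prime}(t)|_{L^{2}(Q)}^{2}+(\widetilde{A}\nabla v(t),\nabla v(t))$, using $v^{\prime}$ as the natural multiplier. Since $\widetilde{A}$ is symmetric (Remark \ref{r5.1}), the deterministic cross-terms telescope: the contribution of $-\Div(\widetilde{A}\nabla v)$ paired with $v^{\prime}$ cancels against $\tfrac{d}{dt}(\widetilde{A}\nabla v,\nabla v)$, leaving only the drift and the It\^o correction term.

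The resulting identity reads, after taking expectations,
\begin{equation*}
\overline{\mathbb{E}}\,\mathcal{E}(t)=2\,\overline{\mathbb{E}}\int_{0}^{t}\bigl(\widetilde{f}(u_{0})-\widetilde{f}(u_{0}^{\ast}),v^{\prime}\bigr)\,ds+\overline{\mathbb{E}}\int_{0}^{t}\bigl|\widetilde{g}(u_{0})-\widetilde{g}(u_{0}^{\ast})\bigr|_{L_{2}(\mathcal{U},L^{2}(Q))}^{2}\,ds,
\end{equation*}
the stochastic integral having zero expectation because its integrand is square-integrable by the a priori bounds carried over to $u_{0}$. Now I would use the Lipschitz continuity of $\widetilde{f}$ and $\widetilde{g}$ (again Remark \ref{r5.1}): the first term is bounded by $C\,\overline{\mathbb{E}}\int_{0}^{t}(|v|_{L^{2}}^{2}+|v^{\prime}|_{L^{2}}^{2})\,ds$ after Cauchy--Schwarz and Young, and the second by $C\,\overline{\mathbb{E}}\int_{0}^{t}|v|_{L^{2}}^{2}\,ds$. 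Using the coercivity of $\widetilde{A}$ to bound $\alpha|\nabla v|_{L^{2}}^{2}\le(\widetilde{A}\nabla v,\nabla v)$ and the Poincar\'e inequality (valid since $v\in H_{0}^{1}(Q)$) to control $|v|_{L^{2}}^{2}$ by $|\nabla v|_{L^{2}}^{2}$, everything is dominated by $C\,\overline{\mathbb{E}}\int_{0}^{t}\mathcal{E}(s)\,ds$. Hence $\overline{\mathbb{E}}\,\mathcal{E}(t)\le C\int_{0}^{t}\overline{\mathbb{E}}\,\mathcal{E}(s)\,ds$ for a.e. $t$, and Gronwall's lemma gives $\overline{\mathbb{E}}\,\mathcal{E}(t)=0$ for all $t$, so $v^{\prime}=0$ and $\nabla v=0$ a.e., whence $v=0$ by the boundary condition; thus $u_{0}=u_{0}^{\ast}$ $\overline{\mathbb{P}}$-a.s.

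The main technical obstacle is the justification of the It\^o formula applied to the energy $\mathcal{E}(t)$: the natural test function $v^{\prime}$ only lies in $L^{2}(Q)$, not in $H_{0}^{1}(Q)$, so $(\widetilde{A}\nabla v,\nabla v^{\prime})$ is not literally meaningful and the formal computation must be legitimized. The standard remedy is to introduce a regularization — for instance, mollify in time or apply the It\^o formula in the weak (variational) framework of Gelfand triples $H_{0}^{1}(Q)\hookrightarrow L^{2}(Q)\hookrightarrow H^{-1}(Q)$ as in \cite{Chow}, treating the pair $(v,v^{\prime})$ as an $H_{0}^{1}(Q)\times L^{2}(Q)$-valued process — and then pass to the limit. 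Once this step is secured, the remaining estimates are entirely routine, relying only on the Lipschitz property of the homogenized nonlinearities, the ellipticity and symmetry of $\widetilde{A}$ recorded in Remark \ref{r5.1}, and Gronwall's inequality.
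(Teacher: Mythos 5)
Your argument is correct, but it takes a different route from the paper. The paper's proof of Proposition \ref{p5.2} is a one-line reduction: since Remark \ref{r5.1} guarantees that $\widetilde{f}$ and $\widetilde{g}$ are Lipschitz and that $\widetilde{A}$ is symmetric and uniformly elliptic, the homogenized problem (\ref{5.7}) has exactly the same structure as the original problem (\ref{1}), and the authors simply invoke the same well-posedness theorem \cite[Theorem 8.4, p. 189]{Chow} that they used for the $\varepsilon$-problem; this yields uniqueness (and existence) at once. You instead prove pathwise uniqueness directly by the energy method: It\^o's formula for $\mathcal{E}(t)=|v^{\prime}|_{L^{2}}^{2}+(\widetilde{A}\nabla v,\nabla v)$ with $v=u_{0}-u_{0}^{\ast}$, cancellation of the cross term by symmetry of $\widetilde{A}$, the Lipschitz bounds on $\widetilde{f}$ and $\widetilde{g}$, coercivity, Poincar\'e, and Gronwall. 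This is essentially an unpacking of the uniqueness half of the cited theorem, and your identification of the one nontrivial technical point --- justifying the It\^o formula for the energy when $v^{\prime}$ is only $L^{2}(Q)$-valued, handled via the Gelfand-triple variational framework or mollification --- is exactly the issue that the citation to \cite{Chow} outsources. What your approach buys is self-containedness; what the paper's buys is brevity and, simultaneously, the existence assertion needed elsewhere in Theorem \ref{t1.1}. Note that the structural verifications you rely on (Lipschitz continuity of the averaged nonlinearities, symmetry and ellipticity of $\widetilde{A}$) are precisely the content of Remark \ref{r5.1}, so both proofs rest on the same preparatory facts.
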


\begin{proof}
The functions $\tilde{f}$ and $\tilde{g}$ are Lipschitz. The matrix $%
\widetilde{A}(x)$ is symmetric and satisfies assumptions similar to those of 
$A_{0}$ (see (\textbf{A1})). We can then apply \cite[Theorem 8.4 of p. 189]%
{Chow} to prove the existence and uniqueness of strong the solution of
problem (\ref{1}).
\end{proof}

The proof of Theorem \ref{t1.1} that will follow shortly, combines the
pathwise uniqueness of of the solution of equation (\ref{1}) and the Gy\"{o}%
ngy-Krylov characterization of convergence in probability introduced in \cite%
{KRYLOV}. We recall here the precise result.

\begin{lemma}
\label{4}Let $X$ be a Polish space equipped with the Borel $\sigma $%
-algebra. A sequence of $X$-valued random variables $\{Y_{n},n\in \mathbb{N}%
\}$ converges in probability if and only if every subsequence of joint laws $%
\{\mu _{n_{k},m_{k}},k\in \mathbb{N}\}$, there exists a further subsequence
which converges weakly to a probability measure $\mu $ such that 
\begin{equation*}
\mu \left( (x,y)\in X\times X:x=y\right) =1.
\end{equation*}
\end{lemma}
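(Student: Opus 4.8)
The plan is to prove the two implications separately, with the forward direction being almost immediate and the reverse direction carried out by contradiction via the Portmanteau theorem. Throughout I fix a complete metric $d$ inducing the topology of $X$, I write $\Delta=\{(x,y)\in X\times X:x=y\}$ for the (closed) diagonal, and I recall that $\mu_{n,m}$ is the law on $X\times X$ of the pair $(Y_{n},Y_{m})$; here ``subsequence of joint laws'' means a family $(\mu_{n_{k},m_{k}})_{k}$ with $n_{k},m_{k}\to\infty$.

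For necessity, suppose $Y_{n}\to Y$ in probability for some $X$-valued random variable $Y$. Given any subsequence with $n_{k},m_{k}\to\infty$, one has $Y_{n_{k}}\to Y$ and $Y_{m_{k}}\to Y$ in probability, hence $(Y_{n_{k}},Y_{m_{k}})\to(Y,Y)$ in probability in $X\times X$ (with the sum metric, say), and therefore $\mu_{n_{k},m_{k}}$ converges weakly to $\mu:=\mathrm{Law}(Y,Y)$. Since $(Y,Y)\in\Delta$ almost surely, $\mu(\Delta)=1$, so the subsequence is already the required further subsequence.

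For sufficiency I would argue by contradiction. First I would invoke the standard fact that, $X$ being a complete metric space, $(Y_{n})$ converges in probability if and only if it is Cauchy in probability (the nontrivial direction: extract a subsequence with $\mathbb{P}(d(Y_{n_{j+1}},Y_{n_{j}})>2^{-j})<2^{-j}$, apply Borel--Cantelli and completeness to get an a.s. limit $Y$, then upgrade to convergence in probability of the whole sequence). So if $(Y_{n})$ failed to converge in probability it would not be Cauchy in probability, and there would exist $\delta>0$, $\varepsilon>0$ and sequences $n_{k},m_{k}\to\infty$ with $\mathbb{P}\big(d(Y_{n_{k}},Y_{m_{k}})\ge\delta\big)\ge\varepsilon$ for every $k$. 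Applying the hypothesis to $(\mu_{n_{k},m_{k}})_{k}$, I would obtain a further subsequence (not relabelled) along which $\mu_{n_{k},m_{k}}$ converges weakly to a probability measure $\mu$ with $\mu(\Delta)=1$. The set $F_{\delta}=\{(x,y)\in X\times X:d(x,y)\ge\delta\}$ is closed and disjoint from $\Delta$, so $\mu(F_{\delta})=0$; the Portmanteau theorem then yields
\[
\varepsilon\le\limsup_{k\to\infty}\mathbb{P}\big(d(Y_{n_{k}},Y_{m_{k}})\ge\delta\big)=\limsup_{k\to\infty}\mu_{n_{k},m_{k}}(F_{\delta})\le\mu(F_{\delta})=0,
\]
which is absurd. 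Hence $(Y_{n})$ is Cauchy in probability and, by completeness of $X$, converges in probability.

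The only genuinely substantive point is this reverse implication; within it, the two things to get right are the reduction of convergence in probability to the Cauchy property (which is where completeness of $X$ enters) and the choice of the closed, diagonal-avoiding set $F_{\delta}$ that makes the Portmanteau inequality deliver the contradiction. Everything else — the forward implication and the measure-theoretic bookkeeping with product laws — is routine.
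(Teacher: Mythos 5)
Your proof is correct, and it is the standard argument for the Gy\"{o}ngy--Krylov characterization: necessity via convergence in probability of the pairs $(Y_{n_k},Y_{m_k})$ to $(Y,Y)$, and sufficiency by reducing convergence in probability to the Cauchy-in-probability property (where completeness of the Polish space is used) and then excluding a failure of that property through the Portmanteau inequality applied to the closed set $\left\{(x,y):d(x,y)\geq \delta\right\}$, which is null for any limit law carried by the diagonal. Note that the paper itself gives no proof of this lemma; it is stated as a known result and cited from the literature, so your argument supplies a proof the paper omits, and it agrees with the classical one.
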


Let us set $X=L^{2}(0,T;L^{2}(Q))\cap C(0,T; H^{-1}(Q))\times
C(0,T;H^{-1}(Q))$; $X_{1}=\mathcal{C}(0,T;\mathcal{U}_{0})$; $X_{2}=X\times
X\times X_{1}$. For $S\in\mathcal{B}(X)$, we set $\pi^{\varepsilon}(S)=%
\mathbb{P}\left( (u_{\varepsilon},u_{\varepsilon}^{\prime})\in S\right) $.
For $S\in \mathcal{B}(X_{1})$, we set $\pi^{W}=\mathbb{P}\left( W\in
S\right) $.

Next, we define the joint probability laws: 
\begin{align}
\pi^{\varepsilon,\varepsilon^{\prime}} & =\pi^{\varepsilon}\times
\pi^{\varepsilon^{\prime}},  \notag \\
\nu^{\varepsilon,\varepsilon^{\prime}} & =\pi^{\varepsilon}\times
\pi^{\varepsilon^{\prime}}\times\pi^{W}.  \notag
\end{align}
The following tightness property is satisfied.

\begin{lemma}
\label{3}The collection $\{\nu^{\varepsilon,\varepsilon^{\prime}}:%
\varepsilon,\varepsilon^{\prime}>0\}$ is tight on $(X_{2},\mathcal{B}(X_{2}))
$.
\end{lemma}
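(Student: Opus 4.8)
The plan is to prove tightness of $\{\nu^{\varepsilon,\varepsilon'}\}$ by reducing it to tightness of each marginal factor, using the elementary fact that a finite product of tight families on Polish spaces is tight on the product space. Concretely, since $X_2 = X\times X\times X_1$ and $\nu^{\varepsilon,\varepsilon'} = \pi^{\varepsilon}\times\pi^{\varepsilon'}\times\pi^{W}$, it suffices to show that the families $\{\pi^{\varepsilon}:\varepsilon>0\}$ on $X$ and $\{\pi^{W}\}$ on $X_1$ are each tight; then, given $\delta>0$, we pick compact sets $K_1\subset X$, $K_2\subset X$, $K_3\subset X_1$ with $\pi^{\varepsilon}(K_1^c)<\delta/3$, $\pi^{\varepsilon'}(K_2^c)<\delta/3$ (using the \emph{same} $K$ for all $\varepsilon$ by tightness of the family) and $\pi^{W}(K_3^c)<\delta/3$, and then $K_1\times K_2\times K_3$ is compact in $X_2$ with $\nu^{\varepsilon,\varepsilon'}$-measure at least $1-\delta$.

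First I would recall that tightness of $\{\pi^{\varepsilon}\}$ on $X = L^{2}(0,T;L^{2}(Q))\cap\mathcal{C}(0,T;H^{-1}(Q))\times\mathcal{C}(0,T;H^{-1}(Q))$ is essentially already established in the proof of Theorem \ref{paul}: there the measures $\mu_u^{\varepsilon}$ and $\mu_{u'}^{\varepsilon}$ were shown to be tight on $S_u$ and $S_{u'}$ respectively, using the a priori estimates (\ref{ta1})--(\ref{ta3}) together with the compact embeddings $Z_1\Subset L^2(0,T;L^2(Q))$, $W^{1,2}([0,T];L^2(Q))\Subset\mathcal{C}(0,T;H^{-1}(Q))$ and $W^{\alpha,4}(0,T;L^2(Q))\Subset\mathcal{C}(0,T;H^{-1}(Q))$ for $4\alpha>1$. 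Since $\pi^{\varepsilon}$ is the law of $(u_{\varepsilon},u_{\varepsilon}')$ on $X$, it is dominated by $\mu_u^{\varepsilon}\times\mu_{u'}^{\varepsilon}$ on the product structure of $X$, hence tight; more directly, the balls $B_R^1\cap B_R^2$ (intersected with the corresponding ball in the $W^{1,2}$/$W^{\alpha,4}$ spaces controlling $u_{\varepsilon}'$) furnish the required compact sets with complement of $\pi^{\varepsilon}$-measure $\le C/R^2$, uniformly in $\varepsilon$. The tightness of $\{\pi^{W}\}$ on $\mathcal{C}(0,T;\mathcal{U}_0)$ is trivial since $\pi^{W}$ is a single measure (the law of the fixed Wiener process $W$), and a single Radon probability measure on a Polish space is automatically tight.

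The key steps, in order, are: (i) observe that $\pi^{W}$ is a fixed measure, hence tight; (ii) invoke the uniform a priori bounds of Lemma \ref{m}, which hold for $u_{\varepsilon}$ with constants independent of $\varepsilon$, to produce, for each $R>0$, compact sets in $X$ capturing all but $C/R^2$ of the mass of $\pi^{\varepsilon}$, uniformly in $\varepsilon$ — this is exactly the argument of Theorem \ref{paul}; (iii) assemble the product compact set in $X_2$ and estimate $\nu^{\varepsilon,\varepsilon'}((K_1\times K_2\times K_3)^c)\le\pi^{\varepsilon}(K_1^c)+\pi^{\varepsilon'}(K_2^c)+\pi^{W}(K_3^c)$ by subadditivity. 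I do not expect any serious obstacle here: the only point requiring a little care is to note that the estimates in Lemma \ref{m}, originally stated for $u_{\varepsilon}$, also hold for the Skorokhod representatives $u_{\varepsilon_n}$ (as recorded in (\ref{de3})), so that the same compact sets work; but for the present lemma we are still on the original probability space, so (\ref{ta1})--(\ref{ta3}) apply directly. The proof is therefore a routine repackaging of Theorem \ref{paul} together with the stability of tightness under finite products.

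\begin{proof}
Recall that a finite product of tight families of probability measures on Polish spaces is tight on the product space. Since $X_2=X\times X\times X_1$ and $\nu^{\varepsilon,\varepsilon'}=\pi^{\varepsilon}\times\pi^{\varepsilon'}\times\pi^{W}$, it suffices to establish that $\{\pi^{\varepsilon}:\varepsilon>0\}$ is tight on $X$ and that $\{\pi^{W}\}$ is tight on $X_1$.

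The family $\{\pi^{W}\}$ consists of the single probability measure given by the law of $W$ on the Polish space $X_1=\mathcal{C}(0,T;\mathcal{U}_0)$, and a single Borel probability measure on a Polish space is tight.

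For $\{\pi^{\varepsilon}\}$ we argue exactly as in the proof of Theorem \ref{paul}. Using the a priori estimates (\ref{ta1}), the balls $B_R^1\cap B_R^2$ (with $B_R^1$ the ball of radius $R$ in $Z_1$ and $B_R^2$ the ball of radius $R$ in $W^{1,2}([0,T];L^2(Q))$) are compact in $L^2(0,T;L^2(Q))\cap\mathcal{C}(0,T;H^{-1}(Q))$ and satisfy $\mu_u^{\varepsilon}((B_R^1\cap B_R^2)^c)\le C/R^2$. Likewise, using (\ref{ta2})--(\ref{ta3}) together with the compact embeddings $W^{1,2}(0,T;L^2(Q))\Subset\mathcal{C}(0,T;H^{-1}(Q))$ and $W^{\alpha,4}(0,T;L^2(Q))\Subset\mathcal{C}(0,T;H^{-1}(Q))$ ($4\alpha>1$), one obtains, for each $R>0$, a compact set $C_R\subset\mathcal{C}(0,T;H^{-1}(Q))$ with $\mu_{u'}^{\varepsilon}(C_R^c)\le C/R^2$. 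Consequently $K_R=(B_R^1\cap B_R^2)\times C_R$ is compact in $X$ and
\begin{equation*}
\pi^{\varepsilon}(K_R^c)\le\mu_u^{\varepsilon}((B_R^1\cap B_R^2)^c)+\mu_{u'}^{\varepsilon}(C_R^c)\le\frac{C}{R^2},
\end{equation*}
uniformly in $\varepsilon>0$. Hence $\{\pi^{\varepsilon}\}$ is tight on $X$.

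Finally, given $\delta>0$, choose $R>0$ with $C/R^2<\delta/2$, let $K_R\subset X$ be as above and let $K\subset X_1$ be a compact set with $\pi^{W}(K^c)<\delta/2$. Then $K_R\times K_R\times K$ is compact in $X_2$ and, by subadditivity,
\begin{equation*}
\nu^{\varepsilon,\varepsilon'}\big((K_R\times K_R\times K)^c\big)\le\pi^{\varepsilon}(K_R^c)+\pi^{\varepsilon'}(K_R^c)+\pi^{W}(K^c)<\delta
\end{equation*}
for all $\varepsilon,\varepsilon'>0$. This proves that $\{\nu^{\varepsilon,\varepsilon'}\}$ is tight on $(X_2,\mathcal{B}(X_2))$.
\end{proof}
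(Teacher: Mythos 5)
Your proof is correct and follows exactly the route the paper intends: the paper's own proof of Lemma \ref{3} consists of the single remark that it is ``similar to the one of Theorem \ref{paul}'', and your argument is precisely that reduction, carried out in detail via tightness of each marginal (from the uniform estimates of Lemma \ref{m} and the compact embeddings) together with the stability of tightness under finite products. No issues.
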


\begin{proof}
The proof is similar to the one of Theorem \ref{paul}.
\end{proof}

\subsection{Proof of Theorem \protect\ref{t1.1}}

Lemma \ref{3} implies that there exists a subsequence from $\{\nu
^{\varepsilon_{j},\varepsilon_{j}^{\prime}}\}$ still denoted by $\{\nu
^{\varepsilon_{j},\varepsilon_{j}^{\prime}}$\} which converges to a
probability measure $\nu$ on $(X_{2},\mathcal{B}(X_{2}))$. By Skorokhod's
theorem, there exists a probability space $\left( \overline{\Omega},%
\overline{\mathcal{F}},\overline{\mathbb{P}}\right) $ on which a sequence $%
\left( (u_{\varepsilon_{j}},u_{\varepsilon_{j}}^{\prime}),(u_{\varepsilon
_{j}^{\prime}},u_{\varepsilon_{j}^{\prime}}^{\prime}),W^{j}\right) $ is
defined and converges almost surely in $X_{2}$ to a couple of random
variables $\left( (u_{0},u_{0}^{\prime}),(v_{0},v_{0}^{\prime}),\overline{W}%
\right) $. Furthermore, we have 
\begin{align}
\mathcal{L}\left( (u_{\varepsilon_{j}},u_{\varepsilon_{j}}^{\prime
}),(u_{\varepsilon_{j}^{\prime}},u_{\varepsilon_{j}^{\prime}}^{%
\prime}),W^{j}\right) & =\nu^{\varepsilon_{j},\varepsilon_{j}^{\prime}}, 
\notag \\
\mathcal{L}\left( (u_{0},u_{0}^{\prime}),(v_{0},v_{0}^{\prime}),\overline {W}%
\right) & =\nu.  \notag
\end{align}
Now let 
\begin{align}
Z_{j}^{u_{\varepsilon},u_{\varepsilon}^{\prime}} & =\left( u_{\varepsilon
_{j}},u_{\varepsilon_{j}}^{\prime},W^{j}\right) \text{ and }%
Z_{j}^{u_{\varepsilon^{\prime}},u_{\varepsilon^{\prime}}^{\prime}}=\left(
u_{\varepsilon_{j}^{\prime}},u_{\varepsilon_{j}^{\prime}}^{\prime},W^{j}%
\right) ,  \notag \\
Z^{(u_{0},u_{0}^{\prime})} & =(u_{0},u_{0}^{\prime},\overline{W})\text{ and }%
Z^{(v_{0},v_{0}^{\prime})}=(v_{0},v_{0}^{\prime},\overline{W}).  \notag
\end{align}
We can infer from the above argument that $\pi^{\varepsilon_{j},\varepsilon
_{j}^{\prime}}$ converges to a measure $\pi$ such that 
\begin{equation*}
\pi(.)=\overline{\mathbb{P}}\left( \left(
(u_{0},u_{0}^{\prime}),(v_{0},v_{0}^{\prime})\right) \in.\right) . 
\end{equation*}
As above, we can show that $Z_{j}^{u_{\varepsilon},u_{\varepsilon}^{\prime}}$
and $Z_{j}^{u_{\varepsilon^{\prime}},u_{\varepsilon^{\prime}}^{\prime}}$
satisfy (\ref{ma}) and that $Z^{(u_{0},u_{0}^{\prime})}$ and $%
Z^{(v_{0},v_{0}^{\prime})}$ satisfy (\ref{1}) on the same stochastic system $%
\left( \overline{\Omega},\overline{\mathcal{F}},\overline{\mathbb{P}},%
\overline {W},\overline{\mathcal{F}}^{t}_{1}\right) $ where $\overline{%
\mathcal{F}}^{t}_{1}$ is the filtration generated by the couple $\left(
(u_{0},u_{0}^{\prime}),(v_{0},v_{0}^{\prime}),\overline{W}\right) $. Since
we have the uniqueness result above, then we conclude that $u_{0}=v_{0}$ in $%
L^{2}(Q_{T})$; $u_{0}^{\prime}=v_{0}^{\prime}\text{ in }L^{2}(0,T;H^{-1}(Q))$%
. Therefore 
\begin{align}
& \pi\left( \left( \left( (x,y),(x^{\prime},y^{\prime})\right) \in X\times
X:(x,y)=(x^{\prime},y^{\prime})\right) \right)  \notag \\
& =\overline{\mathbb{P}}\left( (u_{0},u_{0}^{\prime})=(v_{0},v_{0}^{\prime })%
\text{ in }X\right)  \notag \\
& =1.  \notag
\end{align}
This fact together with Lemma \ref{4} imply that the original sequence $%
(u_{\varepsilon},u_{\varepsilon}^{\prime})$ defined on the original
probability $(\Omega,\mathcal{F},\mathbb{P}),\mathcal{F}^{t},W$ converges in
probability to an element $(u_{0},u_{0}^{\prime})$ in the topology of $X$.
This implies that the sequence $(u_{\varepsilon})$ converges in probability
to $u_{0}$ in $L^{2}(Q_{T})$ and $u_{\varepsilon}^{\prime}$ converges in
probability to $u_{0}^{\prime}$ in $L^{2}(0,T;H^{-1}(Q))$. By the passage to
the limit as in the previous subsection, it is not difficult to show that $%
u_{0}$ is the unique strong solution of (\ref{5.7}). This ends the proof of
the theorem.

\section{Approximation of homogenized coefficients and proof of Theorem 
\protect\ref{t1.2}\label{sec6}}

We assume that the notation is as in the preceding sections. In the
preceding section, we saw that the corrector problem is posed on the whole
of $\mathbb{R}^{N}$. However, if the coefficients of our problem are locally
periodic (say the function $y\mapsto A_{0}(x,y)$ is $Y$-periodic for each
fixed $x$, $Y=(-1/2,1/2)^{N}$), then this problem reduces to another one
posed on the bounded subset $Y$ of $\mathbb{R}^{N}$, and this yields
coefficients that are computable when $x$ is fixed. Contrasting with the
periodic setting, the corrector problem in the general deterministic
framework cannot be reduced to a problem on a bounded domain. Therefore,
truncations must be considered, particularly on large domains like $B_{R}$
(or $(-R,R)^{N}$ in practice) with appropriate boundary conditions. In that
case the homogenized coefficients are captured in the asymptotic regime. We
proceed exactly as in the random setting (see \cite{BP2004}).

We make a truncation on the ball $B_{R}$ ($R>0$) and impose linear Dirichlet
boundary condition on $\partial B_{R}$: 
\begin{equation}
-\nabla_{y}\cdot\left(
A_{0}(x,\cdot)(e_{j}+\nabla_{y}\chi_{j,R}(x,\cdot))\right) =0\text{ in }%
B_{R},\ \ \chi_{j,R}(x,\cdot)\in H_{0}^{1}(B_{R}).   \label{8.3}
\end{equation}
The following result is classical and the proof is omitted.

\begin{lemma}
\label{l8.1}Problem \emph{(\ref{8.3})} possesses a unique solution $u$ that
satisfies the estimate 
\begin{equation}
\left( \frac{1}{\left\vert B_{R}\right\vert }\int_{B_{R}}\left\vert
\nabla_{y}\chi_{j,R}(x,\cdot)\right\vert ^{2}dy\right) ^{\frac{1}{2}}\leq C%
\text{ for any }R\geq1   \label{i}
\end{equation}
where $C$ is a positive constant independent of $R$.
\end{lemma}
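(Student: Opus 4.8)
The plan is to apply the Lax--Milgram lemma to the natural weak formulation of (\ref{8.3}), and then to read off the uniform bound (\ref{i}) from the energy identity obtained by testing the equation with its own solution. First I would rewrite (\ref{8.3}) in weak form: find $\chi\equiv\chi_{j,R}(x,\cdot)\in H_{0}^{1}(B_{R})$ such that
\[
\int_{B_{R}}A_{0}(x,y)\nabla_{y}\chi\cdot\nabla_{y}v\,dy=-\int_{B_{R}}A_{0}(x,y)e_{j}\cdot\nabla_{y}v\,dy\quad\text{for all }v\in H_{0}^{1}(B_{R}).
\]
The bilinear form $a(\chi,v)=\int_{B_{R}}A_{0}(x,\cdot)\nabla_{y}\chi\cdot\nabla_{y}v\,dy$ is bounded on $H_{0}^{1}(B_{R})$ because $A_{0}\in\mathcal{C}(\bar{Q};L^{\infty}(\mathbb{R}_{y}^{N}))^{N\times N}$, so that $c:=\sup_{x\in\bar{Q}}\Vert A_{0}(x,\cdot)\Vert_{L^{\infty}}<\infty$; it is coercive with constant $\alpha$ with respect to the gradient norm $\Vert\nabla_{y}\cdot\Vert_{L^{2}(B_{R})}$ by the uniform ellipticity assumption (\textbf{A1}), recalling that this gradient norm is an equivalent norm on $H_{0}^{1}(B_{R})$ by the Poincaré inequality. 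The functional $v\mapsto-\int_{B_{R}}A_{0}(x,\cdot)e_{j}\cdot\nabla_{y}v\,dy$ is clearly bounded and linear on $H_{0}^{1}(B_{R})$. Hence Lax--Milgram yields a unique $\chi$.

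For the uniform estimate I would take $v=\chi$ in the weak formulation, which gives
\[
\int_{B_{R}}A_{0}(x,\cdot)\nabla_{y}\chi\cdot\nabla_{y}\chi\,dy=-\int_{B_{R}}A_{0}(x,\cdot)e_{j}\cdot\nabla_{y}\chi\,dy.
\]
The left-hand side is bounded below by $\alpha\int_{B_{R}}|\nabla_{y}\chi|^{2}\,dy$ by (\textbf{A1}), while the right-hand side is bounded above, via the Cauchy--Schwarz inequality and the $L^{\infty}$ bound on $A_{0}$, by $c\,|B_{R}|^{1/2}\bigl(\int_{B_{R}}|\nabla_{y}\chi|^{2}\,dy\bigr)^{1/2}$. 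Cancelling one factor of $\bigl(\int_{B_{R}}|\nabla_{y}\chi|^{2}\,dy\bigr)^{1/2}$ and dividing by $|B_{R}|$ yields
\[
\left(\frac{1}{|B_{R}|}\int_{B_{R}}|\nabla_{y}\chi_{j,R}(x,\cdot)|^{2}\,dy\right)^{1/2}\leq\frac{c}{\alpha},
\]
which is exactly (\ref{i}) with $C=c/\alpha$, a constant depending only on $\alpha$ and on $\Vert A_{0}\Vert$, and in particular independent of $R$.

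There is no genuine obstacle here: the argument is entirely classical. The only point worth stressing is that the normalization by $|B_{R}|$ appearing in (\ref{i}) is precisely what makes the bound $R$-uniform — the raw Dirichlet energy $\int_{B_{R}}|\nabla_{y}\chi_{j,R}|^{2}$ itself grows like $|B_{R}|$ — and that the $R$-dependence of the Poincaré constant is irrelevant since it enters only in the existence step and not in the energy estimate. Everything above is valid for every $R>0$, so the restriction $R\geq 1$ in the statement is immaterial.
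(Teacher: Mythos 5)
Your proof is correct. The paper omits the argument entirely, stating only that the result is classical, and your Lax--Milgram plus energy-estimate derivation (testing with $\chi_{j,R}$ itself, using \textbf{(A1)} for coercivity and the $L^{\infty}$ bound on $A_{0}$ with Cauchy--Schwarz for the right-hand side, so that the normalization by $\left\vert B_{R}\right\vert$ makes the bound $R$-uniform) is precisely the standard proof the authors have in mind.
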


Let $\chi_{j,R}(x,\cdot)$ be the solution to (\ref{8.3}). As we saw in
Section \ref{sec1}, we may assume here that the matrix $A_{0}$ does not
depend on the macroscopic variable $x$, so that the functions $%
\chi_{j,R}(x,\cdot)$ are constant with respect to $x\in Q$, that is, $%
\chi_{j,R}(x,y)\equiv\chi _{j,R}(y)$. We define therefore the effective and
approximate effective matrices $\widetilde{A}$ and $\widetilde{A}_{R}$
respectively, as in (\ref{eq5}) (see Section \ref{sec1}).\ Here below, we
restate and prove Theorem \ref{t1.2}.

\begin{theorem}
\label{t8.1}The generalized sequence of matrices $\widetilde{A}_{R}$
converges, as $R\rightarrow\infty$, to the homogenized matrix $\widetilde{A}$%
.
\end{theorem}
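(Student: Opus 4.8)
The plan is to show that $\widetilde{A}_{R}\to\widetilde{A}$ by combining the uniform energy bound \eqref{i} with an extraction argument that identifies any limit of the truncated correctors with a genuine corrector on $\mathbb{R}^{N}$, and then to invoke the uniqueness (up to gradient) of the solution of the corrector problem \eqref{5.5'}. Fix $1\le j\le N$ and write $v_{R}=\chi_{j,R}$, extended by zero outside $B_{R}$, so that $v_{R}\in H^{1}(\mathbb{R}^{N})$ with $\nabla v_{R}$ supported in $B_{R}$. The quantity of interest is $|B_{R}|^{-1}\int_{B_{R}}A_{0}(y)(e_{j}+\nabla v_{R})\,dy$, whose convergence to $M(A_{0}(e_{j}+\nabla\chi_{j}))$ is what must be proved; here $\chi_{j}$ is a solution of \eqref{5.5'} with $\xi=e_{j}$.

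First I would set up the right functional framework: rescale to the unit ball by putting $w_{R}(z)=R^{-1}v_{R}(Rz)$ for $z\in B_{1}$, so that $w_{R}\in H_{0}^{1}(B_{1})$, $\nabla_{z}w_{R}(z)=(\nabla_{y}v_{R})(Rz)$, and \eqref{i} gives $\|\nabla_{z}w_{R}\|_{L^{2}(B_{1})}\le C$ uniformly in $R\ge 1$; the coefficient matrix becomes the fast-oscillating $A_{0}(Rz)$. Thus the numerical average we want is exactly $\int_{B_{1}}A_{0}(Rz)(e_{j}+\nabla_{z}w_{R}(z))\,dz$ (after dividing by $|B_{1}|$, which is absorbed in the mean). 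Now the key step: along any sequence $R_{n}\to\infty$, by weak compactness in $H_{0}^{1}(B_{1})$ (and the Rellich theorem) extract $w_{R_{n}}\rightharpoonup w$ in $H_{0}^{1}(B_{1})$; simultaneously, since $A_{0}\in(B_{A_{y}}^{2}(\mathbb{R}_{y}^{N}))^{N\times N}$, the sequence $A_{0}(R_{n}\cdot)$ is $\Sigma$-convergent (in the scaled variable) along a further subsequence, and $\nabla_{z}w_{R_{n}}$ $\Sigma$-converges to $\nabla_{z}w + \overline{\nabla}_{y}w_{1}$ for some corrector-type $w_{1}$, exactly as in Theorem \ref{t3.2} (here there is no probability space and no time variable, so the statement specializes to the purely spatial deterministic $\Sigma$-convergence of \cite{Hom1, Deterhom}). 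Passing to the $\Sigma$-limit in the weak formulation of \eqref{8.3} — test with $\varphi(z) + R_{n}^{-1}\phi(R_{n}z)$, $\varphi\in\mathcal{C}_0^\infty(B_1)$, $\phi\in A_y^\infty/\mathbb{R}$ — one finds that $w_{1}(z,\cdot)$ satisfies the cell problem \eqref{5.5'} with $\xi=e_{j}+\nabla_{z}w(z)$ for a.e. $z$, and that $w$ satisfies $-\Div_{z}(\widetilde{A}(e_{j}+\nabla_{z}w))=0$ in $B_{1}$ with $w\in H_0^1(B_1)$; by the ellipticity of $\widetilde{A}$ (Remark \ref{r5.1}) this forces $w\equiv 0$ and hence $\nabla_z w_{R_n}\to \overline{\nabla}_y w_1(\cdot)$ with $w_1(z,\cdot)=\pi(e_j)$, the solution of \eqref{5.5'}, independent of $z$. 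Therefore $\int_{B_{1}}A_{0}(R_{n}z)(e_{j}+\nabla_{z}w_{R_{n}})\,dz \to \int_{B_1} M(A_0(e_j+\overline{\nabla}_y\pi(e_j)))\,dz = |B_1|\,\widetilde{A}e_j$, by the very definition of $\Sigma$-convergence applied to the product (using Theorem \ref{t3.4} with the smooth, hence strongly $\Sigma$-convergent, factor $A_0(R_n\cdot)$ against the weakly $\Sigma$-convergent $e_j+\nabla_z w_{R_n}$). Since the limit $\widetilde{A}e_j$ does not depend on the extracted subsequence (uniqueness of the gradient of the corrector in $B_{A_y}^2(\mathbb{R}_y^N)^N$, established right after \eqref{5.5'}), the whole generalized sequence $\widetilde{A}_R e_j$ converges to $\widetilde{A}e_j$; ranging over $j=1,\dots,N$ gives $\widetilde{A}_R\to\widetilde{A}$.

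The main obstacle I anticipate is the identification of the $\Sigma$-limit $w_1$ with the corrector $\pi(e_j)$ on all of $\mathbb{R}^N$ rather than merely a local object: one has to check that the $z$-dependent family $w_1(z,\cdot)$ is actually $z$-independent (which follows because $\nabla_z w\equiv 0$, itself a consequence of the Dirichlet boundary condition $w\in H_0^1(B_1)$ together with ellipticity of $\widetilde A$) and that the limiting cell equation is posed with the correct boundary-free structure — this is precisely where the Dirichlet truncation in \eqref{8.3} is exploited, and where one must be careful that no spurious boundary-layer contribution survives in the average, which is handled because $\nabla_z w_{R_n}$ is bounded in $L^2(B_1)$ and the averaging integral is over the full ball. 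A secondary technical point is justifying the deterministic spatial version of Theorems \ref{t3.1}–\ref{t3.4} in the rescaled setting; this is routine given the tools already assembled in Section \ref{sec3}, since taking $\Omega$ a singleton and dropping the time variable reduces those statements to the classical $\Sigma$-convergence theory of \cite{Hom1, Deterhom, Casado}.
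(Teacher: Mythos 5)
Your proposal is correct and follows essentially the same route as the paper: rescale $\chi_{j,R}$ to the unit ball, use the uniform $H_0^1(B_1)$ bound to extract a weak limit $w_j$, identify the limit equation as the homogenized Dirichlet problem (forcing $w_j=0$ by ellipticity and uniqueness), and conclude by passing to the limit in the averaged flux. The only cosmetic difference is that the paper quotes the classical flux-convergence result \cite[Theorem 5.2]{Jikov} where you re-derive it from the weak--strong $\Sigma$-convergence product rule (Theorem \ref{t3.4}); both are valid, and your remark that the corrector must be localized in $z$ (via a $\psi(z)$ modulation of the test function) before being identified with $\pi(e_j)$ is the right technical caveat.
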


\begin{proof}
We set $w_{j}^{R}(y)=\frac{1}{R}\chi_{j,R}(Ry)$ for $y\in B_{1}$ and
consider the rescaled version of (\ref{8.3}) whose $w_{j}^{R}$ is solution.
It reads as 
\begin{equation}
-\nabla_{y}\cdot(A_{0}(e_{j}+\nabla_{y}w_{j}^{R}))=0\text{ in }B_{1}\text{,
\ }w_{j}^{R}=0\text{ on }\partial B_{1}.   \label{8.6}
\end{equation}
Then (\ref{8.6}) possesses a unique solution $w_{j}^{R}\in H_{0}^{1}(B_{1})$
satisfying the estimate 
\begin{equation}
\left\Vert \nabla_{y}w_{j}^{R}\right\Vert _{L^{2}(B_{1})}\leq C\ \ \ (1\leq
j\leq N)   \label{8.7}
\end{equation}
where $C>0$ is independent of $R>0$. Based on (\ref{8.7}) and for a fixed $%
1\leq j\leq N$, let $w_{j}\in H_{0}^{1}(B_{1})$ be the weak limit in $%
H_{0}^{1}(B_{1})$ of a weakly convergent subnet $(w_{j}^{R^{\prime}})_{R^{%
\prime}}$ of $(w_{j}^{R})_{R}$. Then proceeding as in the proof of Theorem %
\ref{t1.1} (see especially the proof of (\ref{4.9}) therein), it is an easy
exercise to see that $w_{j}$ solves the equation 
\begin{equation}
-\nabla_{y}\cdot(\widetilde{A}(e_{j}+\nabla_{y}w_{j}))=0\text{ in }B_{1}%
\text{,}   \label{8.9}
\end{equation}
and further thanks to \cite[Theorem 5.2]{Jikov}, the convergence result (as $%
R^{\prime}\rightarrow\infty$) 
\begin{equation}
A_{0}(e_{j}+\nabla_{y}w_{j}^{R^{\prime}})\rightarrow\widetilde{A}%
(e_{j}+\nabla_{y}w_{j})\text{ in }L^{2}(B_{1})^{N}\text{-weak}   \label{8.10}
\end{equation}
is satisfied. From the ellipticity property of $A_{0}$ and the uniqueness of
the solution to (\ref{8.9}) in $H_{0}^{1}(B_{1})$, we deduce that $w_{j}=0$,
so that $w=(w_{1},...,w_{N})=0$. We infer that the whole sequence $%
(w_{j}^{R})_{R}$ weakly converges towards $0$ in $H_{0}^{1}(B_{1})$.
Therefore, integrating (\ref{8.10}) over $B_{1}$, we readily get (when
setting $w^{R}=(w_{1}^{R},...,w_{N}^{R})$)%
\begin{equation*}
\widetilde{A}_{R}=\frac{1}{\left\vert B_{1}\right\vert }\int_{B_{1}}A_{0}(I+%
\nabla_{y}w^{R})dy\rightarrow\frac{1}{\left\vert B_{1}\right\vert }%
\int_{B_{1}}\widetilde{A}(I+\nabla_{y}w)dy=\widetilde{A}
\end{equation*}
as $R\rightarrow\infty$, $I$ being denoting the identical $N\times N$%
-matrix. This completes the proof.
\end{proof}

\begin{remark}
\label{r8.1}\emph{We also define the approximate coefficients }%
\begin{equation*}
\widetilde{f}_{R}(r)=\frac{1}{\left\vert B_{R}\right\vert }%
\int_{B_{R}}f(y,\tau,r)dyd\tau\text{ and }\widetilde{g}_{R}(r)=\frac{1}{%
\left\vert B_{R}\right\vert }\int_{B_{R}}g(y,\tau,r)dyd\tau 
\end{equation*}
\emph{where here }$B_{R}$\emph{\ stands for the open ball in }$\mathbb{R}%
^{N}\times\mathbb{R}$\emph{\ centered at the origin and of radius }$R$\emph{%
. We have trivially }%
\begin{equation*}
\widetilde{f}_{R}(r)\rightarrow\widetilde{f}(r)\text{\emph{\ and }}%
\widetilde{g}_{R}(r)\rightarrow\widetilde{g}(r)\text{\emph{\ when }}%
R\rightarrow\infty. 
\end{equation*}
\bigskip
\end{remark}

\section{Some concrete applications of Theorem \protect\ref{t1.1}\label{sec5}%
}

In the preceding section we made Assumption (A4) under which the
homogenization of (\ref{1}) has been made possible. Here we give some
physical situations that lead to (A4). They are listed in the following
problems.

\subsection{Problem 1 (Stochastic periodic homogenization)}

Here we assume that the coefficients of the stochastic problem (\ref{1}) are
periodic, that is

\begin{itemize}
\item[(A4)$_{1}$] The functions $A_{0}(x,\cdot)$, $f(\cdot,\cdot,\lambda) $
and $g(\cdot,\cdot,\lambda)$ are periodic of period $1$ in each coordinate
for each $x\in\overline{Q}$ and $\lambda\in\mathbb{R}$.
\end{itemize}

Then, setting $Y=(0,1)^{N}$ and $\mathcal{T}=(0,1)$, this leads to (\textbf{%
A4}) with $A_{y}=\mathcal{C}_{per}(Y)$, $A_{\tau}=\mathcal{C}_{per}(\mathcal{%
T})$ and so $A=\mathcal{C}_{per}(Y\times\mathcal{T})$, where $\mathcal{C}%
_{per}(Y)$ is the Banach algebra of continuous $Y$-periodic functions
defined on $\mathbb{R}^{N}$ (similar definition for $\mathcal{C}_{per}(%
\mathcal{T})$ and $\mathcal{C}_{per}(Y\times\mathcal{T})$). In this case we
have $B_{A_{y}}^{p}(\mathbb{R}^{N})=L_{per}^{p}(Y)$, $B_{A}^{p}(\mathbb{R}%
^{N+1})=L_{per}^{p}(Y\times\mathcal{T})$ for $1\leq p\leq \infty$ and $%
B_{\#A_{y}}^{1,2}(\mathbb{R}^{N})=H_{\#}^{1}(Y)=\{u\in
H_{per}^{1}(Y):\int_{Y}udy=0\}$.

In this special case, the homogenization result reads as follows.

\begin{theorem}
\label{t6.1}Under Assumptions \emph{(A1)-(A3)} and \emph{(A4)}$_{1}$ the
sequence of solutions to \emph{(\ref{1})} converges in probability to the
solution of \emph{(\ref{5.7})} with 
\begin{align*}
\widetilde{A}(x) & =\int_{Y}A_{0}(x,y)(I+\nabla_{y}\chi(x,y))dy \\
\widetilde{f}(r) & =\iint_{Y\times\mathcal{T}}f(y,\tau,r)dyd\tau\text{ and }%
\widetilde{g}(r)=\iint_{Y\times\mathcal{T}}g(y,\tau,r)dyd\tau,
\end{align*}
$\chi(x,\cdot)=(\chi_{j}(x,\cdot))_{1\leq j\leq N}\in H_{\#}^{1}(Y)^{N}$
being defined as the solution of the problem 
\begin{equation*}
\nabla_{y}\cdot(A_{0}(x,\cdot)(e_{j}+\nabla_{y}\chi_{j}(x,\cdot)))=0\text{
in }Y. 
\end{equation*}
\end{theorem}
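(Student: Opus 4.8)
The plan is to derive Theorem \ref{t6.1} as a direct specialization of Theorem \ref{t1.1} once we identify the algebras with mean value that correspond to the periodicity hypothesis (A4)$_1$. First I would observe that when the coefficients are $1$-periodic in each coordinate, the natural algebras to pick are $A_y = \mathcal{C}_{per}(Y)$ on $\mathbb{R}_y^N$ and $A_\tau = \mathcal{C}_{per}(\mathcal{T})$ on $\mathbb{R}_\tau$, with product algebra $A = A_y \odot A_\tau = \mathcal{C}_{per}(Y\times\mathcal{T})$. One then checks the standard identifications already recorded in the excerpt: $B_{A_y}^p(\mathbb{R}^N) = L_{per}^p(Y)$, $B_A^p(\mathbb{R}^{N+1}) = L_{per}^p(Y\times\mathcal{T})$, and $B_{\#A_y}^{1,2}(\mathbb{R}^N) = H_\#^1(Y) = \{u \in H_{per}^1(Y) : \int_Y u\, dy = 0\}$. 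In particular the mean value operator $M$ becomes $M_y(u) = \int_Y u(y)\,dy$ on $A_y$, $M_\tau(v) = \int_{\mathcal{T}} v(\tau)\,d\tau$ on $A_\tau$, and $M_{y,\tau} = \iint_{Y\times\mathcal{T}}$ on $A$. It is also classical that $\mathcal{C}_{per}(Y)$ is ergodic, so Theorem \ref{t3.3} applies, and (A4)$_1$ plainly implies the abstract hypothesis (A4): the maps $(y,\tau)\mapsto f(y,\tau,\lambda)$, $(y,\tau)\mapsto g_k(y,\tau,\lambda)$ lie in $L_{per}^2(Y\times\mathcal{T}) = B_A^2(\mathbb{R}_{y,\tau}^{N+1})$ and $y\mapsto A_0(x,y) \in (L_{per}^2(Y))^{N\times N} = (B_{A_y}^2(\mathbb{R}_y^N))^{N\times N}$ for each fixed $x \in \overline{Q}$.

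Next I would translate the abstract corrector problem (\ref{equ1}) under these identifications. Since $B_{\#A_y}^{1,2}(\mathbb{R}_y^N) = H_\#^1(Y)$ and the gradient operator $\overline{\nabla}_y$ reduces to the usual weak gradient $\nabla_y$ on periodic functions, the cell problem (\ref{equ1}) becomes exactly
\begin{equation*}
\nabla_y\cdot(A_0(x,\cdot)(e_j + \nabla_y\chi_j(x,\cdot))) = 0 \text{ in } Y, \qquad \chi_j(x,\cdot) \in H_\#^1(Y),
\end{equation*}
which has a unique solution in $H_\#^1(Y)$ by the Lax--Milgram theorem applied on the Hilbert space $H_\#^1(Y)$ equipped with the $\|\nabla_y\cdot\|_{L^2(Y)}$ norm, using the uniform ellipticity (A1). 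Correspondingly, the homogenized coefficients in (\ref{equ3}) read
\begin{align*}
\widetilde{A}(x) &= \int_Y A_0(x,y)(I + \nabla_y\chi(x,y))\,dy, \\
\widetilde{f}(r) &= \iint_{Y\times\mathcal{T}} f(y,\tau,r)\,dy\,d\tau, \qquad \widetilde{g}(r) = \iint_{Y\times\mathcal{T}} g(y,\tau,r)\,dy\,d\tau,
\end{align*}
which is precisely the statement of Theorem \ref{t6.1}.

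Finally, I would invoke Theorem \ref{t1.1} verbatim: all its hypotheses (A1)--(A4) are met, so the sequence $u_\varepsilon$ converges in probability in $L^2(Q_T)$ to the unique strong probabilistic solution $u_0$ of the homogenized problem (\ref{5.7}) with the coefficients just displayed. The only thing to note is that the homogenized problem in Theorem \ref{t1.1} is stated with respect to the limiting Wiener process $\overline{W}$ on the new stochastic basis produced by the Skorokhod construction, but the convergence-in-probability conclusion via the Gy\"ongy--Krylov lemma (Lemma \ref{4}) transfers everything back to the original stochastic basis, exactly as in the proof of Theorem \ref{t1.1} in Section \ref{sec4}.

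I do not anticipate a genuine obstacle here, since the result is a corollary; the only points requiring a line of care are (i) verifying that the periodic Besicovitch spaces coincide with the classical periodic Lebesgue and Sobolev spaces — which is standard and already quoted in Section \ref{sec1} — and (ii) checking ergodicity of $\mathcal{C}_{per}(Y)$, which is immediate because an invariant periodic function is translation-invariant hence constant. Once these identifications are in place, Theorem \ref{t6.1} follows by simply reading off Theorem \ref{t1.1}.
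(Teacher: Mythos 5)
Your proposal is correct and follows essentially the same route as the paper, which derives Theorem \ref{t6.1} from Theorem \ref{t1.1} by identifying the mean value in the periodic case as $M(u)=\iint_{Y\times\mathcal{T}}u(y,\tau)\,dy\,d\tau$ together with the identifications $B_{A_y}^p(\mathbb{R}^N)=L_{per}^p(Y)$ and $B_{\#A_y}^{1,2}(\mathbb{R}^N)=H_{\#}^1(Y)$ already recorded in the text. Your write-up simply makes explicit the verification of (A4) and the ergodicity of $\mathcal{C}_{per}(Y)$, which the paper leaves implicit.
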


\begin{proof}
The above result stems from the characterization of the mean value in the
periodic case: if $u\in L_{per}^{p}(Y\times\mathcal{T})$ then $M(u)=\iint
_{Y\times\mathcal{T}}u(y,\tau)dyd\tau$.
\end{proof}

\subsection{Problem 2 (Stochastic almost periodic homogenization)}

The functions $A_{0}(x,\cdot)$, $f(\cdot,\cdot,\lambda)$ and $g(\cdot
,\cdot,\lambda)$ are assumed to be Besicovitch almost periodic \cite%
{Besicovitch}. We then get (A4) with $A_{y}=AP(\mathbb{R}^{N})$, $%
A_{\tau}=AP(\mathbb{R})$ and so $A=AP(\mathbb{R}^{N+1})$, where $AP(\mathbb{R%
}^{N})$ \cite{Besicovitch, Bohr} is the algebra of Bohr continuous almost
periodic functions on $\mathbb{R}^{N}$. In this case the mean value of a
function $u\in AP(\mathbb{R}^{N})$ can be obtained as the unique constant
belonging to the closed convex hull of the family of the translates $%
(u(\cdot+a))_{a\in\mathbb{R}^{N}}$; see e.g. \cite{Jacobs}.

\subsection{Problem 3}

Let $F$ be a Banach space. Let $\mathcal{B}_{\infty}(\mathbb{R}^{d};F)$
denote the space of all continuous functions $\psi\in\mathcal{C}(\mathbb{R}%
^{d};F)$ such that $\psi(\zeta)$ has a limit in $F$ as $\left\vert
\zeta\right\vert \rightarrow\infty$. When $F=\mathbb{R}$ we set $\mathcal{B}%
_{\infty }(\mathbb{R}^{d};\mathbb{R})\equiv\mathcal{B}_{\infty}(\mathbb{R}%
^{d})$. It is known that $\mathcal{B}_{\infty}(\mathbb{R}^{d})$ is an
algebra with mean value on $\mathbb{R}^{d}$ for which the mean value of any
function $u\in\mathcal{B}_{\infty}(\mathbb{R}^{d})$ is obtained as the limit
at infinity: 
\begin{equation*}
M(u)=\lim_{\left\vert y\right\vert \rightarrow\infty}u(y)\text{; see \cite%
{Hom1}.}
\end{equation*}

With this in mind, our goal here is to homogenize Problem (\ref{1}) under
the hypothesis

\begin{itemize}
\item[(A4)$_{2}$] $A_{0}(x,\cdot)\in(L_{per}^{2}(Y))^{N\times N}$ for any $%
x\in\overline{Q}$; $f(\cdot,\cdot,\lambda),g_{k}(\cdot,\cdot,\lambda )\in%
\mathcal{B}_{\infty}(\mathbb{R}_{\tau};L_{per}^{2}(Y))$ for all $\lambda\in%
\mathbb{R}$ and $k\ge1$.
\end{itemize}

It is an easy task to see that the appropriate algebras with mean value here
are $A_{y}=\mathcal{C}_{per}(Y)$ and $A_{\tau}=\mathcal{B}_{\infty}(\mathbb{R%
}_{\tau})$, so that (A4) holds true with $A=\mathcal{B}_{\infty }(\mathbb{R}%
_{\tau})\odot\mathcal{C}_{per}(Y)=\mathcal{B}_{\infty}(\mathbb{R}_{\tau};%
\mathcal{C}_{per}(Y))$.

\subsection{Problem 4}

With the notations of Problem 3, we replace here $L_{per}^{2}(Y)$ by $%
B_{AP}^{2}(\mathbb{R}_{y}^{N})$, the space of Besicovitch almost periodic
functions. Then (A4) is verified with $A_{y}=AP(\mathbb{R}^{N})$ and $%
A_{\tau }=\mathcal{B}_{\infty}(\mathbb{R}_{\tau})$, and hence $A=\mathcal{B}%
_{\infty }(\mathbb{R}_{\tau};AP(\mathbb{R}_{y}^{N}))$.

\subsection{Problem 5 (Stochastic asymptotic almost periodic homogenization)}

Let $\mathcal{B}_{\infty,AP}(\mathbb{R}^{N})=\mathcal{B}_{\infty}(\mathbb{R}%
^{N})+AP(\mathbb{R}^{N})$. We know that $\mathcal{B}_{\infty ,AP}(\mathbb{R}%
^{N})$ is an algebra with mean value on $\mathbb{R}^{N}$ with the property
that $\mathcal{B}_{\infty,AP}(\mathbb{R}^{N})=\mathcal{C}_{0}(\mathbb{R}%
^{N})\oplus AP(\mathbb{R}^{N})$ (direct and topological sum; see e.g. \cite%
{NA}) where $\mathcal{C}_{0}(\mathbb{R}^{N})$ stands for the space of those $%
u$ in $\mathrm{BUC}(\mathbb{R}^{N})$ that vanish at infinity. Since $%
\lim_{\left\vert y\right\vert \rightarrow\infty}u(y)=0$ for any $u\in%
\mathcal{C}_{0}(\mathbb{R}^{N})$, any element in $\mathcal{B}_{\infty ,AP}(%
\mathbb{R}^{N})$ is asymptotically an almost periodic function.

Bearing all this in mind, we aim at studying the homogenization problem for (%
\ref{1}) under the hypothesis:

\begin{itemize}
\item[(A4)$_{3}$] $A_{0}(x,\cdot)\in B_{\mathcal{B}_{\infty,AP}}^{2}(\mathbb{%
R}^{N})$ and $f(\cdot,\cdot,\lambda),g(\cdot,\cdot,\lambda)\in B_{AP}^{2}(%
\mathbb{R}_{\tau};B_{\mathcal{B}_{\infty,AP}}^{2}(\mathbb{R}_{y}^{N}))$, all 
$x\in\overline{Q}$ and $\lambda\in\mathbb{R}$.
\end{itemize}

We recall that here $B_{\mathcal{B}_{\infty,AP}}^{2}(\mathbb{R}^{N})$
denotes the Besicovitch space associated to the algebra wmv $\mathcal{B}%
_{\infty ,AP}(\mathbb{R}^{N})$. Assumption (A4)$_{3}$ leads to (A4) with $%
A_{y}=\mathcal{B}_{\infty,AP}(\mathbb{R}_{y}^{N})$, $A_{\tau}=AP(\mathbb{R}%
_{\tau })$ and hence $A=AP(\mathbb{R}_{\tau};\mathcal{B}_{\infty,AP}(\mathbb{%
R}_{y}^{N}))$.

\begin{remark}
\label{r6.1}\emph{1) Some other assumptions leading to (A4) are in order; 2)
The assumption of Problem 5 includes the special case of asymptotic periodic
homogenization in which }$A_{y}=\mathcal{B}_{\infty}(\mathbb{R}^{N})+%
\mathcal{C}_{per}(Y)$\emph{\ and }$A_{\tau}=\mathcal{C}_{per}(\mathcal{T})$%
\emph{, a self-contained problem; 3) It is worth noticing that }$\mathcal{B}%
_{\infty}(\mathbb{R}_{\tau};AP(\mathbb{R}_{y}^{N}))\neq \mathcal{B}_{\infty}(%
\mathbb{R}_{y}^{N})+AP(\mathbb{R}^{N})$\emph{.}
\end{remark}

\begin{acknowledgement}
\emph{The authors are very grateful to the referees for their comments and
suggestions, which have helped to considerably improve the quality of the
manuscript. The work has been partially done when J.L. Woukeng was visiting
the Abdus Salam International Centre for Theoretical Physics under the
Associate and Federation Schemes' grants. He gratefully acknowledges the
support of the Centre. He also acknowledges the support of the Humboldt
Foundation through the Georg Forster Fellowship for Experienced Researchers.}
\end{acknowledgement}


\begin{thebibliography}{99}
\bibitem{Assyr2005} A. Abdulle, On a priori error analysis of fully discrete
heterogeneous multiscale FEM, Multiscale Model. Simul. \textbf{4} (2005)
447--459.

\bibitem{Bensoussan} A. Bensoussan, Homogenization of a class of stochastic
partial differential equations, Prog. Nonlinear Differ. Equ. Appl. \textbf{5}
(1991) 47--65.

\bibitem{Bensoussan1} A. Bensoussan, Some existence results for stochastic
partial differential equations, In Partial Differential Equations and
Applications (Trento 1990), volume \textbf{268} of Pitman Res. Notes Math.
Ser., pages 37--53. Longman Scientific and Technical, Harlow, UK, 1992.

\bibitem{Bensoussan2} A. Bensoussan, Stochastic {N}avier-{S}tokes {E}%
quations, Acta Appl. Math. \textbf{38} (1995) 267--304.

\bibitem{Besicovitch} A.S. Besicovitch, Almost periodic functions,
Cambridge, Dover Publications, 1954.

\bibitem{Bohr} H. Bohr, Almost periodic functions, Chelsea, New York, 1947.

\bibitem{Bourbaki} N. Bourbaki, Topologie g\'{e}n\'{e}rale, Chap. 1--4,
Hermann, Paris, 1971.

\bibitem{BMW} A. Bourgeat, A. Mikeli\'{c}, S. Wright, Stochastic two-scale
convergence in the mean and applications, J. Reine Angew. Math. \textbf{456}
(1994) 19--51.

\bibitem{BP2004} A. Bourgeat, A.L. Piatnitski, Approximations of effective
coefficients in stochastic homogenization, Ann. Inst. H. Poincar\'{e}
Probab. Statist. \textbf{40} (2004) 153--165.

\bibitem{Cameron} R. Cameron, W. Martin, The orthogonal development of
nonlinear functionals in series of Fourier-Hermite functionals, Ann. Math. 
\textbf{48} (1947) 385--392.

\bibitem{Casado} J. Casado Diaz and I. Gayte, The two-scale convergence
method applied to generalized Besicovitch spaces, Proc. R. Soc. Lond. A 
\textbf{458} (2002), 2925--2946.

\bibitem{Chow} P.L. Chow, Stochastic Partial Differential Equations, Chapman
and Hall/CRC, 2007.

\bibitem{DaPrato} G. Da Prato, J. Zabczyk, Stochastic Equations in Infinite
Dimensions, Cambridge University Press, 1992.

\bibitem{Glatt} A. Debussche, N. Glatt-Holtz, R. Temam, Local Martingale and
Pathwise solution for an Abstract Fluid Model, Physica D. \textbf{240}
(2011) 1123--1144.

\bibitem{DS} N. Dunford, J.T. Schwartz, Linear operators, Parts I and II,
Interscience Publishers Inc., New York, 1958, 1963.

\bibitem{Weinan2003} W. E, B. Engquist, The heterogeneous multiscale
methods, Commun. Math. Sci. \textbf{1} (2003) 87--132.

\bibitem{17} W.F. Eberlein, Abstract ergodic theorems and weak almost
periodic functions, Trans. Amer. Math. Soc. \textbf{67} (1949) 217--240.

\bibitem{Flandoli} F. Flandoli and D. Gatarek, Martingale and stationary
solutions for stochastic Navier-Stokes Equations. Probab. Theory Related
Fields, \textbf{102} (1995) 367--391.

\bibitem{Fu} H. Fu, L. Wan, J. Liu, Strong convergence in averaging
principle for stochastic hyperbolic-parabolic equations with two
times-scales, Stochastic Process. Appl. \textbf{125} (2015), no 8,
3255--3279.

\bibitem{Fu1} H. Fu, L. Wan, Strong convergence rate in averaging principle
for stochastic hyperbolic-parabolic equations with two time-scales. Arxiv.

\bibitem{Fu2} H. Fu, L. Wan, Weak order in averaging principle for
stochastic wave equations with a fast oscillation, arxiv preprint.

\bibitem{GYONGY} I. Gyongy, N. Krylov, Existence of strong solution of It%
\^{o}'s stochastic equations via approximations, Probab. Theory and Related
Fields \textbf{105} (1996) 143--158.

\bibitem{Ichihara1} N. Ichihara, Homogenization problem for partial
differential equations of Zakai type, Stochastics and Stochastics Rep. 
\textbf{76} (2004) 243--266.

\bibitem{Ichihara2} N. Ichihara, Homogenization for stochastic partial
differential equations derived from nonlinear filterings with feedback, J.
Math. Soc. Japan \textbf{57} (2005) 593--603.

\bibitem{Jacobs} K. Jacobs, Measure and Integral, Academic Press, 1978.

\bibitem{JW2015} W. J\"{a}ger, J.L. Woukeng, Homogenization of Richards'
equation in multiscale porous media, Preprint.

\bibitem{Jiang} Y. Jiang, W. Wang, Z. Feng, Spatial homogenization of
stochastic wave equations with large interaction. Canad. Math. Bull. \textbf{%
59} (2016), no.3, 542-552.

\bibitem{Jikov} V.V. Jikov, S.M. Kozlov, O.A. Oleinik, Homogenization of
differential operators and integral functionals, Springer-Verlag, Berlin,
1994.

\bibitem{KRYLOV} N.V. Krylov, B.L. Rozovskii, Stochastic evolution
equations, J. Soviet Math. \textbf{14} (1981) 1233--1277.

\bibitem{Lions} J.L. Lions, Quelques m\'{e}thodes de r\'{e}solution des probl%
\`{e}mes aux limites non lin\'{e}aires, Dunod, Paris, 1969.

\bibitem{MS2015} M. Mohammed, M. Sango, Homogenization of linear hyperbolic
stochastic partial differential equation with rapidly oscillating
coefficients: the two-scale convergence method. Asymp. Anal. \textbf{91}
(2015) 341--371.

\bibitem{MS2016-1} M. Mohammed, M. Sango, A Tartar approach to periodic
homogenization of linear hyperbolic stochastic partial differential
equation, Intern. J. Modern Phys. B \textbf{30} (2016), 1640020 (9 pages).

\bibitem{MS2016-2} M. Mohammed, M. Sango, Homogenization of Neumann problem
for hyperbolic stochastic partial differential equations in perforated
domains, Asymp. Anal. \textbf{97} (2016) 301--327.

\bibitem{Hom1} G. Nguetseng, Homogenization structures and applications I,
Z.\ Anal. Anwen. \textbf{22} (2003) 73--107.

\bibitem{CMP} G. Nguetseng, M. Sango, J.L. Woukeng, Reiterated ergodic
algebras and applications, Commun. Math. Phys \textbf{300} (2010) 835--876.

\bibitem{NgWouEJDE} G. Nguetseng, J.L. Woukeng, Deterministic homogenization
of parabolic monotone operators with time dependent coefficients, Electron.
J. Differ. Equ. \textbf{2004} (2004) 1--23.

\bibitem{pardoux} E. Pardoux, Equations aux d\'{e}riv\'{e}es partielles
stochastiques monotones, Th\`{e}se de {D}octorat, Universit\'{e} Paris-Sud,
1975.

\bibitem{revuz} D. Revuz, M. Yor, Continuous Martingales and Brownian
Motion, Volume \textbf{293}, Grundlehren der mathematischen Wissenschaften,
Springer-Verlag, Berlin, 1999.

\bibitem{Sango} M. Sango, Splitting-up scheme for nonlinear stochastic
hyperbolic equations, Forum math. \textbf{25} (2013) 931--965.

\bibitem{NA} M. Sango, N. Svanstedt, J.L. Woukeng, Generalized Besicovitch
spaces and application to deterministic homogenization, Nonlin. Anal. TMA 
\textbf{74} (2011) 351--379.

\bibitem{Simon} J. Simon, Compact sets in the space $L^{p}(0,T;B)$, Ann.
Math. Pura Appl. \textbf{146} (1987) 65--96.

\bibitem{DPDE} M. Sango, J.L. Woukeng, Stochastic $\Sigma$-convergence and
applications, Dynamics of PDE \textbf{8} (2011) 261--310.

\bibitem{WoukengArxiv} P. Razafimandimby, M. Sango, J.L. Woukeng,
Homogenization of a stochastic nonlinear reaction-diffusion equation with a
large reaction term: the almost periodic framework, J. Math. Anal. Appl. 
\textbf{394} (2012) 186--212.

\bibitem{SAA13} P. Razafimandimby, J.L. Woukeng, Homogenization of nonlinear
stochastic partial differential equations in a general ergodic environment,
Stochastic Anal. Appl. \textbf{31} (2013) 755--784.

\bibitem{Wang1} W. Wang, D. Cao, J. Duan, Effective macroscopic dynamics of
stochastic partial differential equations in perforated domains, SIAM J.
Math. Anal. \textbf{38} (2007) 1508--1527.

\bibitem{Wang2} W. Wang, J. Duan, Homogenized dynamics of stochastic partial
differential equations with dynamical boundary conditions, Commun. Math.
Phys. \textbf{275} (2007) 163--186.

\bibitem{Wiener} N. Wiener, The homogeneous chaos, Amer. J. Math. \textbf{60}
(1938) 897--936.

\bibitem{ACAP} J.L. Woukeng, Homogenization of nonlinear degenerate
non-monotone elliptic operators in domains perforated with tiny holes, Acta
Appl. Math. \textbf{112} (2010) 35--68.

\bibitem{NA2014} J.L. Woukeng, Introverted algebras with mean value and
applications, Nonlinear Anal. \textbf{99} (2014) 190--215.

\bibitem{Deterhom} J.L. Woukeng, Homogenization in algebras with mean value,
Banach J. Math. Anal. \textbf{9} (2015) 142--182.

\bibitem{Zhikov4} V.V. Zhikov, E.V. Krivenko, Homogenization of singularly
perturbed elliptic operators. Matem. Zametki, \textbf{33} (1983) 571-582
(english transl.: Math. Notes, \textbf{33} (1983) 294--300).

\bibitem{Zhdanok} A.I. Zhdanok, Gamma-compactification of measurable spaces,
Siberian Math. J. \textbf{44} (2003) 463--476.
\end{thebibliography}
\end{document}